\newtheorem{theorem}{Theorem}[section]
\newtheorem{lemma}[theorem]{Lemma}
\newtheorem{proposition}[theorem]{Proposition}
\theoremstyle{remark}
\newtheorem{remark}[theorem]{\it \bf{Remark}\/}
\numberwithin{equation}{section}
\def\section{\@startsection{section}{1}%
  \z@{1.5\linespacing\@plus\linespacing}{.5\linespacing}%
  {\normalfont\bfseries\large\centering}}
\newcommand{\be}{\begin{equation}}
\newcommand{\ee}{\end{equation}}
\newcommand{\bea}{\begin{eqnarray}}
\newcommand{\eea}{\end{eqnarray}}
\newcommand{\bee}{\begin{eqnarray*}}
\newcommand{\eee}{\end{eqnarray*}}
\def\div{{\rm div \;}}
\def\pa{\partial}
\def\RR{\mathbb{R}}
\def\ds{\displaystyle}
\def\supess{\mathop{\operator@font Sup\,ess}}
\def\curl{{\rm curl \;}}
\def\div{{\rm div \;}}
\def\RR{\mathbb{R}}
\def\ds{\displaystyle}
\def\bar#1{{\overline #1}}
\def\R2+{\RR ^2_+}
\def\pa{\partial}
\def\lim{\mathop{\rm lim}}
\def\et{\tilde{\e}}
\def\te{\tilde{\e}}
\def\pa{\partial}
\def\et{\tilde{\e}}
\def\pa{\partial}
\def\matchal{\mathcal}
\begin{document}
\title[]{On small traveling waves to the mass critical fractional NLS}
\author[I. Naumkin]{Ivan Naumkin}
\address{Laboratoire J.A. Dieudonn\'e, Universit\'e de la C\^ote d'Azur, France}
\email{ivan.naumkin@unice.fr}\author[P. Rapha\"el]{Pierre Rapha\"el}
\address{Laboratoire J.A. Dieudonn\'e, Universit\'e de la C\^ote d'Azur, France}
\email{praphael@unice.fr}
\begin{abstract}
We consider the mass critical fractional (NLS)
$$
i\partial_{t}u-\left\vert D\right\vert ^{s}u+u\left\vert u\right\vert
^{2s}=0,\text{ \ }x\in\mathbb{R},\text{ \ }1<s<2.%
$$
We show the existence of travelling waves for all mass below the ground state mass, and give a complete description of the associated profiles in the small mass limit. We therefore recover a situation similar to the one discovered in \cite{GLPR} for the critical case $s=1$, but with a completely different asymptotic profile when the mass vanishes.
\end{abstract}

\maketitle
\newcommand{\ag }{\widetilde{\mathbf A}}
\newcommand{\av }{\overline{\mathbf A}}
\newcommand{\ap }{\mathbf A}
\newcommand{\ere}{ {\mathbb R}}
\newcommand{\ZETA}{{\mathbb Z}}
\newcommand{\ese}{{\mathbb S}}
\newcommand{\CE}{{\mathbb C}}
\newcommand{\ls}{L^2(\ese^{n-1})}
\newcommand {\erc}{\mathcal R}
\def\p2{\mathcal A_{\Phi,2\pi}(B)}
\def\0p2{\mathcal A_{\Phi,2\pi}(0)}
\def\sp2{\mathcal A_{\Phi,2\pi,\hbox{\rm SR}}(B)}
\def\beq{\begin{equation}}
\def\ene{\end{equation}}
\def \ds {\displaystyle}
\newcommand{\bull}{\hfill $\Box$}
\def\qed{\ifhmode\unskip\nobreak\fi\ifmmode\ifinner
\else\hskip5pt\fi\fi\hbox{\hskip5pt\vrule width4pt height6pt
depth1.5pt\hskip1pt}}
\def\v{\mathbf v}
\def\hu{\hat{\mathbf u}}
\def\hv{\hat{\mathbf v}}
\def\hn{\hat{\mathbf n}}
\def\hw{\hat{\mathbf w}}
\def\curl{\, \hbox{ \rm curl}\,}
\def\mo{\mathbf p}
\def\ta{\tilde{A}}
\def \tf{\tilde{\phi}}
\def \ts{\tilde{\psi}}
\def\tB{\tilde{B}}
\def\tvf{\tilde{\varphi}}
\def\et{e^{-iz H_1}}
\def\te{\tilde{\varepsilon}}
\def\td{\tilde{\delta}}
\def\div{\,\hbox{\rm div}\,}
\def\xin{x_{\rm in}}
\def\xout{ x_{\rm out}}
\def\+out{x^{\rm out}}

\section{Introduction.}

\subsection{Setting of the problem} We study the existence and uniqueness of traveling waves for the mass critical
fractional nonlinear Schr\"{o}dinger equation
\begin{equation}
\label{1tw}
i\partial_{t}u-\left\vert D\right\vert ^{s}u+u\left\vert u\right\vert
^{2s}=0,\text{ \ }x\in\mathbb{R},\text{ \ }1<s<2,
\end{equation}
where
\[
D:=-i\partial_{x},\text{ \ \ }\mathcal{F}\left(  \left\vert D\right\vert
u\right)  =\left\vert \xi\right\vert \left(  \mathcal{F}u\right)  \left(
\xi\right),
\]
which appear as limiting models in various physical situations, see  \cite{lenzman}, \cite{Frohlich} and references therein. The existence of the ground state solution $u(t,x)=Q_s(x)e^{it}$, $Q_s>0$, of \eqref{1tw} follows from classical variational arguments, and uniqueness is a deep result \cite{lenzman}. The ground state produces a sharp criterion of global existence: for all $u_0\in H^{\frac s2}(\mathbb R)$ with $\|u_0\|_{L^2}<\|Q_s\|_{L^2}$, $\exists !u\in \mathcal C^0([0,\infty),H^{\frac s2})$ solution to \eqref{1tw}, \cite{gerard}, and there exists a minimal blow up solution at the threshold $\|u_0\|_{L^2}=\|Q_s\|_{L^2}$, \cite{KLR}. For $\|u_0\|_{L^2}<\|Q\|_{L^2}$, the behaviour of solutions dramatically depends on $s$:\\

\noindent\underline{$s=2$}: In the local case, all solutions below the ground state scatter, which is an elementary consequence of the pseudo-conformal symmetry for $u_0\in H^1\cap \{xu \in L^2\}$ and follows from the Kenig-Merle route map \cite{KM} coupled to Morawetz like estimates, \cite{dodson}. Note that in this case, the travelling wave family generated by the ground state solitary wave is explicitly given by the action of Galilean symmetry $$u(t,x)=Q_{s,\beta}(x-2\beta t)e^{i|\beta|^2t}, \ \ Q_{s,\beta}(y)=e^{i\beta \cdot y} Q_s(y), \ \ \beta\in \mathbb R$$ and hence the explicit degeneracy
\be
\label{degenemass}
\forall \beta\in \Bbb R, \ \ \|Q_{s,\beta}\|_{L^2}=\|Q_s\|_{L^2}.
\ee
\noindent\underline{$s=1$}: The half wave case is treated in details in \cite{GLPR} where the existence of travelling $u(t,x)=Q_{s,\beta}(x-\beta t)$ is proved with $$\lim_{\beta\uparrow 1}\|Q_{s,\beta}\|_{L^2}=0.$$ In fact, a unique branch is constructed with the asymptotic behaviour
\be
\label{cnkenoeneo}
Q_{s,\beta}(x)=\left(Q_++o_{\beta \to 1}(1)\right)\left(\frac{x}{1-\beta}\right)\ \ \mbox{as}\ \ \beta\uparrow 1
\ee where $Q_+$ is the ground state to the limiting non-local Szeg\H{o} equation $$\pa_tu=\Pi_+(u|u|^2), \ \ \widehat{\Pi_+u}={\bf 1}_{\xi>0}\hat{u},$$ see \cite{gerard,poc}. The existence of the critical speed $\beta=1$ is the starting point for the construction of two bubbles interacting solitons with growing Sobolev norms, \cite{poc}, \cite{GLPR}.

\subsection{Statement of the result} Our aim in this paper is to investigate the case $1<s<2$ and show that a third scenario occurs. Let us consider the travelling wave problem. We define
\[
u_{\beta}\left(  t,x\right)  :=e^{it\gamma}Q_{\beta}\left(  x-2\beta t\right)
,
\]
where $\gamma=\gamma\left(  \beta\right)  \in\mathbb{R}$. Then, in order to
solve (\ref{1tw}), $Q_{\beta}$ must satisfy the equation%
\begin{equation}
\left[  \left\vert D\right\vert ^{s}-2\beta D+\gamma\right]  Q_{\beta
}=Q_{\beta}\left\vert Q_{\beta}\right\vert ^{2s}\label{tw3}%
\end{equation}
which is the Euler-Lagrange equation corresponding to the minimization problem%
\begin{equation}
E_{\beta}\left(  N\right)  :=\inf\left\{  \mathcal{E}_{\beta}\left(  u\right)
:u\in H^{s/2}\left(  \mathbb{R}\right)  \text{ and }\int_{\mathbb{R}%
}\left\vert u\right\vert ^{2}=N\right\}  \label{tw4}%
\end{equation}
where%
\begin{equation}
\mathcal{E}_{\beta}\left(  u\right)  :=\frac{1}{2}\int_{\mathbb{R}}\bar
{u}\left\vert D\right\vert ^{s}u-\beta\int_{\mathbb{R}}\bar{u}Du-\frac
{1}{2s+2}\int_{\mathbb{R}}\left\vert u\right\vert ^{2s+2}.\label{tw107}%
\end{equation}
For $\beta=0$, there holds the Gagliardo--Nirenberg-type inequality (see \cite{Weinstein} and
\cite{Weinstein1})
\begin{equation}
\int_{\mathbb{R}}\left\vert u\right\vert ^{2s+2}\leq C_{s}\left(
\int_{\mathbb{R}}\bar{u}\left\vert D\right\vert ^{s}u\right)  \left(
\int_{\mathbb{R}}\left\vert u\right\vert ^{2}\right)  ^{s},\label{tw5}%
\end{equation}
where $C_{s}=\frac{s+1}{\left\langle Q,Q\right\rangle ^{s}},$ and the ground state $Q_s$
is the optimizer of (\ref{tw5}). We may now state the main results of this paper where to ease notations, we note $Q_s=Q$.\\

\noindent\underline{Existence of a minimizer} for all mass below the ground state.
\begin{theorem}[Existence of a minimizer]
\label{Th1}Let $1<s<2$ and $\beta\geq0$. Then, for all $0<N<\|Q\|_{L^2} ,$ the problem (\ref{tw4}) has a minimizer $Q_{\beta,N}\in
H^{s/2}\left(  \mathbb{R}\right)  $ with $\|Q_{\beta,N}\|_{L^2}^2 =N$ that satisfies (\ref{tw3})  for some $\gamma
=\gamma\left(  \beta,N\right)  \in\mathbb{R}$.
\end{theorem}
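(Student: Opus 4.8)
The plan is to run the concentration–compactness method of Lions directly on the variational problem \eqref{tw4}. First I would check that the functional $\mathcal E_\beta$ is well-defined and bounded below on the constraint set $\{u\in H^{s/2}:\int|u|^2=N\}$ when $0<N<\|Q\|_{L^2}^2$. The kinetic term $\frac12\int\bar u|D|^su$ controls $\|u\|_{\dot H^{s/2}}^2$; the drift term $-\beta\int\bar uDu$ is handled by $|\int\bar uDu|\le \int|\xi||\hat u|^2\le \frac{\ep}{2}\int|\xi|^s|\hat u|^2+C_\ep N$ since $1<s<2$ makes $|\xi|$ subordinate to $1+|\xi|^s$, so it can be absorbed into the kinetic term up to a constant depending on $N$; and the nonlinear term is controlled by the Gagliardo–Nirenberg inequality \eqref{tw5}, which gives $\frac{1}{2s+2}\int|u|^{2s+2}\le \frac{1}{2(s+1)}C_s(\int\bar u|D|^su)(\int|u|^2)^s=\frac{1}{2}\left(\frac{N}{\|Q\|_{L^2}^2}\right)^s\int\bar u|D|^su$. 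Since $N/\|Q\|_{L^2}^2<1$, the coefficient in front of $\int\bar u|D|^su$ is strictly less than $\frac12$, and after also absorbing the drift term one obtains $\mathcal E_\beta(u)\ge c(N)\int\bar u|D|^su-C(N)$ with $c(N)>0$; hence $E_\beta(N)>-\infty$ and any minimizing sequence is bounded in $H^{s/2}$.

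Next I would apply the concentration–compactness lemma to the sequence of densities $|u_n|^2$ for a minimizing sequence $(u_n)$, normalized so that $\int|u_n|^2=N$. This yields the usual trichotomy: vanishing, dichotomy, or compactness (up to translation). Vanishing is excluded because if $\sup_y\int_{|x-y|\le R}|u_n|^2\to 0$ then $\|u_n\|_{L^{2s+2}}\to 0$ (a standard consequence of the vanishing lemma together with the uniform $H^{s/2}$ bound), which forces $\liminf \mathcal E_\beta(u_n)\ge 0$; but one must check that $E_\beta(N)<0$, which follows by a scaling/test-function computation: plugging $u=\mu^{1/2}v(\mu\cdot)$ for suitable $v$ and small $\mu$ makes the nonlinear term dominate at leading order and pushes $\mathcal E_\beta$ strictly negative. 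Dichotomy is excluded by the strict subadditivity inequality $E_\beta(N)<E_\beta(\alpha)+E_\beta(N-\alpha)$ for $0<\alpha<N$; this is the standard scaling argument $E_\beta(\theta N)<\theta E_\beta(N)$ for $\theta>1$ whenever $E_\beta(N)<0$ — the subcritical-type homogeneity of the nonlinearity relative to the mass (here the critical scaling is broken precisely because we fix $N$ below the threshold, so the problem behaves subcritically) gives the strict inequality. Therefore compactness holds: there exist translations $y_n$ such that $u_n(\cdot+y_n)\to Q_{\beta,N}$ strongly in $L^2$, hence (by the $H^{s/2}$ bound and interpolation) strongly in $L^{2s+2}$, and by weak lower semicontinuity of the kinetic form $Q_{\beta,N}$ is a minimizer.

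Finally, since $Q_{\beta,N}$ realizes the infimum on the $L^2$-sphere, the Lagrange multiplier theorem produces $\gamma=\gamma(\beta,N)\in\mathbb R$ such that $Q_{\beta,N}$ solves the Euler–Lagrange equation \eqref{tw3}; the multiplier is real because the constraint functional $u\mapsto\int|u|^2$ and the energy $\mathcal E_\beta$ are real-valued and the derivative of the constraint at $Q_{\beta,N}$ is $2Q_{\beta,N}\ne 0$. The step I expect to be the main obstacle is ruling out dichotomy, i.e. establishing the strict binding inequality $E_\beta(N)<E_\beta(\alpha)+E_\beta(N-\alpha)$: one has to confirm that fixing the mass strictly below $\|Q\|_{L^2}^2$ genuinely destroys the scale invariance of the mass-critical problem and restores the strict subadditivity, and to make the scaling computation $E_\beta(\theta N)<\theta E_\beta(N)$ rigorous uniformly in $\beta\ge 0$ (keeping track that the drift term scales like $\mu$ while the kinetic term scales like $\mu^s$, so no sign is lost). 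A secondary technical point is justifying the vanishing lemma in the fractional setting $|D|^s$, which is by now standard (e.g.\ via Strauss/Gagliardo–Nirenberg on dyadic blocks) but should be cited carefully.
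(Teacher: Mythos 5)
Your overall strategy—concentration--compactness on a minimizing sequence for \fref{tw4}, ruling out vanishing by negativity of the infimum and dichotomy by strict subadditivity—matches the high-level plan, but the paper does not run it directly on $\mathcal E_\beta$. It first applies the transform $\tau_\beta$ of \fref{tw119}, which conjugates $\left\vert D\right\vert^s-2\beta D$ into the $\beta$-independent symbol $n(\xi)=\left\vert\xi+1\right\vert^s-s\xi-1$ and reduces \fref{tw4} to problem \fref{tw42}. The crucial payoff of this reduction is that $n(\xi)\ge 0$ with $n(0)=0$ and $n(\xi)\sim\tfrac{s(s-1)}{2}\xi^2$ as $\xi\to0$: the linear part of the reduced energy is nonnegative, so the ``linear infimum'' is exactly $0$ and proving $I(N)<0$ suffices.

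Your direct argument has a genuine gap at precisely the spot you flag as delicate. In the unreduced problem the quadratic form
$\int\bigl(\tfrac12\left\vert\xi\right\vert^s-\beta\xi\bigr)\left\vert\hat u(\xi)\right\vert^2\,d\xi$
is \emph{not} nonnegative: its infimum over $\int\left\vert u\right\vert^2=N$ equals $\tfrac{m_\beta(\xi^*)N}{2}=-\tfrac{(s-1)(\xi^*)^s N}{2}<0$ for $\beta>0$ (approached as $\left\vert\hat u\right\vert^2$ concentrates at $\xi^*$). Hence if $\left\Vert u_n\right\Vert_{L^{2s+2}}\to 0$ along a vanishing sequence you can only conclude $\liminf\mathcal E_\beta(u_n)\ge \tfrac{m_\beta(\xi^*)N}{2}$, not $\ge 0$. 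So knowing $E_\beta(N)<0$ is not enough to exclude vanishing, nor to obtain strict decrease of the rescaled infimum $E_{\beta,1}(N)$ in $N$ and hence strict subadditivity; you need the stronger binding inequality $E_\beta(N)<\tfrac{m_\beta(\xi^*)N}{2}$. Your scaling $u=\mu^{1/2}v(\mu\cdot)$ does not deliver that: kinetic and nonlinear terms scale as $\mu^s$ while the drift scales as $\mu$, and since $s>1$ the drift dominates (not the nonlinearity, as you assert), giving $E_\beta(N)<0$ but only by an amount $O(\mu)$, which does not reach below $\tfrac{m_\beta(\xi^*)N}{2}$.

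To close the gap you must incorporate the Galilean phase into the trial function, taking $u(x)=e^{i\xi^*x}\mu^{1/2}w(\mu x)$. In Fourier this centers $\hat u$ at $\xi^*$, the linear part becomes $\tfrac{m_\beta(\xi^*)N}{2}+O(\mu^2)$ (by the quadratic minimum of $\tfrac12\left\vert\xi\right\vert^s-\beta\xi$ at $\xi^*$), and the nonlinear part is $-c\mu^s$ with $c>0$; since $s<2$, $\mu^s\gg\mu^2$ as $\mu\to0$ and one finally gets $E_\beta(N)<\tfrac{m_\beta(\xi^*)N}{2}$. This is exactly the computation that the transform $\tau_\beta$ carries out once and for all, which is why the paper works with a symbol that vanishes to second order at its minimum; the same quadratic behaviour also provides the coercivity refinement \fref{tw14} needed for boundedness below. (Note also that for $\beta=0$ both $\xi^*=0$ and the drift vanish, and then $E_0(N)=0$ is not attained for $N<\left\langle Q,Q\right\rangle$; this step of both your argument and the paper's transform implicitly needs $\beta>0$.) Apart from this, your remaining steps—boundedness below via the sharp Gagliardo--Nirenberg inequality \fref{tw5}, the vanishing/dichotomy trichotomy, weak lower semicontinuity, and the Lagrange-multiplier derivation of \fref{tw3}—are implemented in the paper in essentially the way you describe, just phrased through the Hmidi--Keraani profile decomposition (Lemma \ref{L2}) rather than Lions' measure-theoretic dichotomy.
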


\noindent\underline{Asymptotic as $N\to 0$}. For $0<N<\|Q\|^2_{L^2} $ and $\beta\geq0$, we denote by
$\mathbf{Q}_{\beta,N}$ the set of minimizers of the problem (\ref{tw4})$,$
which is not empty by the previous result. Let $\mathcal{R}\in H^{1}$ be the
unique positive, radial symmetric solution of
\begin{equation}
-\triangle\mathcal{R+}\lambda\left(  s\right)  \mathcal{R-R}^{2s+1}=0
\label{tw75}%
\end{equation}
with $\lambda\left(  s\right)  :=\left(  \frac{s\left(  s-1\right)  }{2}%
\rho_{0}^{s}\right)  ^{-\frac{2}{2-s}}$, $\rho_{0}=\int\left\vert
\mathcal{R}_{0}\right\vert ^{2}$ and $\mathcal{R}_{0}$-the solution to the equation
$-\triangle\mathcal{R}_{0}\mathcal{+R}_{0}\mathcal{-R}_{0}^{2s+1}=0$ (see
Proposition \ref{P1} below). For $\beta\geq0,$ we denote
\[
\xi^{\ast}=\left(  \frac{2\beta}{s}\right)  ^{\frac{1}{s-1}}.
\]

\begin{theorem}[Asymptotics of $Q_{\beta,N}$ as $N\to0$]
\label{Th2}Let $0<N<\|Q\|^2_{L^2},$ $\beta\geq0$ and
$Q_{\beta,N}\in\mathbf{Q}_{\beta,N}.$ Consider the function%
\begin{equation}
R_{N}\left(  x\right)  =\left(  \frac{s\left(  s-1\right)  }{2}\right)
^{-\frac{1}{2s}}e^{-i\left(  \xi^{\ast}N^{-\frac{s}{2-s}}x\right)  }%
N^{-\frac{1}{2-s}}\left(  \xi^{\ast}\right)  ^{-1/2}Q_{\beta,N}\left(
\frac{x}{N^{\frac{s}{2-s}}\xi^{\ast}}\right)  . \label{tw117}%
\end{equation}
Then, there exist $\tilde{x},\tilde{\gamma}\in\mathbb{R}$, $\tilde{\gamma
}=\tilde{\gamma}\left(  N\right)  $ and $\tilde{x}=\tilde{x}\left(  N\right)
,$ such that
\begin{equation}
\lim_{N\rightarrow0}\left\Vert e^{i\tilde{\gamma}}R_{N}\left(  \cdot+\tilde
{x}\right)  -\mathcal{R}\right\Vert _{H^{r}}=0, \label{tw171}%
\end{equation}
for any $r\geq0,$ uniformly with respect to $\beta\geq0$. Moreover, suppose
that $Q_{\beta,N}$ solves (\ref{tw3}) with the Lagrange multiplier
$\gamma\left(  \beta,N\right)  \in\mathbb{R}$ and let $\theta_{N}$ be given by $\theta_{N}=\frac{2}%
{s}\left(  \left(  s-1\right)  ^{-1}\left(  \frac{2\beta}{s}\right)
^{-\frac{s}{s-1}}\gamma\left(  \beta,N\right)  -1\right)  .$ Then,%
\[
\left\vert \theta_{N}-\lambda\left(  s\right)  \right\vert =o\left(  1\right)
,\text{ as }N\rightarrow0,
\]
uniformly with respect to $\beta\geq0$.
\end{theorem}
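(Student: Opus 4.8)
The plan is to renormalize \eqref{tw4} so that, after the change of variables \eqref{tw117}, it becomes a controlled perturbation of the constrained minimization associated with \eqref{tw75}, and then to combine a sharp two-term expansion of the ground energy $E_\beta(N)$ with a concentration-compactness argument. The $\beta$-dependence is dealt with first: the $L^2$-preserving dilation $u\mapsto u_\lambda:=\lambda^{1/2}u(\lambda\,\cdot)$ obeys $\mathcal E_\beta(u_\lambda)=\lambda^{s}\mathcal E_{\beta\lambda^{1-s}}(u)$, hence $E_\beta(N)=\lambda^{s}E_{\beta\lambda^{1-s}}(N)$, so taking $\lambda=\beta^{1/(s-1)}$ reduces everything to $\beta=1$, with the $\beta$-dependence then carried by explicit powers of $\xi^{\ast}$; this is the mechanism producing the uniformity in $\beta$. (The degenerate case $\beta=0$, where $\xi^{\ast}=0$ and \eqref{tw117} is meaningless, falls outside this scheme and has to be treated apart.)

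For the sharp energy expansion, the upper bound comes from inserting into $\mathcal E_\beta$ the test function $u(x)=a\,e^{i\xi^{\ast}x}\mathcal R_0(bx)$, with $a,b$ tuned so that $\|u\|_{L^2}^2=N$ and so that, at the optimal scale $b\sim N^{s/(2-s)}\xi^{\ast}$, the kinetic and potential contributions balance. Taylor expanding the symbol $|\xi|^s-2\beta\xi$ at its unique minimum $\xi^{\ast}$, where the first derivative vanishes and the Hessian equals $\tfrac{s(s-1)}{2}(\xi^{\ast})^{s-2}$, gives
\[
E_\beta(N)=-\tfrac{s-1}{2}(\xi^{\ast})^{s}N-\bigl(1+o(1)\bigr)\,\kappa(s)\,(\xi^{\ast})^{s}\,N^{\frac{s+2}{2-s}},\qquad N\to0,
\]
with $\kappa(s)>0$ an explicit constant issuing from the constrained minimization of the effective (local, $L^2$-subcritical) functional $v\mapsto\tfrac12\|v'\|_{L^2}^2-\tfrac1{2s+2}\|v\|_{L^{2s+2}}^{2s+2}$, whose minimizers are dilates of $\mathcal R_0$; this is where $\lambda(s)$, $\rho_0$ and $\mathcal R$ come from. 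For the matching lower bound one writes an admissible $u$ as $u=e^{i\xi^{\ast}x}v$ and computes
\[
\mathcal E_\beta(u)+\tfrac{s-1}{2}(\xi^{\ast})^{s}N=\tfrac12\int_{\mathbb R}\bigl(|\xi+\xi^{\ast}|^{s}-2\beta\xi-(\xi^{\ast})^{s}\bigr)\,|\widehat v(\xi)|^{2}\,d\xi-\tfrac1{2s+2}\|v\|_{L^{2s+2}}^{2s+2};
\]
the shifted symbol is nonnegative, vanishes only at $\xi=0$, behaves like $\tfrac{s(s-1)}{2}(\xi^{\ast})^{s-2}\xi^{2}$ there and grows at least like $|\xi|^{s}$ at infinity, so playing this coercivity, the one-dimensional Gagliardo--Nirenberg inequality \eqref{tw5} and the upper bound against one another forces $\|v'\|_{L^2}^{2}$ to be of size $N^{(s+2)/(2-s)}$ (up to a $\beta$-dependent constant); hence $v$ carries only $O(N^{s/(2-s)})$ frequencies around $\xi^{\ast}$, the cubic and higher Taylor remainders of $|\xi+\xi^{\ast}|^{s}$ are genuinely of lower order, and the two bounds meet. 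This quantitative, $\beta$-uniform lower bound is the analytic heart of the argument, and the step I expect to be the main obstacle.

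Given the expansion, the rescaled functions $R_N$ of \eqref{tw117} form an almost-minimizing family for the limit problem $\inf\{\tfrac12\|v'\|_{L^2}^{2}-\tfrac1{2s+2}\|v\|_{L^{2s+2}}^{2s+2}:\ \|v\|_{L^2}^{2}=\|\mathcal R\|_{L^2}^{2}\}$, the energies converging at the rate just produced, and they are bounded in $\dot H^{1}\cap L^2$. A concentration-compactness (profile-decomposition) argument applies: its strict subadditivity, which rules out vanishing and dichotomy, is automatic here because the limit problem is $L^2$-subcritical with negative infimum, strictly concave in the mass. It follows that, up to the translation $\tilde x(N)$ and the phase $\tilde\gamma(N)$, $e^{i\tilde\gamma}R_N(\cdot+\tilde x)$ converges strongly in $L^2$ to a minimizer of the limit problem; that minimizer solves \eqref{tw75}, so by the uniqueness statement of Proposition~\ref{P1} and the normalization built into \eqref{tw117}, it equals $\mathcal R$. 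No mass being lost, the convergence is strong in $L^2$, and, via the energy convergence, in $H^{1}$.

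It remains to upgrade the convergence and to pin down $\gamma$. In the renormalized frame \eqref{tw3} reads $\mathcal A_N R_N=R_N|R_N|^{2s}$, where $\mathcal A_N$ is a positive Fourier multiplier whose symbol is $\mu_N+\xi^{2}+O\!\bigl(N^{s/(2-s)}|\xi|^{3}\bigr)$ for $|\xi|\le C N^{-s/(2-s)}$ and $\asymp N^{-s}|\xi|^{s}$ beyond, with bounded bottom $\mu_N$ (a renormalization of $\gamma(\beta,N)-(s-1)(\xi^{\ast})^{s}$). Thus $\mathcal A_N^{-1}$ is uniformly bounded and gains $s$ derivatives, so bootstrapping $R_N=\mathcal A_N^{-1}\!\bigl(R_N|R_N|^{2s}\bigr)$, together with the uniform exponential decay of $R_N$ (a consequence of the strict positivity of $\mathcal A_N$ away from $\xi=0$), gives uniform bounds on $R_N$ in every $H^{r}$; interpolating with the $L^2$ convergence upgrades the latter to convergence in $H^{r}$ for all $r\ge0$, uniformly in $\beta$. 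Finally, passing to the limit in $\mathcal A_N R_N=R_N|R_N|^{2s}$ and comparing with \eqref{tw75} forces $\mu_N\to\lambda(s)$; unwinding the scaling, this reads
\[
\gamma(\beta,N)=(s-1)(\xi^{\ast})^{s}+\bigl(1+o(1)\bigr)\,\tfrac{s(s-1)}{2}\,\lambda(s)\,(\xi^{\ast})^{s}\,N^{\frac{2s}{2-s}},\qquad N\to0,
\]
uniformly in $\beta$, which is the claimed asymptotics for the renormalized multiplier $\theta_N$.
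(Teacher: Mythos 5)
Your overall plan follows the paper's strategy quite closely: renormalize the $\beta$-dependence away via a dilation and Galilean modulation, rescale to match the local $H^{1}$ minimization problem for $\mathcal R$, establish convergence of the rescaled energies $Y_\kappa\to I_0$ via matching upper and lower bounds built on a Taylor expansion of the symbol at its minimum, invoke a stability/compactness statement for the limit problem (Proposition~\pref{P1}), bootstrap to all $H^{r}$ through the Euler--Lagrange equation, and read off the Lagrange multiplier. Your observation about $\beta=0$ is valid: $\xi^{\ast}=0$ there, so \eqref{tw117} degenerates (and so does the paper's transform $\tau_\beta$), so the "uniformly for $\beta\geq0$" in the statement really means $\beta>0$.

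The gap in your sketch concerns the passage from "$R_N$ minimizes $\mathcal Y_N$" to "$R_N$ is an almost-minimizing, $H^{1}$-bounded family for the limit problem $\mathcal I_0$." These two functionals agree only on low frequencies: the rescaled symbol $n_N(\xi)$ equals $\xi^{2}\bigl(1+O(N^{s/(2-s)}|\xi|)\bigr)$ for $|\xi|\lesssim N^{-s(1-\delta)/(2-s)}$ but is only comparable to $N^{-s}|\xi|^{s}$ beyond, so a minimizer of $\mathcal Y_N$ is a priori only bounded in $H^{s/2}$, not in $H^{1}$, and $\mathcal I_0(R_N)$ is not a priori finite. You assert the $\dot H^{1}\cap L^{2}$ bound and "energy convergence in $H^{1}$" without a mechanism for it; this is precisely what the paper supplies with the Fourier cut-off $R^{(\kappa)}=w_\kappa+r_\kappa$ at frequency $\kappa^{-1/3+\delta/3}$ and the sharp symbol estimates of Lemma~\pref{L6}. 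Those estimates first show that the high-frequency piece $r_\kappa$ tends to zero in $H^{s/2}$ and contributes nonnegatively to $\mathcal Y_N$, and that $w_\kappa$ is bounded in $H^{1}$ with $\mathcal I_0(w_\kappa)\to I_0$; only then does Proposition~\pref{P1} apply (to $w_\kappa$, not to $R_N$ directly), after which $r_\kappa\to0$ upgrades the conclusion to $R_N$. Without this splitting your concentration-compactness step has nothing to hang on to. The subsequent $H^{r}$ bootstrap and the identification $\theta_N\to\lambda(s)$ are as you describe and match the paper, with the caveat that $\mathcal A_N^{-1}$ gains only $s$ (not $2$) derivatives, forcing the iteration the paper performs around \eqref{tw62}.
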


\noindent\underline{Uniqueness for small mass}.

\begin{theorem}[Uniqueness for small mass]
\label{Th3}There exists $0<N_{0}<\|Q\|_{L^2}^2 ,$ such that
the following holds. Given $0<N<N_{0},$ $\beta\geq0$ and $Q_{\beta,N}%
,\tilde{Q}_{\beta,N}\in\mathbf{Q}_{\beta,N},$ there exist $\phi,y\in
\mathbb{R}$ such that
\[
\tilde{Q}_{\beta,N}\left(  x\right)  =e^{i\phi}Q_{\beta,N}\left(  x-y\right)
.
\]
\end{theorem}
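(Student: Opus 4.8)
The plan is to prove Theorem~\ref{Th3} by a contradiction argument combined with the precise asymptotic description provided by Theorem~\ref{Th2} and the nondegeneracy of the limiting elliptic profile $\mathcal{R}$. Suppose the uniqueness fails: then there exists a sequence $N_k\to 0$, speeds $\beta_k\ge 0$, and two minimizers $Q_k:=Q_{\beta_k,N_k}$ and $\tilde Q_k:=\tilde{Q}_{\beta_k,N_k}$ in $\mathbf{Q}_{\beta_k,N_k}$ that are not equal up to phase and translation. Applying the renormalization \eqref{tw117} to both, we obtain $R_k$ and $\tilde R_k$ with, after suitable modulation parameters $(\tilde\gamma_k,\tilde x_k)$ and $(\tilde{\tilde\gamma}_k,\tilde{\tilde x}_k)$, convergence to the \emph{same} limit $\mathcal R$ in every $H^r$ by \eqref{tw171}. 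Hence $w_k:=e^{i\tilde\gamma_k}R_k(\cdot+\tilde x_k)$ and $\tilde w_k:=e^{i\tilde{\tilde\gamma}_k}\tilde R_k(\cdot+\tilde{\tilde x}_k)$ both converge strongly to $\mathcal R$, and the difference $h_k:=w_k-\tilde w_k\to 0$ in $H^r$ for all $r$. Crucially, both $w_k$ and $\tilde w_k$ satisfy a \emph{renormalized} version of \eqref{tw3}: substituting \eqref{tw117} into \eqref{tw3} and using the scaling in which $\xi^\ast N^{-s/(2-s)}$ and $N^{-s/(2-s)}\xi^\ast$ appear, one derives an equation of the schematic form
\begin{equation}
\label{renormeq}
\left[\varepsilon_k^{s}\,\L_{\varepsilon_k} - \triangle + \theta_{N_k}\right] w_k = w_k|w_k|^{2s} + \text{(lower order in }\varepsilon_k),
\end{equation}
where $\varepsilon_k\to 0$ encodes the semiclassical parameter coming from $|D|^s$ being a perturbation of $-\triangle$ in the rescaled variables, and $\theta_{N_k}\to\lambda(s)$ by Theorem~\ref{Th2}.

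Next I would subtract the two equations. Writing the nonlinearity difference as $w_k|w_k|^{2s}-\tilde w_k|\tilde w_k|^{2s} = V_k h_k + \overline{W_k h_k}$ with $V_k,W_k$ bounded and converging (in $H^r$, using the algebra property and $s>1$ so the nonlinearity is $\mathcal C^1$) to the corresponding coefficients $V_\infty = (s+1)\mathcal R^{2s}$, $W_\infty = s\,\mathcal R^{2s}$ of the linearized operator at $\mathcal R$, we get
\begin{equation}
\label{diffeq}
\left[-\triangle + \lambda(s)\right] h_k = V_\infty h_k + \overline{W_\infty h_k} + o(1)\|h_k\|_{H^r} + \text{err}_k,
\end{equation}
where $\text{err}_k$ collects the $\varepsilon_k$-terms from \eqref{renormeq} applied to $h_k$ and the differences $\theta_{N_k}-\lambda(s)$, all of which are $o(1)$ relative to $\|h_k\|_{H^r}$ in the appropriate weighted/Sobolev norm. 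Normalizing $\hat h_k := h_k/\|h_k\|$ (in $H^1$, say), the sequence $\hat h_k$ is bounded and, by elliptic regularity applied to \eqref{diffeq} together with exponential decay estimates inherited from $\mathcal R$, is precompact; any limit $\hat h_\infty$ satisfies the homogeneous linearized equation $[-\triangle+\lambda(s)]\hat h_\infty = (s+1)\mathcal R^{2s}\hat h_\infty + s\,\mathcal R^{2s}\overline{\hat h_\infty}$, i.e. $\hat h_\infty \in \ker L_+ \oplus i\ker L_-$ where $L_\pm$ are the usual real/imaginary linearized operators at the positive radial ground state $\mathcal R$ of \eqref{tw75}. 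By nondegeneracy of $\mathcal R$ (a classical fact for the model equation $-\triangle\mathcal R + \lambda\mathcal R - \mathcal R^{2s+1}=0$ on $\mathbb R$: $\ker L_+ = \mathrm{span}\{\mathcal R'\}$, $\ker L_- = \mathrm{span}\{\mathcal R\}$), we get $\hat h_\infty \in \mathrm{span}\{\mathcal R', i\mathcal R\}$.

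Finally I would kill this limiting direction using modulation: the parameters $(\tilde\gamma_k,\tilde x_k)$ and $(\tilde{\tilde\gamma}_k,\tilde{\tilde x}_k)$ in the decomposition \eqref{tw171} can (and should) be chosen to impose orthogonality conditions $\langle w_k - \mathcal R, i\mathcal R\rangle = \langle w_k-\mathcal R, \mathcal R'\rangle = 0$ and likewise for $\tilde w_k$ — this is the standard choice of modulation parameters via the implicit function theorem near $\mathcal R$, legitimate because the relevant Gram matrix is nondegenerate precisely by the nondegeneracy of $\mathcal R$. These orthogonality conditions pass to $h_k$ and hence to $\hat h_\infty$, forcing $\hat h_\infty \perp \mathrm{span}\{\mathcal R', i\mathcal R\}$. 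Combined with $\hat h_\infty \in \mathrm{span}\{\mathcal R', i\mathcal R\}$ this gives $\hat h_\infty = 0$, contradicting $\|\hat h_\infty\| = \lim\|\hat h_k\|/\|h_k\| = 1$. Therefore for $N$ small enough, $w = \tilde w$, hence $Q_{\beta,N}$ and $\tilde Q_{\beta,N}$ coincide up to the modulation phase and translation, which after unwinding \eqref{tw117} is exactly the claimed relation $\tilde Q_{\beta,N}(x) = e^{i\phi}Q_{\beta,N}(x-y)$.

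The main obstacle I anticipate is \textbf{controlling the nonlocal perturbation term} $\varepsilon_k^{s}\L_{\varepsilon_k} w_k$ and its effect on the difference equation \eqref{diffeq}: one must verify that, in the renormalized variables, the operator $|D|^s$ really does converge to $-\triangle$ strongly enough — not just formally but in an operator-norm sense on the weighted spaces where the exponential decay of $\mathcal R$ lives — and that this convergence is \emph{uniform in} $\beta\ge 0$ (which is where the parameter $\xi^\ast = (2\beta/s)^{1/(s-1)}$ and its interplay with the $N$-scaling must be tracked carefully, exactly as in Theorem~\ref{Th2}). A secondary technical point is establishing the uniform exponential decay and $H^r$-precompactness of $\hat h_k$ directly from \eqref{diffeq} with the nonlocal remainder present; this likely requires a bootstrap using the decay of $\mathcal R$ and commutator estimates for $|D|^s$ against exponential weights, but since everything is a small perturbation of the well-understood local elliptic problem \eqref{tw75}, these should go through.
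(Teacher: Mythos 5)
Your proposal identifies the right ingredients — the convergence $R_N\to\mathcal R$ from Theorem~\ref{Th2}, the nondegeneracy $\ker L_+=\operatorname{span}\{\mathcal R'\}$ and $\ker L_-=\operatorname{span}\{\mathcal R\}$ of the limiting local profile, a modulation step via the implicit function theorem to absorb phase and translation, and the fact that the nonlocal operator $n_N(D)$ degenerates to $-\triangle$ in the rescaled variables. However, the paper's proof is structured differently from yours, and the difference is not merely cosmetic. The paper first proves (Lemma~\ref{L8}) a quantitative uniform coercivity estimate
$\|f\|_{H^{s/2}}\leq C\bigl(\|\mathcal L_{R_N}f\|_{H^{-s/2}}+|(f,iR_N)|+|(f,\nabla R_N)|\bigr)$
valid for all $f\in H^{s/2}$ and all $N<N_0$, and then (Lemma~\ref{L7}) modulates $\varepsilon_N:=\tilde R_N-e^{i\gamma}R_N(\cdot-y)$ to satisfy the two orthogonality conditions, observes that $\|\mathcal L_{\tilde R_N}\varepsilon_N\|_{H^{-s/2}}=o(1)\|\varepsilon_N\|_{H^{s/2}}$ by subtracting the two Euler--Lagrange equations, and concludes $\varepsilon_N\equiv 0$ directly. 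You instead run a contradiction argument: normalize the difference $\hat h_k=h_k/\|h_k\|$, extract a limit by compactness, show the limit lies in $\ker L_+\oplus i\ker L_-$, and then rule it out by orthogonality.

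The weak point in your route is exactly the one you flag: precompactness of $\hat h_k$. This is not free and, in the absence of an a priori coercivity estimate, is essentially equivalent to proving the estimate you are trying to avoid proving. The paper circumvents this by a frequency decomposition $f=f_1+r$ (cutoff at $|\xi|\leq\kappa^{-\alpha}$ with $\kappa=N^{s/(2-s)}$): on high frequencies the bound $\|n_N(D)r\|_{H^{-s/2}}\geq c\|r\|_{H^{s/2}}$ from Lemma~\ref{L6} gives coercivity for free, with no need to compare with $-\triangle$; on low frequencies, Lemma~\ref{L6} shows $n_N(\kappa\xi)=\tfrac{s(s-1)}{2}(\kappa\xi)^2+O(\kappa^{3(1-\alpha)})$, so $\mathcal L_{R_N}$ coincides with the local $\mathcal L$ up to errors controlled by $\|f_1\|_{H^{s/2}}$, and the classical coercivity of $\mathcal L$ applies. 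This gives a direct, uniform-in-$N$ estimate and sidesteps both the compactness issue and the commutator estimates for $|D|^s$ against exponential weights that you anticipate needing. The frequency splitting also resolves your concern about uniformity in $\beta$: since the reduction to the $\beta$-independent minimization problem \eqref{tw42} has already been performed via the transform $\tau_\beta$ (Lemma~\ref{L1}), the coercivity constant and the threshold $N_0$ are automatically $\beta$-independent. Your broad strategy is sound, but the specific way you propose to close the compactness loop would require reconstructing, in a less transparent form, the ingredients of Lemma~\ref{L8}; the paper's frequency-split coercivity is the more economical way to do it.
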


\noindent\underline{Control of the tails}. Finally, we have a complete description of the tail of solutions for small mass. Let \begin{equation}
\mathcal{C}_{1}=\sqrt{\frac{\pi}{2\lambda\left(  s\right)  }}\text{ \ and
}\mathcal{C}_{2}=\dfrac{\left(  si^{s+1}+\left(  -i\right)  ^{s+1}\right)
e^{-i\frac{x}{\kappa}}}{2\sqrt{2\pi}\left(  s-1\right)  }\Gamma\left(
s\right)  , \label{tw168}%
\end{equation}
where $\Gamma\left(  s\right)  $ denotes the Gamma function.

\begin{theorem}[Tail asymptotics for small mass]
\label{Th4}Let $R_{N}$ be defined by (\ref{tw117}) and $\tilde{x}%
,\tilde{\gamma}\in\mathbb{R}$ be such that (\ref{tw171}) holds. Then, the
following asymptotics are valid%
\begin{align*}
e^{i\tilde{\gamma}}R_{N}\left(  x+\tilde{x}\right)   &  =\mathcal{C}%
_{1}e^{-\sqrt{\lambda\left(  s\right)  }\left\vert x\right\vert }\int
e^{\sqrt{\lambda\left(  s\right)  }y}\left(  \left\vert \mathcal{R}\right\vert
^{2s}\mathcal{R}\right)  \left(  y\right)  dy+\frac{\mathcal{C}_{2}%
N^{\frac{s\left(  2+s\right)  }{2-s}}}{\left\vert x\right\vert ^{s+1}}%
\int\left\vert \mathcal{R}\right\vert ^{2s}\mathcal{R}\\
&  +\left(  e^{-\sqrt{\lambda\left(  s\right)  }\left\vert x\right\vert
}+\frac{N^{\frac{s\left(  2+s\right)  }{2-s}}}{\left\vert x\right\vert ^{s+1}%
}\right)  \left(  o_{\left\vert x\right\vert }\left(  1\right)  +o_{N}\left(
1\right)  \right)  ,
\end{align*}
where $o_{N}\left(  1\right)  \rightarrow0,$ as $N\rightarrow0,$ and
$o_{\left\vert x\right\vert }\left(  1\right)  \rightarrow0,$ as $\left\vert
x\right\vert \rightarrow\infty.$
\end{theorem}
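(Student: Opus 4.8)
The plan is to start from the renormalized equation satisfied by $\widetilde{R}_N(x):=e^{i\tilde\gamma}R_N(x+\tilde x)$, which after the scaling and modulation performed in Theorem \ref{Th2} takes the Schr\"odinger-type form $(-\Delta+\theta_N)\widetilde R_N+(\text{small nonlocal correction})\,\widetilde R_N=\widetilde R_N|\widetilde R_N|^{2s}$, where the nonlocal correction comes from the difference between $|D|^s-2\beta D$ and its second-order Taylor expansion around the frequency $\xi^*$. The key point is that in the rescaled variable the operator $|D|^s-2\beta D+\gamma$, conjugated by the oscillation $e^{-i\xi^* N^{-s/(2-s)}x}$ and expanded, produces $-\Delta+\lambda(s)$ at leading order (this is exactly what drives \eqref{tw171}), while the remainder is a Fourier multiplier whose symbol is $O(N^{s/(2-s)}|\eta|^3)$ for bounded frequencies but which carries a nonsmooth, homogeneous-of-order-$s$ piece near $\eta=0$ coming from the branch point of $|\xi|^s$ at $\xi=0$ in the original variable. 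It is precisely this branch-point contribution that will generate the algebraic tail $N^{s(2+s)/(2-s)}|x|^{-(s+1)}$ with the explicit constant $\mathcal{C}_2$.

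\medskip

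First I would write $\widetilde R_N=G_{\theta_N}*\bigl(\widetilde R_N|\widetilde R_N|^{2s}\bigr)+(\text{correction terms})$, where $G_\mu$ is the Green's function of $-\Delta+\mu$ on $\mathbb R$, namely $G_\mu(x)=\frac{1}{2\sqrt\mu}e^{-\sqrt\mu|x|}$. The convergence \eqref{tw171} in every $H^r$ gives, by Sobolev embedding and the decay of $\mathcal R$, that $\widetilde R_N|\widetilde R_N|^{2s}\to \mathcal R|\mathcal R|^{2s}$ in a weighted space allowing the exponential-moment integral $\int e^{\sqrt{\lambda(s)}y}(|\mathcal R|^{2s}\mathcal R)(y)\,dy$ to converge. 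Evaluating $G_{\theta_N}*\bigl(\widetilde R_N|\widetilde R_N|^{2s}\bigr)$ for $|x|$ large and using $\theta_N\to\lambda(s)$ yields the first term $\mathcal C_1 e^{-\sqrt{\lambda(s)}|x|}\int e^{\sqrt{\lambda(s)}y}(|\mathcal R|^{2s}\mathcal R)(y)\,dy$, with $\mathcal C_1=\sqrt{\pi/(2\lambda(s))}$ absorbing the normalization in \eqref{tw117} and the $\frac{1}{2\sqrt{\lambda(s)}}$ from $G$; the discrepancy $|\theta_N-\lambda(s)|=o(1)$ and the $H^r$-error both feed into the $o_N(1)$ factor multiplying $e^{-\sqrt{\lambda(s)}|x|}$, and the standard tail estimate for convolution of an exponentially decaying kernel against an exponentially decaying source gives the $o_{|x|}(1)$ factor.

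\medskip

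The main work — and the main obstacle — is extracting the algebraic tail from the correction terms. I would go back to the \emph{un}rescaled equation \eqref{tw3} and represent $Q_{\beta,N}$ via the resolvent $\bigl(|D|^s-2\beta D+\gamma\bigr)^{-1}$ acting on $Q_{\beta,N}|Q_{\beta,N}|^{2s}$. The symbol $m(\xi)=|\xi|^s-2\beta\xi+\gamma$ is smooth away from $\xi=0$ and has a branch singularity of type $|\xi|^s$ at the origin; writing $m(\xi)^{-1}=(\text{smooth part})+(\text{singular part built from }|\xi|^s)$, the large-$x$ behaviour of the inverse Fourier transform is governed by the singular part, and a classical stationary-phase / Erd\'elyi-type lemma for Fourier transforms of $|\xi|^s$-type symbols (using $\int e^{ix\xi}|\xi|^s\,d\xi \sim c_s |x|^{-(s+1)}$ with $c_s$ involving $\Gamma(s)$ and the oscillatory factors $i^{s+1}+(-i)^{s+1}$, and the shift $\xi\mapsto\xi$ versus the oscillation $e^{-ix/\kappa}$ where $\kappa$ encodes $\xi^*$) produces exactly the profile $|x|^{-(s+1)}$ with the constant $\mathcal C_2=\frac{(s\,i^{s+1}+(-i)^{s+1})e^{-ix/\kappa}}{2\sqrt{2\pi}(s-1)}\Gamma(s)$. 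Transporting this back through the scaling $x\mapsto x/(N^{s/(2-s)}\xi^*)$ and the amplitude normalization in \eqref{tw117} introduces the power $N^{s(2+s)/(2-s)}$; here I must be careful to track how the factor $\bigl(\tfrac{s(s-1)}{2}\bigr)^{-1/2s}N^{-1/(2-s)}(\xi^*)^{-1/2}$ and the Jacobian $N^{-s/(2-s)}(\xi^*)^{-1}$ from the convolution combine with the $|x|^{-(s+1)}$ scaling. Finally, $\int |\mathcal R|^{2s}\mathcal R$ appears because, to leading order in $N$, the source $Q_{\beta,N}|Q_{\beta,N}|^{2s}$ is a sharply concentrated bump whose only relevant feature at the far tail is its total integral, computed via \eqref{tw171}; controlling the error between the true source and its mass requires the $H^r$ convergence plus a quantitative decay rate for $\widetilde R_N$ — which one gets by bootstrapping the integral equation — and the resulting errors are collected into the combined $o_N(1)+o_{|x|}(1)$ term. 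The delicate accounting of the three small parameters ($|x|\to\infty$, $N\to0$, and the non-smoothness scale of the symbol) and showing the cross terms are genuinely lower order is where the real effort lies.
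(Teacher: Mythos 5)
Your overall strategy matches the paper's. Both write $\tilde{R}_N$ as the convolution of a resolvent kernel $m_N = \mathcal{F}^{-1}\big((n_N(\cdot)+\theta_N)^{-1}\big)$ against the source $|\tilde{R}_N|^{2s}\tilde{R}_N$, derive the far-field asymptotics of the kernel $m_N$, and then pass those through the convolution using an a priori decay bound on $\tilde{R}_N$ (the bootstrap you invoke corresponds to the paper's Lemma~\ref{L11}). You also correctly identify the two mechanisms: a complex pole of the resolvent near $i\sqrt{\lambda(s)}$ producing the exponential tail, and the $|\xi|^s$ branch point (at $\kappa\xi=-1$ after rescaling, i.e.\ $\xi=0$ in the original variable) producing the $|x|^{-(s+1)}$ tail through $\int_0^\infty e^{-z}z^{s-1}\,dz=\Gamma(s)$.

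The genuine gap is in the justification of the contour argument for the kernel asymptotics. You obtain the exponential piece by replacing $m_N$ with the Green's function of $-\Delta+\theta_N$, but the actual symbol is $n_N(\xi)+\theta_N$, not $\xi^2+\theta_N$; the pole location and its residue must come from the true analytic continuation. The paper's Lemma~\ref{L9} proves that the continuation of $n(\kappa\xi)+\kappa^2\theta^{(\kappa)}$ has exactly one zero in the upper half-strip $\{\operatorname{Re}(\kappa\xi)>-1,\ \operatorname{Im}\xi>0\}$ (whose residue gives $\mathcal{C}_1e^{-\sqrt{\lambda(s)}|x|}$ with the $o_N(1)$ error) and no zeros in the region met by the other half of the contour, so the branch-cut integrals carry no stray residues. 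Without this, your ``smooth part plus $|\xi|^s$ singular part'' splitting cannot rule out additional complex zeros, and the stated exponential rate and prefactor would remain conjectural. A minor further point: the branch cut is approached from both sides (the pieces $I_1,I_2$ in the paper's Lemma~\ref{L10}), and the constant $\mathcal{C}_2$ records exactly this through $i^{s+1}$ and $(-i)^{s+1}$; your Erd\'elyi-type sketch should make that two-sided contribution explicit.
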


\noindent{\it Comments on the results}.\\

{\it 1. Existence and uniqueness}. The existence proof follows the path \cite{Frohlich} which adapts the classical concentration compactness argument \cite{Lions}. Let us say that we focused on dimension $d=1$ only for the sake of simplicity, but clearly the argument can be extended to higher dimensions as well. Uniqueness in the small mass limit requires a careful renormalization on the Fourier side and the sharp understanding of the role Galilean drifts which generate an explicit symmetry group for $s=2$ only. Related renormalization occur for example in \cite{mizu} for the description of high momentum solitary waves. Note that like the case $s=1$ \eqref{cnkenoeneo}, a concentration phenomenon occurs in the limit $N=0$, but the associated profile corresponds to a {\it local} limiting (NLS) problem, profile $\matchal R$, and concentration occurs with large Galilean like oscillations, \eqref{tw117}.\\

{\it 2. Tails and interaction}. The computation of the tail of the travelling wave in Theorem \ref{Th4} relies on a careful computation of the Fourier side. Related results for the travelling waves of the Gross Pitaevski equation are given in \cite{gravejat}. In \cite{GLPR}, the sharp description of the tail of the travelling wave is an essential step for the derivation of the modulation equations associated to energy exchanges between two interacting solitary waves. The derivation of related modulation equations for $1<s<2$ and the description of multiple bubbles interaction is a challenging problem due to the presence of additional high Galilean like oscillations, but Theorems \ref{Th3}-\ref{Th4} are the necessary starting point for such an investigation.\\

The paper is organized as follows. In Section 2 we prove Theorem \ref{Th1}. We
translate problem (\ref{tw4}) to a $\beta-$independent problem and we prove
the existence of minimizers for this translated problem. Section 3 is devoted
to the proof of Theorem \ref{Th2}. Again, we translate (\ref{tw4}) to a
problem where the mass of the minimizers is independent on $N$. We obtain an
asymptotic expansion for small $N$ for the minimizers of this new problem and
for the corresponding Lagrange multipliers. These yield the results of Theorem
\ref{Th2}. Section 4 is dedicated to the proof of Theorem \ref{Th3}. Finally,
in Section 5 we prove Theorem \ref{Th4}.

\subsection*{Acknowledgements}  Both authors are supported by the ERC-2014-CoG 646650 SingWave. P.R. would like to thank A. Soffer for stimulating discussions about this work and the Central China Normal University, Wuhan, where part of this work was done.


\section{Existence of
traveling waves}


This Section is devoted to the proof of the existence of solutions to
(\ref{tw3}). First, we want to reduce (\ref{tw4}) to a problem independent on
$\beta.$ For $1<s\leq2$ and $\beta\geq0$ we define the transform $\tau_{\beta
}$ by%
\begin{equation}
\left(  \tau_{\beta}u\right)  \left(  x\right)  :=\left(  \xi^{\ast}\right)
^{1/2}e^{i\left(  \xi^{\ast}x\right)  }u\left(  \xi^{\ast}x\right)  ,
\label{tw119}%
\end{equation}
where $\xi^{\ast}=\left(  \frac{2\beta}{s}\right)  ^{\frac{1}{s-1}}.$ For any
$\xi\in\mathbb{R}$ we define
\begin{equation}
n\left(  \xi\right)  =\left\vert \xi+1\right\vert ^{s}-s\xi-1 \label{tw114}%
\end{equation}
We consider the minimization problem%
\begin{equation}
I\left(  N\right)  =\inf\left\{  \mathcal{I}\left(  v\right)  :v\in
H^{s/2}\left(  \mathbb{R}\right)  \text{ and}\int\left\vert v\right\vert
^{2}=N\right\}  , \label{tw42}%
\end{equation}
where%
\[
\mathcal{I}\left(  v\right)  =\frac{1}{2}\left(  \int_{\mathbb{R}}{\overline
v}n\left(  D\right)  v-\frac{1}{s+1}\int_{\mathbb{R}}\left\vert v\right\vert
^{2s+2}\right)
\]
with~$n\left(  D\right)  =\mathcal{F}^{-1}n\left(  \xi\right)  \mathcal{F}$. A
minimizer of (\ref{tw42}) satisfies the equation
\begin{equation}
n\left(  D\right)  S+\eta S=S\left\vert S\right\vert ^{2s}, \label{tw103}%
\end{equation}
with some constant $\eta\in\mathbb{R}$. We now prove the following:

\begin{lemma}
\label{L1}Let $1<s\leq2$ and $\beta\geq0.$ Suppose that $S\in H^{s/2}\left(
\mathbb{R}\right)  $ is a minimizer for (\ref{tw42}). Then, $Q_{\beta}\left(
x\right)  =\left(  \tau_{\beta}S\right)  \left(  x\right)  $ minimizes
(\ref{tw4}). Moreover, if $S\in H^{s/2}\left(  \mathbb{R}\right)  $ solves
(\ref{tw103}) with some Lagrange multiplier $\theta\in\mathbb{R}$ , $Q_{\beta
}\left(  x\right)  =\left(  \tau_{\beta}S\right)  \left(  x\right)  $ is a
solution to (\ref{tw3}) with $\gamma=\tilde{\gamma}=\left(  \xi^{\ast}\right)
^{s}\left(  \eta+s-1\right)  .$
\end{lemma}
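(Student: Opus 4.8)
The plan is to verify directly that the transform $\tau_\beta$ intertwines the two minimization problems. First I would compute the action of $\tau_\beta$ on each term in the functional $\mathcal E_\beta$. Writing $Q_\beta = \tau_\beta S$, the change of variables $y = \xi^* x$ immediately gives $\int |Q_\beta|^2 = \int |S|^2$, so the mass constraints match and $N$ is preserved. For the kinetic and drift terms I would pass to Fourier variables: since $(\tau_\beta u)(x) = (\xi^*)^{1/2} e^{i\xi^* x} u(\xi^* x)$, one has $\widehat{\tau_\beta u}(\zeta) = (\xi^*)^{-1/2}\hat u\big(\tfrac{\zeta-\xi^*}{\xi^*}\big)$, i.e. a dilation by $\xi^*$ on the Fourier side composed with a translation by $\xi^*$. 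Plugging this into $\frac12\int \bar u |D|^s u - \beta\int \bar u D u = \frac12\int (|\zeta|^s - 2\beta\zeta)|\hat u(\zeta)|^2\, d\zeta$ and substituting $\zeta = \xi^*(\eta+1)$, the symbol becomes $\frac12 (\xi^*)^s \int \big((\xi^*)^{s}|\eta+1|^s/(\xi^*)^s\cdot(\xi^*)^s \cdots\big)$ — more precisely, using $2\beta = s(\xi^*)^{s-1}$, the bracket $|\zeta|^s - 2\beta\zeta$ evaluated at $\zeta = \xi^*(\eta+1)$ equals $(\xi^*)^s\big(|\eta+1|^s - s(\eta+1)\big) = (\xi^*)^s\big(n(\eta) + s - 1\big)$ by the definition \eqref{tw114} of $n$. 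Hence the quadratic form transforms into $(\xi^*)^s\big(\frac12\int \bar S n(D) S + \frac{s-1}{2}\int|S|^2\big)$.

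Next I would handle the nonlinear term. Again by the substitution $y=\xi^* x$ and $|e^{i\xi^* x}|=1$, one gets $\int |Q_\beta|^{2s+2} = (\xi^*)^{s+1}\int |S|^{2s+2} \cdot (\xi^*)^{-1} = (\xi^*)^{s}\int |S|^{2s+2}$, matching the same overall factor $(\xi^*)^s$. Assembling the pieces,
\[
\mathcal E_\beta(\tau_\beta S) = (\xi^*)^s\Big(\mathcal I(S) + \frac{s-1}{2}\int|S|^2\Big) = (\xi^*)^s\Big(\mathcal I(S) + \frac{s-1}{2}N\Big).
\]
Since $\tau_\beta$ is a bijection of $H^{s/2}(\mathbb R)$ preserving the constraint $\int|u|^2 = N$ (its inverse being of the same type), it maps the constraint set onto itself; therefore $\mathcal E_\beta$ and $(\xi^*)^s\big(\mathcal I(\cdot) + \frac{s-1}{2}N\big)$ have the same infimum over that set and the same minimizers up to the correspondence $u \leftrightarrow \tau_\beta^{-1}u$. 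In particular $E_\beta(N) = (\xi^*)^s\big(I(N) + \frac{s-1}{2}N\big)$, and if $S$ minimizes \eqref{tw42} then $Q_\beta = \tau_\beta S$ minimizes \eqref{tw4}, which is the first assertion.

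For the second assertion I would argue at the level of the Euler--Lagrange equations rather than the functionals, to keep track of the multipliers. If $S$ solves \eqref{tw103}, i.e. $n(D)S + \eta S = S|S|^{2s}$, I apply $\tau_\beta$ and use the conjugation identities just derived: $\tau_\beta\big(n(D)S\big)$ corresponds on the Fourier side to multiplication by $n\big(\tfrac{\zeta-\xi^*}{\xi^*}\big)$, and by the symbol computation above $(\xi^*)^s\big(n\big(\tfrac{\zeta-\xi^*}{\xi^*}\big) + s-1\big) = |\zeta|^s - 2\beta\zeta$, so $(\xi^*)^s\, \tau_\beta\big(n(D)S\big) = \big(|D|^s - 2\beta D\big)(\tau_\beta S) - (\xi^*)^s(s-1)\tau_\beta S$. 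Also $\tau_\beta\big(S|S|^{2s}\big) = (\xi^*)^{-s}(\tau_\beta S)|\tau_\beta S|^{2s}$ by direct inspection of the formula \eqref{tw119} (the phase $e^{i\xi^* x}$ and the $(\xi^*)^{1/2}$ prefactor account exactly for this power). Multiplying the transformed equation by $(\xi^*)^s$ yields
\[
\big(|D|^s - 2\beta D\big)Q_\beta + (\xi^*)^s(\eta + s - 1)\, Q_\beta = Q_\beta|Q_\beta|^{2s},
\]
which is precisely \eqref{tw3} with $\gamma = (\xi^*)^s(\eta + s - 1)$, as claimed. I do not expect a genuine obstacle here: the whole lemma is a bookkeeping exercise in tracking dilation and modulation factors. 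The one point requiring care — the "hard part," such as it is — is getting every power of $\xi^*$ right and correctly identifying the completion-of-the-square identity $|\zeta|^s - 2\beta\zeta = (\xi^*)^s\big(n(\tfrac{\zeta-\xi^*}{\xi^*}) + s-1\big)$, since this is where the definition \eqref{tw114} of $n$ and the choice $\xi^* = (2\beta/s)^{1/(s-1)}$ conspire to remove the $\beta$-dependence; I would double-check it by differentiating both sides in $\zeta$ and matching at $\zeta = \xi^*$. A minor caveat is the degenerate case $\beta = 0$, where $\xi^* = 0$ and $\tau_\beta$ is not defined; there \eqref{tw3}--\eqref{tw4} already coincide with \eqref{tw103}--\eqref{tw42} (since $n(D)|_{\beta=0}$ reduces to $|D|^s$ after noting the limiting normalization), so the statement is trivially true and can be excluded from the transform argument.
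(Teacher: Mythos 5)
Your approach is the same as the paper's: you conjugate by the modulation-dilation $\tau_\beta$, track the symbol on the Fourier side, and match Lagrange multipliers. The strategy is sound, but you have a sign error in the central identity that makes your written argument internally inconsistent.

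Concretely, at $\zeta=\xi^*(\eta+1)$ one has, using $2\beta\xi^*=s(\xi^*)^s$,
\[
|\zeta|^s-2\beta\zeta=(\xi^*)^s\big(|\eta+1|^s-s\eta-s\big)=(\xi^*)^s\big(n(\eta)-(s-1)\big),
\]
not $(\xi^*)^s\big(n(\eta)+s-1\big)$ as you wrote (since $n(\eta)=|\eta+1|^s-s\eta-1$, subtracting $s$ rather than $1$ costs you an extra $-(s-1)$). This is exactly the paper's statement that $m_\beta(\xi^*)=-(s-1)(\xi^*)^s$ is \emph{negative}. Consequently $\mathcal E_\beta(\tau_\beta S)=(\xi^*)^s\big(\mathcal I(S)-\tfrac{s-1}{2}N\big)$, with a minus. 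This does not affect your first conclusion, because the additive constant $-\tfrac{(s-1)}{2}(\xi^*)^s N$ is fixed on the constraint set and so does not change the minimizers. But in the Euler--Lagrange step your intertwining formula should read $(\xi^*)^s\,\tau_\beta\big(n(D)S\big)=\big(|D|^s-2\beta D\big)(\tau_\beta S)+(\xi^*)^s(s-1)\tau_\beta S$; if one carries your stated sign faithfully through the last line, one lands on $\gamma=(\xi^*)^s(\eta-(s-1))$, which contradicts the $\gamma=(\xi^*)^s(\eta+s-1)$ you then write down. So as written, the proof asserts the right answer but the preceding identity would not produce it; fixing the sign in the symbol computation repairs everything. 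A minor additional point: in your $\beta=0$ aside, $n(\xi)=|\xi+1|^s-s\xi-1$ has no $\beta$ in it, so "$n(D)|_{\beta=0}$ reduces to $|D|^s$" is a misstatement; it is rather that the problem \eqref{tw4} at $\beta=0$ is directly the mass-critical fractional ground state problem and is treated without $\tau_\beta$.
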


\begin{proof}
We denote
\[
m_{\beta}\left(  \xi\right)  :=\left\vert \xi\right\vert ^{s}-2\beta\xi.
\]
Observe that
\[
m_{\beta}^{\prime}\left(  \xi\right)  =0\text{ for }\xi=\xi^{\ast}%
\]
and%
\[
m_{\beta}\left(  \xi^{\ast}\right)  =-\left(  \xi^{\ast}\right)  ^{s}\left(
s-1\right)  .
\]
Note that
\[
\left(  \left\vert D\right\vert ^{s}-2\beta D-m_{\beta}\left(  \xi^{\ast
}\right)  \right)  \left(  e^{i\xi^{\ast}x}v\right)  =e^{i\xi^{\ast}x}\left(
\left\vert D+\xi^{\ast}\right\vert ^{s}-2\beta D-\left(  \xi^{\ast}\right)
^{s}\right)  v.
\]
Then, using that $Q_{\beta}\left(  x\right)  =\left(  \tau_{\beta}S\right)
\left(  x\right)  $ we have%
\begin{equation}
\left.
\begin{array}
[c]{c}%
\left[  \left\vert D\right\vert ^{s}-2\beta D\right]  Q_{\beta}=\left[
\left\vert D\right\vert ^{s}-2\beta D-m_{\beta}\left(  \xi^{\ast}\right)
\right]  Q_{\beta}+m_{\beta}\left(  \xi^{\ast}\right)  Q_{\beta}\\
=e^{i\xi^{\ast}x}\left(  \xi^{\ast}\right)  ^{1/2}\left(  \left(  \xi^{\ast
}\right)  ^{s}\left(  n\left(  D\right)  S\right)  \left(  \xi^{\ast}x\right)
+m_{\beta}\left(  \xi^{\ast}\right)  S\left(  \xi^{\ast}x\right)  \right)  .
\end{array}
\right.  \label{tw13}%
\end{equation}
Then, we get%
\[
\left.  \mathcal{E}_{\beta}\left(  Q_{\beta}\right)  =\left(  \xi^{\ast
}\right)  ^{s}\mathcal{I}\left(  S\right)  +\frac{m_{\beta}\left(  \xi^{\ast
}\right)  N}{2}.\right.
\]
Hence, if $S$ minimizers (\ref{tw42}), $Q_{\beta}$ solves (\ref{tw4}). Next,
consider equation (\ref{tw3})$.$ Using (\ref{tw13}) we obtain
\[
\left.
\begin{array}
[c]{c}%
\left[  \left\vert D\right\vert ^{s}-2\beta D+\gamma\right]  Q_{\beta
}-Q_{\beta}\left\vert Q_{\beta}\right\vert ^{2s}\\
=e^{i\left(  \xi^{\ast}x\right)  }\left(  \xi^{\ast}\right)  ^{\frac{2s+1}{2}%
}\left(  n\left(  D\right)  S-S\left\vert S\right\vert ^{2s}+\left(  \xi
^{\ast}\right)  ^{-s}\left[  \gamma-\left(  \xi^{\ast}\right)  ^{s}\left(
s-1\right)  \right]  S\right)  ,
\end{array}
\right.
\]
and thus, as $\gamma=\tilde{\gamma}$ and $S$ solves (\ref{tw103}), we conclude
that
\[
\left[  \left\vert D\right\vert ^{s}-2\beta D+\gamma\right]  Q_{\beta
}-Q_{\beta}\left\vert Q_{\beta}\right\vert ^{2s}=0.
\]
This proves Lemma \ref{L1}.
\end{proof}

Below we will show that problem (\ref{tw42}) has a minimizer.\ More precisely,
we aim to prove the following:

\begin{theorem}
\label{T1}Let $1<s<2.$ Then, for all $0<N<\left\langle Q,Q\right\rangle ,$
problem (\ref{tw42}) has a minimizer $S_{N}\in H^{s/2}\left(  \mathbb{R}%
\right)  $. In particular, $S_{N}$ solves (\ref{tw103}) with some $\eta
=\eta_{N}\in\mathbb{R}$.
\end{theorem}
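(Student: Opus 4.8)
The plan is to follow the concentration-compactness scheme of Lions, adapted to the nonlocal symbol $n(D)$ as in \cite{Frohlich}. First I would establish coercivity of the functional. The symbol $n(\xi)=|\xi+1|^s-s\xi-1$ is nonnegative (by convexity of $t\mapsto|t|^s$ for $s>1$), vanishes to second order at $\xi=0$, and grows like $|\xi|^s$ at infinity; a comparison shows $n(\xi)\gtrsim \min(|\xi|^2,|\xi|^s)$, so $\int \bar v\, n(D) v$ controls $\||D|^{s/2}v\|_{L^2}^2$ modulo lower order $L^2$ terms. Combined with the Gagliardo--Nirenberg inequality \eqref{tw5} (in its translated form, which bounds $\int|v|^{2s+2}$ by $(\int\bar v|D|^sv)(\int|v|^2)^s$ with the sharp constant $\frac{s+1}{\langle Q,Q\rangle^s}$) and the constraint $\int|v|^2=N<\langle Q,Q\rangle$, one gets
\begin{equation}
\mathcal I(v)\geq \frac12\left(1-\left(\tfrac{N}{\langle Q,Q\rangle}\right)^s\right)\int_{\mathbb R}\bar v\,n(D)v - C_N\int_{\mathbb R}|v|^2,
\end{equation}
so $\mathcal I$ is bounded below on the constraint set and minimizing sequences are bounded in $H^{s/2}$. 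In particular $-\infty<I(N)<0$ (negativity follows by testing on a suitable rescaled bump, since the subcritical-in-this-scaling nonlinearity wins at small amplitude).

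Next I would run the concentration-compactness dichotomy on the sequence of densities $\rho_n=|v_n|^2$ with $\int\rho_n=N$. The three alternatives are vanishing, dichotomy, and compactness (up to translation). Vanishing is excluded because $I(N)<0$: if $\sup_y\int_{y-R}^{y+R}|v_n|^2\to0$ then a Lions-type lemma (valid in $H^{s/2}$, $s/2>1/2$) forces $\int|v_n|^{2s+2}\to0$, whence $\liminf\mathcal I(v_n)\geq0$, a contradiction. Dichotomy is excluded via the strict subadditivity inequality $I(N)<I(\alpha)+I(N-\alpha)$ for $0<\alpha<N$; this is the standard consequence of the scaling behaviour of $I$ together with $I(N)<0$ — one shows $I(\theta N)<\theta I(N)$ for $\theta>1$ (roughly, because the nonlinear term scales with a higher power of the mass than the quadratic term), which gives strict superadditivity and hence forbids splitting. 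One must here be slightly careful that the subadditivity argument only needs $I$ defined for masses up to $N<\langle Q,Q\rangle$, which is fine. Therefore compactness holds: there are translates $v_n(\cdot+y_n)\rightharpoonup S$ weakly in $H^{s/2}$ with $\int|S|^2=N$, and weak lower semicontinuity of the (nonnegative) quadratic form $\int\bar v\, n(D)v$ together with strong $L^{2s+2}_{loc}$ convergence (Rellich) upgraded to global convergence (no mass escapes to infinity, by tightness of $\rho_n$) yields $\mathcal I(S)\leq\liminf\mathcal I(v_n)=I(N)$, so $S$ is a minimizer.

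Finally, once $S$ is a minimizer, the Euler--Lagrange equation \eqref{tw103} with a Lagrange multiplier $\eta=\eta_N\in\mathbb R$ follows from the standard Lagrange multiplier theorem applied to the $C^1$ functionals $\mathcal I$ and $v\mapsto\int|v|^2$ on $H^{s/2}$, noting the constraint gradient $2S$ is nonzero. The main obstacle I anticipate is the dichotomy step: establishing the strict subadditivity $I(N)<I(\alpha)+I(N-\alpha)$ requires exploiting the precise homogeneity structure of $\mathcal I$ under the natural rescaling $v\mapsto \mu^a v(\mu \cdot)$ adapted to the symbol $n$, and because $n$ is not homogeneous (it behaves like $|\xi|^2$ near $0$ and $|\xi|^s$ at infinity) one cannot simply read off scaling; instead one argues with the value $I(N)<0$ directly, showing that for any minimizing-type configuration at mass $\alpha$ a dilation to mass $N$ strictly lowers the energy. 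A secondary technical point is the global (as opposed to local) $L^{2s+2}$ convergence, which requires ruling out mass leaking to spatial infinity — handled by the concentration function being tight, a by-product of excluding dichotomy and vanishing.
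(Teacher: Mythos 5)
Your proposal is correct in substance and uses the same three pillars the paper uses — coercivity from the Gagliardo--Nirenberg inequality together with the lower bound $n(\xi)\gtrsim |\xi|^s - C$, strict negativity of $I(N)$ via a rescaled test function, and strict subadditivity $I(N)<I(\alpha)+I(N-\alpha)$ for $0<\alpha<N$ — but the mechanism by which you pass from these to compactness of a minimizing sequence is genuinely different. You run the classical Lions trichotomy on the densities $\rho_n=|v_n|^2$, ruling out vanishing (by $I(N)<0$ plus a Lions-type lemma in $H^{s/2}$) and dichotomy (by strict subadditivity), and then upgrading the surviving compactness alternative to strong convergence. The paper instead invokes the Hmidi--Keraani profile decomposition (their Lemma 2.1, adapted to $H^{s/2}$): it decomposes $u_n=\sum_{j\le l}V^j(\cdot-x_n^j)+u_n^l$, uses the Pythagorean splitting of $\|\cdot\|_{L^2}^2$, $\||D|^{s/2}\cdot\|_{L^2}^2$ and of the equivalent norm $\mathcal L(\cdot)=\int\overline u\, n(D)u+\int|u|^2$, and then shows — via the same strict subadditivity — that all mass must collect in a single profile $V^{j_0}$ with $\|V^{j_0}\|_{L^2}^2=N$; strong $H^{s/2}$ convergence of the translated sequence then follows from $\lim \mathcal L(\tilde u_n)=\mathcal L(S_N)$. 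The profile-decomposition route avoids an explicit tightness argument for $\rho_n$ (you flag this as a "secondary technical point" in your sketch, and it is indeed where some bookkeeping is needed in the Lions approach), and it delivers the strong convergence of a subsequence directly from the norm equivalence $\mathcal L\sim\|\cdot\|_{H^{s/2}}^2$ rather than from a separate Rellich-plus-tightness step. Conversely, your Lions trichotomy is more elementary and does not require setting up the profile decomposition machinery. One more small remark: for the strict subadditivity the paper exploits the factorization $I(N)=N\,I_1(N)$ with $I_1(N)$ the infimum over the unit-mass constraint, and shows $I_1$ strictly decreasing by noting that $I(N)<0$ forces any minimizing sequence to satisfy $\int|v|^{2s+2}\ge c>0$; your heuristic ("the nonlinear term scales with a higher power of the mass than the quadratic term") captures exactly this, but you should make sure to phrase it via the $N$-dependence of the coefficient in $I_1(N)$ rather than via a spatial dilation, since as you rightly note $n(\xi)$ is not scale-homogeneous and a spatial rescaling argument would not close cleanly.
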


\begin{remark}
Note that Theorem \ref{Th1} follows immediately from Lemma \ref{L1} and
Theorem \ref{T1}.
\end{remark}

We begin by preparing several results that are involved in the proof of
Theorem \ref{T1}. First we prove one elementary lemma.

\begin{lemma}
For any $0\leq A<1,$ the estimate%
\begin{equation}
n\left(  \xi\right)  -A\left\vert \xi\right\vert ^{s}\geq\frac{1}{2}\left(
1-A\right)  \left\vert \xi\right\vert ^{s}-C\left(  A\right)  , \label{tw14}%
\end{equation}
with some $C\left(  A\right)  >0\ $is satisfied.
\end{lemma}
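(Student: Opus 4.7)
The plan is to prove the inequality by splitting $\mathbb{R}$ into a large ball and its complement, exploiting two distinct behaviors of $n(\xi)=|\xi+1|^s-s\xi-1$: polynomial growth at infinity with leading coefficient $1$, and continuity (hence boundedness) on compact sets.

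First I would analyze the asymptotic behavior of $n(\xi)$ as $|\xi|\to\infty$. A Taylor expansion of $|\xi+1|^s$ in $1/\xi$ gives, for $\xi>0$ large, $|\xi+1|^s=\xi^s(1+\xi^{-1})^s=\xi^s+s\xi^{s-1}+O(\xi^{s-2})$, and for $\xi<0$ with $|\xi|$ large, $|\xi+1|^s=|\xi|^s(1-|\xi|^{-1})^s=|\xi|^s-s|\xi|^{s-1}+O(|\xi|^{s-2})$. Since $1<s<2$, the terms $s\xi$ and $s\xi^{s-1}$ are both lower order than $|\xi|^s$. Consequently
\[
\lim_{|\xi|\to\infty}\frac{n(\xi)}{|\xi|^s}=1.
\]

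Next, fix $0\le A<1$ and set $\alpha:=\tfrac{1+A}{2}\in[1/2,1)$. By the limit above, there exists $R=R(A)>0$ such that $n(\xi)\ge \alpha |\xi|^s$ for all $|\xi|\ge R$. Rearranging,
\[
n(\xi)-A|\xi|^s\ \ge\ (\alpha-A)|\xi|^s\ =\ \tfrac{1}{2}(1-A)|\xi|^s\quad\text{for } |\xi|\ge R,
\]
which is the claimed inequality (with any $C(A)\ge 0$) on the exterior region.

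For the interior $|\xi|\le R$, the function $\xi\mapsto n(\xi)-A|\xi|^s$ is continuous on the compact interval $[-R,R]$, hence bounded below by some $m(A)\in\mathbb{R}$. Since $\tfrac{1}{2}(1-A)|\xi|^s\le \tfrac{1}{2}(1-A)R^s$ on this interval, we get
\[
n(\xi)-A|\xi|^s\ \ge\ m(A)\ \ge\ \tfrac{1}{2}(1-A)|\xi|^s-\bigl(\tfrac{1}{2}(1-A)R^s-m(A)\bigr),
\]
so setting $C(A):=\max\bigl(0,\,\tfrac{1}{2}(1-A)R^s-m(A)\bigr)$ yields the estimate globally. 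There is no real obstacle here: everything is elementary real analysis. The only point of care is that, because $n(\xi)=O(\xi^2)$ near the origin while $|\xi|^s$ with $s<2$ decays slower, one cannot hope for a pointwise bound $n(\xi)\ge c|\xi|^s$ with $c>0$; this is precisely why the additive constant $C(A)$ is unavoidable and why the argument must treat the region near $\xi=0$ separately by compactness rather than by a pointwise comparison.
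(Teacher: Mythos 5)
Your proof is correct and follows essentially the same strategy as the paper: split $\mathbb R$ into a large ball and its complement, get the coefficient $\tfrac12(1-A)$ in front of $|\xi|^s$ from the fact that $n(\xi)$ grows like $|\xi|^s$ at infinity, and absorb what happens on the compact region into the constant $C(A)$. The only cosmetic difference is that the paper keeps all constants explicit (using the elementary bound $|\xi+1|^s\geq|\xi|^s-2^s|\xi|^{s-1}$ for $|\xi|\geq 2$ and the nonnegativity $n\geq 0$), whereas you invoke $\lim_{|\xi|\to\infty}n(\xi)/|\xi|^s=1$ and compactness, which is softer but equally valid here since no quantitative dependence on $A$ is claimed in the lemma.
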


\begin{proof}
Since%
\[
\left\vert \xi+1\right\vert ^{s}\geq\left\vert \xi\right\vert ^{s}%
-2^{s}\left\vert \xi\right\vert ^{s-1},\text{ for }\left\vert \xi\right\vert
\geq2,
\]
we have%
\[
n\left(  \xi\right)  -A\left\vert \xi\right\vert ^{s}\geq\left(  1-A\right)
\left\vert \xi\right\vert ^{s}-2^{s+2}\left\vert \xi\right\vert .
\]
Then, noting that
\[
\frac{1}{2}\left(  1-A\right)  \left\vert \xi\right\vert ^{s}-2^{s+2}%
\left\vert \xi\right\vert \geq0,
\]
if $\left\vert \xi\right\vert \geq c_{1}\left(  A\right)  =\left(
2^{s+3}\left(  1-A\right)  ^{-1}\right)  ^{\frac{1}{s-1}},$ we deduce that%
\begin{equation}
n\left(  \xi\right)  -A\left\vert \xi\right\vert ^{s}\geq\frac{1}{2}\left(
1-A\right)  \left\vert \xi\right\vert ^{s}, \label{tw111}%
\end{equation}
for $\left\vert \xi\right\vert \geq c_{1}\left(  A\right)  .$ If $\left\vert
\xi\right\vert \leq c_{1}\left(  A\right)  ,$ as $n\left(  \xi\right)  \geq0,$
for all $\xi\in\mathbb{R}$, we estimate%
\begin{equation}
n\left(  \xi\right)  -A\left\vert \xi\right\vert ^{s}\geq\frac{1}{2}\left(
1-A\right)  \left\vert \xi\right\vert ^{s}-c_{2}\left(  A\right)  ,
\label{tw110}%
\end{equation}
where $c_{2}\left(  A\right)  =\left(  A+1\right)  \left(  c_{1}\left(
A\right)  \right)  ^{s}.$ Estimates (\ref{tw111}) and (\ref{tw110}) imply
(\ref{tw14}).
\end{proof}

In order to prove Theorem \ref{T1}, first we show that $I\left(  N\right)  $
is bounded from below. We prove the following:

\begin{lemma}
\label{L4}Let $1<s\leq2.$ Then, for all $0<N\leq\left\langle Q,Q\right\rangle
,$ the following inequality holds%
\begin{equation}
2\mathcal{I}\left(  v\right)  \geq\frac{1}{2}\left(  1-\frac{N^{s}%
}{\left\langle Q,Q\right\rangle ^{s}}\right)  \int_{\mathbb{R}}{\overline
u}\left\vert D\right\vert ^{s}u-C\left(  N\right)  \geq-C\left(  N\right)  ,
\label{tw6}%
\end{equation}
for $v\in H^{s/2}\left(  \mathbb{R}\right)  $ such that $\int\left\vert
v\right\vert ^{2}=N.$ Moreover, any minimizing sequence for problem
(\ref{tw42}) is bounded in $H^{s/2}\left(  \mathbb{R}\right)  ,$ for
$0<N<\left\langle Q,Q\right\rangle .$
\end{lemma}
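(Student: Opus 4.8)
The plan is to establish the lower bound \eqref{tw6} by combining the spectral estimate \eqref{tw14} with the Gagliardo--Nirenberg inequality \eqref{tw5}, and then to deduce boundedness of minimizing sequences in $H^{s/2}$ as an immediate consequence. First I would apply \eqref{tw14} with a parameter $A=A(N)\in[0,1)$ to be chosen, which gives
\[
\int_{\mathbb{R}}\overline{v}\,n(D)v \;\geq\; A\int_{\mathbb{R}}\overline{v}\left\vert D\right\vert^{s}v \;+\; \tfrac12(1-A)\int_{\mathbb{R}}\overline{v}\left\vert D\right\vert^{s}v \;-\;C(A)\|v\|_{L^2}^2,
\]
so that, writing $\|v\|_{L^2}^2=N$,
\[
2\mathcal{I}(v)\;\geq\; \Bigl(A+\tfrac12(1-A)\Bigr)\int_{\mathbb{R}}\overline{v}\left\vert D\right\vert^{s}v \;-\;\frac{1}{s+1}\int_{\mathbb{R}}\left\vert v\right\vert^{2s+2}\;-\;C(A)N.
\]
Next I would bound the nonlinear term from above using \eqref{tw5}, namely $\frac{1}{s+1}\int|v|^{2s+2}\leq \frac{C_s}{s+1}\bigl(\int \overline{v}|D|^sv\bigr)N^s=\frac{N^s}{\langle Q,Q\rangle^s}\int \overline{v}|D|^sv$, since $C_s=\frac{s+1}{\langle Q,Q\rangle^s}$. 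Substituting, the coefficient in front of $\int \overline{v}|D|^sv$ becomes $A+\tfrac12(1-A)-\frac{N^s}{\langle Q,Q\rangle^s}$.

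The key algebraic point is the choice of $A$. I want the resulting coefficient to be exactly $\tfrac12\bigl(1-\frac{N^s}{\langle Q,Q\rangle^s}\bigr)$, which forces $A+\tfrac12(1-A)-\frac{N^s}{\langle Q,Q\rangle^s}=\tfrac12-\tfrac12\frac{N^s}{\langle Q,Q\rangle^s}$, i.e. $\tfrac12(1+A)=\tfrac12+\tfrac12\frac{N^s}{\langle Q,Q\rangle^s}$, so $A=\frac{N^s}{\langle Q,Q\rangle^s}$. For $0<N\leq\langle Q,Q\rangle$ this gives $0\leq A<1$ when $N<\langle Q,Q\rangle$ (and $A=1$ in the borderline case, which I will need to handle with a trivial separate remark or by noting the inequality is then the harmless $2\mathcal I(v)\geq -C$ after absorbing—actually \eqref{tw14} requires $A<1$ strictly, so for $N=\langle Q,Q\rangle$ one instead uses the sharp form of \eqref{tw5} directly to get $2\mathcal I(v)\geq -C(A)N$ with $A$ slightly less than $1$, or observes that the statement only needs the weaker conclusion $2\mathcal I(v)\geq -C(N)$ there). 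With this choice of $A$, setting $C(N):=C(A(N))\,N$ yields precisely the first inequality in \eqref{tw6}, and since $\int\overline{v}|D|^sv\geq0$ and $1-\frac{N^s}{\langle Q,Q\rangle^s}\geq0$, the second inequality $2\mathcal I(v)\geq -C(N)$ follows at once. (Note the mild typo in \eqref{tw6}: the integrand should read $\overline{v}|D|^sv$, matching $v$.)

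Finally, for boundedness of minimizing sequences: fix $0<N<\langle Q,Q\rangle$, so $1-\frac{N^s}{\langle Q,Q\rangle^s}=:c_0>0$. If $(v_n)$ is minimizing for \eqref{tw42}, then $2\mathcal I(v_n)\to 2I(N)$, and in particular $2\mathcal I(v_n)\leq 2I(N)+1$ for $n$ large. Combining with the first inequality in \eqref{tw6},
\[
\frac{c_0}{2}\int_{\mathbb{R}}\overline{v_n}\left\vert D\right\vert^{s}v_n\;\leq\;2\mathcal I(v_n)+C(N)\;\leq\;2I(N)+1+C(N),
\]
so $\int\overline{v_n}|D|^sv_n$ is bounded; together with $\|v_n\|_{L^2}^2=N$ this gives a uniform bound on $\|v_n\|_{H^{s/2}}^2\simeq \|v_n\|_{L^2}^2+\int\overline{v_n}|D|^sv_n$. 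The only real subtlety is the borderline mass $N=\langle Q,Q\rangle$ in the first assertion, where the sharp Gagliardo--Nirenberg constant is attained only by $Q$ itself and $A<1$ is unavailable; I expect this to be the one place needing a short extra remark, but it does not affect Theorem \ref{T1}, which concerns $N<\langle Q,Q\rangle$ strictly.
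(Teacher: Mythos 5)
Your argument is correct and matches the paper's proof in its ingredients and in the final choice $A = N^{s}/\langle Q,Q\rangle^{s}$; the only difference is presentational (you apply \eqref{tw14} before the Gagliardo--Nirenberg bound and solve for $A$, whereas the paper applies \eqref{tw5} first and then invokes \eqref{tw14} with that same $A$). Your remark about the endpoint $N=\langle Q,Q\rangle$ is a legitimate catch: \eqref{tw14} requires $A<1$ strictly, so the paper's own proof also silently skips that borderline case, which, as you note, does not affect Theorem \ref{T1} or anything downstream since those concern $N<\langle Q,Q\rangle$.
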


\begin{proof}
Let $v\in H^{s/2}\left(  \mathbb{R}\right)  $ be such that $\int\left\vert
v\right\vert ^{2}=N.$ By (\ref{tw5}) we have%
\begin{equation}
2\mathcal{I}\left(  v\right)  \geq\int_{\mathbb{R}}{\overline v}\left(
n\left(  D\right)  v-\frac{N^{s}}{\left\langle Q,Q\right\rangle ^{s}}%
\int_{\mathbb{R}}{\overline v}\left\vert D\right\vert ^{s}v\right)  .
\label{tw16}%
\end{equation}
Using (\ref{tw14}) with $A=\frac{N^{s}}{\left\langle Q,Q\right\rangle ^{s}}$
we get%
\begin{equation}
n\left(  \xi\right)  -\frac{N^{s}}{\left\langle Q,Q\right\rangle ^{s}%
}\left\vert \xi\right\vert ^{s}\geq\frac{1}{2}\left(  1-\frac{N^{s}%
}{\left\langle Q,Q\right\rangle ^{s}}\right)  \left\vert \xi\right\vert
^{s}-C\left(  N\right)  , \label{tw15}%
\end{equation}
for some $C\left(  N\right)  >0.$ Using (\ref{tw15}) in (\ref{tw16}) we attain
(\ref{tw6}). The boundedness of minimizing sequences for (\ref{tw42}) follows
immediately from (\ref{tw6}).
\end{proof}

Next, we show that the infimum $I\left(  N\right)  $ is strictly negative. We have:

\begin{lemma}
Let $1<s<2$ and $0<N<\left\langle Q,Q\right\rangle .$ Then, the estimate
\begin{equation}
I\left(  N\right)  <0 \label{tw10}%
\end{equation}
holds.
\end{lemma}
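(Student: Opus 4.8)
The plan is to establish (\ref{tw10}) by producing a single admissible competitor $v\in H^{s/2}(\mathbb{R})$ with $\int_{\mathbb{R}}|v|^2=N$ on which $\mathcal{I}$ is strictly negative; then $I(N)\le\mathcal{I}(v)<0$. The mechanism I would exploit is that the symbol $n(\xi)=|\xi+1|^s-s\xi-1$ vanishes to second order at the origin, $n(\xi)=\tfrac{s(s-1)}{2}\xi^2+o(\xi^2)$ as $\xi\to0$, so that $n(D)$ behaves like a second order operator at low frequencies, whereas the nonlinearity $|v|^{2s+2}$ is $L^2$-critical with respect to the genuine $s$-th order operator $|D|^{s}$. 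For $1<s<2$ this imbalance is subcritical, so a spreading rescaling will make the nonlinear term win; this is the same low-frequency regime that governs the limiting local profile appearing in the sequel.

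The only elementary input needed is the global bound
\[
0\le n(\xi)\le C_s\,\xi^2,\qquad\xi\in\mathbb{R},
\]
where nonnegativity was already used above, the bound for $|\xi|\ge1$ follows from $n(\xi)\le|\xi+1|^s+s|\xi|+1\le C\xi^2$ using $|\xi|^s\le\xi^2$ (valid since $s\le2$), and the bound for $|\xi|\le1$ follows from the Taylor expansion and continuity of $n(\xi)/\xi^2$. I would then fix any Schwartz function $\phi$ with $\int_{\mathbb{R}}|\phi|^2=N$ (which exists since $N>0$) and set, for $\lambda>0$,
\[
v_\lambda(x):=\lambda^{1/2}\phi(\lambda x),\qquad\text{so that}\qquad\int_{\mathbb{R}}|v_\lambda|^2=N\ \text{ for all }\lambda .
\]
Using $\widehat{v_\lambda}(\xi)=\lambda^{-1/2}\widehat{\phi}(\xi/\lambda)$, Plancherel, and the change of variable $\xi=\lambda\zeta$,
\[
\int_{\mathbb{R}}\overline{v_\lambda}\,n(D)v_\lambda=\frac{1}{2\pi}\int_{\mathbb{R}}n(\lambda\zeta)\,|\widehat{\phi}(\zeta)|^2\,d\zeta\le C_s\,\lambda^2\cdot\frac{1}{2\pi}\int_{\mathbb{R}}\zeta^2|\widehat{\phi}(\zeta)|^2\,d\zeta=:C(\phi,s)\,\lambda^2,
\]
which is finite since $\phi\in H^1$, while a direct change of variable gives $\int_{\mathbb{R}}|v_\lambda|^{2s+2}=\lambda^{s}\int_{\mathbb{R}}|\phi|^{2s+2}=:c_0(\phi)\,\lambda^{s}$ with $c_0(\phi)>0$ (since $\phi\not\equiv0$).

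Combining these, $2\mathcal{I}(v_\lambda)\le C(\phi,s)\,\lambda^2-\tfrac{c_0(\phi)}{s+1}\,\lambda^{s}=\lambda^2\big(C(\phi,s)-\tfrac{c_0(\phi)}{s+1}\,\lambda^{s-2}\big)$. Since $1<s<2$ we have $s-2<0$, so $\lambda^{s-2}\to+\infty$ as $\lambda\to0^+$ and the parenthesis becomes negative for all sufficiently small $\lambda>0$; for such $\lambda$, $\mathcal{I}(v_\lambda)<0$, hence $I(N)\le\mathcal{I}(v_\lambda)<0$, which is (\ref{tw10}). The only step requiring any care is the global quadratic bound on $n$; beyond that the argument is a pure scaling computation, and the strict inequality $s<2$ is exactly what makes the subcritical kinetic contribution $O(\lambda^2)$ negligible against the nonlinear one $O(\lambda^{s})$ as $\lambda\to0$. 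Note that only $N>0$ is used here, not the upper bound $N<\langle Q,Q\rangle$.
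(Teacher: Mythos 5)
Your proof is correct and follows essentially the same approach as the paper: bound the symbol $n(\xi)$ by a constant times $\xi^2$, then run the scaling $v_\lambda(x)=\lambda^{1/2}\phi(\lambda x)$ so the quadratic kinetic term is $O(\lambda^2)$ while the potential term is of size $\lambda^s$, and conclude from $s<2$ that the latter wins as $\lambda\to 0^+$. The only difference is that you establish the global bound $0\le n(\xi)\le C_s\xi^2$ on all of $\mathbb{R}$, whereas the paper restricts to test functions with Fourier support in $\{\xi>0\}$ and only bounds $n$ there; your version is a mild simplification of the same argument.
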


\begin{proof}
Let $v\in\mathcal{S}$ be such that $\widehat{v}\left(  \xi\right)  $ is
supported on $B:=\{\xi\in\mathbb{R}$ : $\xi>0\}$ and $\int\left\vert
v\right\vert ^{2}=N.$ We take $\chi\left(  t\right)  $ such that $\chi\left(
t\right)  =1,$ for $0\leq t\leq1$ and $\chi\left(  t\right)  =0,$ for $t>1.$
By Taylor's theorem, for $\xi>0$ we have
\[
\left.
\begin{array}
[c]{c}%
n\left(  \xi\right)  =\chi\left(  \xi\right)  n\left(  \xi\right)  +\left(
1-\chi\left(  \xi\right)  \right)  n\left(  \xi\right) \\
=\chi\left(  \xi\right)  \left(
{\displaystyle\int_{0}^{\xi}}
n^{\prime\prime}\left(  \tau\right)  \left(  \xi-\tau\right)  dt\right)
+\left(  1-\chi\left(  \xi\right)  \right)  n\left(  \xi\right)  .
\end{array}
\right.
\]
Then, we get%
\begin{align*}
&
{\displaystyle\int_{\mathbb{R}}}
n\left(  \xi\right)  \left\vert \hat{v}\left(  \xi\right)  \right\vert
^{2}d\xi\\
&  =%
{\displaystyle\int_{\mathbb{R}}}
\chi\left(  \xi\right)  \left(
{\displaystyle\int\nolimits_{0}^{\xi}}
n^{\prime\prime}\left(  \tau\right)  \left(  \xi-\tau\right)  d\tau\right)
\left\vert \hat{v}\left(  \xi\right)  \right\vert ^{2}d\xi+%
{\displaystyle\int_{\mathbb{R}}}
\left(  1-\chi\left(  \xi\right)  \right)  \left(  \xi+1\right)
^{s}\left\vert \hat{v}\left(  \xi\right)  \right\vert ^{2}d\xi\\
&  -%
{\displaystyle\int_{\mathbb{R}}}
\left(  1-\chi\left(  \xi\right)  \right)  \left(  s\xi+1\right)  \left\vert
\hat{v}\left(  \xi\right)  \right\vert ^{2}d\xi.
\end{align*}
Thus, using that%
\[
\chi\left(  \xi\right)
{\displaystyle\int\nolimits_{0}^{\xi}}
n^{\prime\prime}\left(  \tau\right)  \left(  \xi-\tau\right)  d\tau\leq
\frac{s\left(  s-1\right)  }{2}\xi^{2}%
\]
and$\allowbreak$%
\[
\left(  1-\chi\left(  \xi\right)  \right)  \left(  \xi+1\right)  ^{s}\leq
2^{s}\xi^{2},
\]
for $\xi>0,$ we see that%
\[%
{\displaystyle\int_{\mathbb{R}}}
n\left(  \xi\right)  \left\vert \hat{v}\left(  \xi\right)  \right\vert
^{2}d\xi\leq2^{s+1}\int_{\mathbb{R}}\xi^{2}\left\vert \hat{v}\left(
\xi\right)  \right\vert ^{2}d\xi.
\]
Therefore, we obtain%
\[
\mathcal{I}\left(  v\right)  \leq2^{s}\int_{\mathbb{R}}{\overline{v}}\left(
-\Delta\right)  v-\frac{1}{2s+2}\int_{\mathbb{R}}\left\vert v\right\vert
^{2s+2}.
\]
Let now $w\in\mathcal{S}$ be such that $\widehat{w}\left(  \xi\right)  $ is
supported on $B$ and $\int\left\vert w\right\vert ^{2}=N.$ We take $v\left(
x\right)  =\lambda^{1/2}w\left(  \lambda x\right)  .$ Then,%
\begin{equation}
\mathcal{I}\left(  v\right)  \leq\lambda^{s}\left(  2^{s}\lambda^{2-s}%
\int_{\mathbb{R}}\overline{w}\left(  -\Delta\right)  w-\frac{1}{2s+2}%
\int_{\mathbb{R}}\left\vert w\right\vert ^{2s+2}\right)  . \label{tw112}%
\end{equation}
As $s<2,$ choosing $\lambda>0$ small enough we show that
\begin{equation}
2^{s}\lambda^{2-s}\int_{\mathbb{R}}\overline{w}\left(  -\Delta\right)
w-\frac{1}{2s+2}\int_{\mathbb{R}}\left\vert w\right\vert ^{2s+2}<0.
\label{tw113}%
\end{equation}
Therefore, by (\ref{tw112}) and (\ref{tw113}) we obtain
\[
I\left(  N\right)  \leq\mathcal{I}\left(  v\right)  <0
\]
and (\ref{tw10}) follows.
\end{proof}

In the next lemma we show that $I\left(  N\right)  $ enjoys a strict
sub-additivity condition.

\begin{lemma}
\label{L5}Let $1<s<2$, $0<N<\left\langle Q,Q\right\rangle $ and $0<\alpha<N.$
Then, the following estimate holds%
\begin{equation}
I\left(  N\right)  <I\left(  \alpha\right)  +I\left(  N-\alpha\right)  .
\label{tw18}%
\end{equation}
Moreover, the function $I\left(  N\right)  $ is strictly decreasing and
continuous on $0<N<\left\langle Q,Q\right\rangle .$
\end{lemma}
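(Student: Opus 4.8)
The plan is to follow the classical concentration–compactness strategy for proving strict subadditivity, which relies on two structural features: the scaling behaviour of $\mathcal{I}$ and the fact that $I(N)<0$. The first step is to establish the \emph{homogeneity-type bound}
\begin{equation}
I(\theta N)\leq \theta\, I(N) \quad\text{for all } \theta\geq 1,\ 0<\theta N<\langle Q,Q\rangle,\label{tw18a}
\end{equation}
with strict inequality when $\theta>1$. To see this, given a minimizing sequence $v_n$ for $I(N)$, consider the rescaled functions $w_n(x)=\theta^{1/2}v_n(x)$, so that $\int|w_n|^2=\theta N$. The quadratic term $\int \overline{w_n}n(D)w_n=\theta\int\overline{v_n}n(D)v_n$ scales linearly in $\theta$, while the nonlinear term obeys $\frac{1}{2s+2}\int|w_n|^{2s+2}=\theta^{s+1}\frac{1}{2s+2}\int|v_n|^{2s+2}$. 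Hence
\[
\mathcal{I}(w_n)=\tfrac{\theta}{2}\int\overline{v_n}n(D)v_n-\tfrac{\theta^{s+1}}{2s+2}\int|v_n|^{2s+2}
=\theta\,\mathcal{I}(v_n)-\tfrac{\theta^{s+1}-\theta}{2s+2}\int|v_n|^{2s+2}.
\]
Since $I(N)<0$ and $v_n$ is minimizing, $\int|v_n|^{2s+2}$ is bounded below by a positive constant for large $n$ (otherwise $\mathcal{I}(v_n)\to 0$ by Lemma \ref{L4}, contradicting $I(N)<0$); and $\theta^{s+1}-\theta>0$ for $\theta>1$ since $s>0$. Passing to the limit yields $I(\theta N)\leq \theta I(N)-c(\theta)$ with $c(\theta)>0$, which is \eqref{tw18a} in the strict form.

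The second step derives \eqref{tw18} from \eqref{tw18a}. Without loss of generality take $\alpha\geq N-\alpha$, so $\alpha\in[N/2,N)$. Write $N=\alpha\cdot\frac{N}{\alpha}$ with $\frac{N}{\alpha}>1$, so by the strict version of \eqref{tw18a},
\[
I(N)=I\!\left(\tfrac{N}{\alpha}\cdot\alpha\right)<\tfrac{N}{\alpha}I(\alpha)=I(\alpha)+\tfrac{N-\alpha}{\alpha}I(\alpha).
\]
Similarly $I(\alpha)=I\!\left(\frac{\alpha}{N-\alpha}\cdot(N-\alpha)\right)\leq \frac{\alpha}{N-\alpha}I(N-\alpha)$ (using $\frac{\alpha}{N-\alpha}\geq 1$), so that $\frac{N-\alpha}{\alpha}I(\alpha)\leq I(N-\alpha)$. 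Combining the two displays gives $I(N)<I(\alpha)+I(N-\alpha)$, which is \eqref{tw18}.

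For the monotonicity and continuity claims: strict decrease is immediate from \eqref{tw18a}, since for $0<M<N$ we have $I(N)\leq\frac{N}{M}I(M)<I(M)$ using $I(M)<0$ (for $0<M<\langle Q,Q\rangle$ the bound $I(M)<0$ is exactly the previous lemma; for $M$ in the full admissible range the same scaling argument applies). Continuity follows by a standard perturbation argument: given a near-minimizer $v$ for $I(N)$, rescaling its mass slightly to $N'=N+\epsilon$ via $v\mapsto (N'/N)^{1/2}v$ perturbs $\mathcal{I}$ continuously in $\epsilon$, giving upper semicontinuity; combined with the subadditivity-type bound $I(N)\leq I(N')+I(N-N')$ and $I(N-N')\to 0$ as $N'\to N$ (which in turn follows from Lemma \ref{L4}'s lower bound $I(\delta)\geq -C(\delta)$ and the upper bound $I(\delta)<0$, both squeezing $I(\delta)\to 0$ as $\delta\to 0$), one obtains lower semicontinuity as well.

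The main obstacle I anticipate is the rigorous justification that $\int|v_n|^{2s+2}$ stays bounded away from zero along a minimizing sequence — the quantitative link between $I(N)<0$ and a lower bound on the nonlinear term. This requires combining the Gagliardo–Nirenberg inequality \eqref{tw5} with the coercivity estimate \eqref{tw6} of Lemma \ref{L4}: if the nonlinearity were small, then $\mathcal{I}(v_n)$ would be nonnegative up to a vanishing correction, contradicting $\mathcal{I}(v_n)\to I(N)<0$. Once this is in hand, the scaling computations are routine, and care is only needed to track the endpoint cases $\alpha=N/2$ and to make sure all masses invoked stay strictly below $\langle Q,Q\rangle$ so that the relevant negativity and coercivity statements apply.
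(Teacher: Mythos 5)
Your argument for the strict subadditivity is essentially the paper's: both hinge on first proving the strict homogeneity bound $I(\theta N)<\theta I(N)$ for $\theta>1$, which requires showing that $\int|v_n|^{2s+2}$ stays bounded below along minimizing sequences, and that in turn comes from $I(N)<0$ together with the pointwise bound $n(\xi)\geq 0$. The presentations differ in style rather than substance: the paper factors $I(N)=N\,I_1(N)$ with $I_1$ the infimum over unit-mass profiles (so the homogeneity is read off from $I_1$ being strictly decreasing in $N$) and then cites Lemma II.1 of Lions, whereas you rescale the minimizing sequence directly by $\theta^{1/2}$ and rederive the Lions implication by hand via the case split $\alpha\geq N/2$; that derivation is correct. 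Two small remarks on your write-up. First, the parenthetical ``otherwise $\mathcal{I}(v_n)\to 0$ by Lemma \ref{L4}'' is not quite the right justification; Lemma \ref{L4} only gives a lower bound, and what you actually use is $n(\xi)\geq 0$ so that $\liminf\mathcal{I}(v_n)\geq 0$, which you do state correctly in your final paragraph. Second, the continuity argument is where you genuinely diverge from the paper: the paper observes that $I_1(N)$ is an infimum of functions concave in $N$ (because $N\mapsto -N^s$ is concave for $s>1$), hence concave, hence continuous on the open interval, and then $I(N)=N\,I_1(N)$ is continuous as a product. Your perturbation/squeeze argument can be made to work, but it is more delicate: you need to rescale near-minimizers in both directions, use the uniform $H^{s/2}$-bound from Lemma \ref{L4} (and Sobolev embedding) to control $\int|v|^{2s+2}$ uniformly so the $O(\epsilon)$ constants do not degenerate, and you also need to note that $C(N)$ in Lemma \ref{L4} stays bounded as $N\to 0$ so that $I(\delta)\to 0$. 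The concavity route is cleaner and avoids these bookkeeping issues.
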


\begin{proof}
We follow the proof of Lemma 2.3 of \cite{Frohlich}. Suppose that
$0<N<\left\langle Q,Q\right\rangle .$ Then, by Lemma \ref{L4}, $I\left(
N\right)  $ is finite. Observe that%
\[
I\left(  N\right)  =N\left(  I_{1}\left(  N\right)  \right)  ,
\]
where%
\begin{equation}
I_{1}\left(  N\right)  :=\inf_{v\in H^{s/2}\left(  \mathbb{R}\right)  ,\text{
}\left\Vert v\right\Vert _{L^{2}}^{2}=1}\left\{  \frac{1}{2}\left(
\int_{\mathbb{R}}{\overline v}n\left(  D\right)  v-\frac{N^{s}}{s+1}%
\int_{\mathbb{R}}\left\vert v\right\vert ^{2s+2}\right)  \right\}  .
\label{tw12}%
\end{equation}
We can restrict the infimum in (\ref{tw12}) to elements $v\in H^{s/2}\left(
\mathbb{R}\right)  ,$ $\left\Vert v\right\Vert _{L^{2}}^{2}=1$ such that
\begin{equation}
\int_{\mathbb{R}}\left\vert v\right\vert ^{2s+2}\geq c>0. \label{tw17}%
\end{equation}
Indeed, otherwise there exists a minimizing sequence $\{v_{m}\}_{m=0}^{\infty
}$ such that%
\[
\int_{\mathbb{R}}\left\vert v_{m}\right\vert ^{2s+2}\rightarrow0.
\]
As $n\left(  \xi\right)  \geq0,$ for all $\xi\in\mathbb{R}$, we see that%
\[
\int_{\mathbb{R}}{\overline v}_{m}n\left(  D\right)  v_{m}\geq0,
\]
for all $m\geq1.$ Then,
\[
I\left(  N\right)  =\mathop{\rm lim}_{m\rightarrow\infty}\mathcal{I}\left(
v_{m}\right)  \geq0.
\]
The last relation contradicts (\ref{tw10}). Hence, (\ref{tw17}) holds.
Relation (\ref{tw17}) implies that $I_{1}\left(  N\right)  ,$ as function of
$N,$ is strictly decreasing. Then,%
\[
I\left(  \theta N\right)  =\theta N\left(  I_{1}\left(  \theta N\right)
\right)  <\theta N\left(  I_{1}\left(  N\right)  \right)  =\theta I\left(
N\right)  ,
\]
for $\theta>1.$ Thus, (\ref{tw18}) follows from Lemma II.1 of \cite{Lions}.
Now, as $I_{1}\left(  N\right)  $ is strictly decreasing and $I_{1}\left(
N\right)  <0,$ by (\ref{tw10}), for $0<N<N_{1}<\left\langle Q,Q\right\rangle ,
$ we have
\[
I\left(  N_{1}\right)  =N_{1}\left(  I_{1}\left(  N_{1}\right)  \right)
<N_{1}\left(  I_{1}\left(  N\right)  \right)  <N\left(  I_{1}\left(  N\right)
\right)  =I\left(  N\right)  ,
\]
and hence, $I\left(  N\right)  $ is strictly decreasing. Since $N^{s}$ is
convex for $s>1,$ $I_{1}\left(  N\right)  $ must be concave on
$0<N<\left\langle Q,Q\right\rangle $, and hence $I_{1}\left(  N\right)  \in
C\left(  \left(  0,\left\langle Q,Q\right\rangle \right)  \right)  .$
Therefore, it follows that $I\left(  N\right)  $ is continuous on
$0<N<\left\langle Q,Q\right\rangle .$
\end{proof}

We define the functional
\begin{equation}
\mathcal{L}\left(  u\right)  :=\int_{\mathbb{R}}\overline{u}n\left(  D\right)
u+\int_{\mathbb{R}}\left\vert u\right\vert ^{2}. \label{tw26}%
\end{equation}
We need now the following profile decomposition result for a bounded sequence
in $H^{s/2}\left(  \mathbb{R}\right)  .$

\begin{lemma}
\label{L2}Let $\left\{  u_{n}\right\}  _{n=1}^{\infty}$ be a bounded sequence
in $H^{s/2}\left(  \mathbb{R}\right)  ,$ $1<s<2.$ Then, there exist a
subsequence of $\left\{  u_{n}\right\}  _{n=1}^{\infty}$ (still denoted
$\left\{  u_{n}\right\}  _{n=1}^{\infty}$), a family $\{\mathbf{x}^{j}%
\}_{j=1}^{\infty}$ of sequences in $\mathbb{R},$ with%
\begin{equation}
\left\vert x_{n}^{k}-x_{n}^{j}\right\vert \rightarrow+\infty,\text{ as
}n\rightarrow\infty,\text{ for }k\neq j, \label{tw27}%
\end{equation}
and a sequence $\left\{  V^{j}\right\}  _{j=1}^{\infty}$ of $H^{s/2}\left(
\mathbb{R}\right)  $ functions, such that for every $l\geq1$ and every
$x\in\mathbb{R}$,%
\begin{equation}
u_{n}\left(  x\right)  =\sum_{j=1}^{l}V^{j}\left(  x-x_{n}^{j}\right)
+u_{n}^{l}\left(  x\right)  , \label{tw19}%
\end{equation}
where the series $\sum_{j=1}^{\infty}\left\Vert V^{j}\right\Vert
_{H^{s/2}\left(  \mathbb{R}\right)  }^{2}$ converges and
\begin{equation}
\limsup_{n\rightarrow\infty}\left\Vert u_{n}^{l}\left(  x\right)  \right\Vert
_{L^{p}\left(  \mathbb{R}\right)  }\rightarrow0,\text{ \ as }l\rightarrow
\infty, \label{tw28}%
\end{equation}
for every $p>2.$ Moreover, as $n\rightarrow\infty,$%
\begin{equation}
\left\Vert u_{n}\right\Vert _{L^{2}}^{2}=\sum_{j=1}^{l}\left\Vert
V^{j}\right\Vert _{L^{2}}^{2}+\left\Vert u_{n}^{l}\right\Vert _{L^{2}}%
^{2}+o\left(  1\right)  , \label{tw20}%
\end{equation}%
\begin{equation}
\left\Vert \left\vert \nabla\right\vert ^{s/2}u_{n}\right\Vert _{L^{2}}%
^{2}=\sum_{j=1}^{l}\left\Vert \left\vert \nabla\right\vert ^{s/2}%
V^{j}\right\Vert _{L^{2}}^{2}+\left\Vert \left\vert \nabla\right\vert
^{s/2}u_{n}^{l}\right\Vert _{L^{2}}^{2}+o\left(  1\right)  , \label{tw21}%
\end{equation}
and%
\begin{equation}
\mathcal{L}\left(  u_{n}\right)  =\sum_{j=1}^{l}\mathcal{L}\left(
V^{j}\right)  +\mathcal{L}\left(  u_{n}^{l}\right)  +o\left(  1\right)  .
\label{tw22}%
\end{equation}

\end{lemma}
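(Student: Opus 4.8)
The plan is to prove Lemma \ref{L2} by the standard iterative profile-extraction (concentration--compactness) scheme, after two preliminary observations. First, the quadratic form $\mathcal L$ of \eqref{tw26} is comparable to $\|\cdot\|_{H^{s/2}}^2$: since $n(\xi)\ge0$ everywhere and, by \eqref{tw14} with $A=0$, $n(\xi)\ge\tfrac12|\xi|^s-C$, the Fourier symbol obeys $c(1+|\xi|^s)\le n(\xi)+1\le C(1+|\xi|^s)$; moreover $\mathcal L$ is translation invariant, nonnegative, and $\mathcal L(u)\ge\|u\|_{L^2}^2$. Second, fix an exponent $p_0\in(2,\infty)$. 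A Gagliardo--Nirenberg estimate on unit intervals combined with a finite-overlap covering of $\mathbb R$ yields a refined inequality $\|u\|_{L^{p_0}}\le C(M)\,\eta(u)^{\kappa}$ whenever $\|u\|_{H^{s/2}}\le M$, where $\eta(u):=\sup_{y\in\mathbb R}\|u\|_{L^2(B_1(y))}$ and $\kappa>0$; together with the compact embedding $H^{s/2}(B_1)\hookrightarrow\hookrightarrow L^2(B_1)$ this gives the \emph{extraction step}: if $\{w_n\}$ is bounded in $H^{s/2}$ with $\limsup_n\|w_n\|_{L^{p_0}}=\delta>0$, then along a subsequence there exist $y_n\in\mathbb R$ with $w_n(\cdot+y_n)\rightharpoonup V$ in $H^{s/2}$ and $\|V\|_{L^2}^2\ge\gamma(\delta)$, for a continuous increasing function $\gamma$ with $\gamma(0)=0$.

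The algorithm: set $u_n^0:=u_n$; given $u_n^{l-1}$, put $\delta_{l-1}:=\limsup_n\|u_n^{l-1}\|_{L^{p_0}}$, and stop if $\delta_{l-1}=0$; otherwise apply the extraction step to produce $x_n^l\in\mathbb R$, a subsequence, and a profile $V^l\ne0$ with $u_n^{l-1}(\cdot+x_n^l)\rightharpoonup V^l$ and $\|V^l\|_{L^2}^2\ge\gamma(\delta_{l-1})$, and set $u_n^l:=u_n^{l-1}-V^l(\cdot-x_n^l)=u_n-\sum_{j=1}^l V^j(\cdot-x_n^j)$, which is \eqref{tw19}. Passing to a further subsequence at each step and diagonalizing, all assertions hold along one subsequence. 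The structural facts are proved by simultaneous induction on $l$. (i) For all $j\le l$, $u_n^l(\cdot+x_n^j)\rightharpoonup0$ in $H^{s/2}$: for $j=l$ this is immediate since $u_n^l(\cdot+x_n^l)=u_n^{l-1}(\cdot+x_n^l)-V^l$ converges weakly to $0$; it propagates to larger indices once \eqref{tw27} is known, because then $V^k(\cdot+x_n^j-x_n^k)\rightharpoonup0$ for $k>j$ (translating a fixed $H^{s/2}$ function to infinity). (ii) Orthogonality of the cores \eqref{tw27}: if $x_n^l-x_n^j$ had a finite accumulation point $a$ for some $j<l$, then for every $\varphi\in H^{s/2}$, $\langle u_n^{l-1}(\cdot+x_n^l),\varphi\rangle_{H^{s/2}}=\langle u_n^{l-1}(\cdot+x_n^j),\varphi(\cdot-(x_n^l-x_n^j))\rangle_{H^{s/2}}\to0$, using $u_n^{l-1}(\cdot+x_n^j)\rightharpoonup0$ (hypothesis (i)) and the strong continuity of translations on $H^{s/2}$; this contradicts $V^l\ne0$, so $|x_n^l-x_n^j|\to\infty$.

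Next I would establish the Pythagorean expansions \eqref{tw20}, \eqref{tw21}, \eqref{tw22}, which are the same computation carried out for a nonnegative translation-invariant quadratic form $B$, continuous on $H^{s/2}$ by the symbol bound above, taken successively equal to $\|\cdot\|_{L^2}^2$, to $\|\,|\nabla|^{s/2}\cdot\,\|_{L^2}^2$, and to $\mathcal L$. Writing $u_n=\sum_{j\le l}V^j(\cdot-x_n^j)+u_n^l$ and expanding, the cross terms between $u_n^l$ and each $V^j(\cdot-x_n^j)$ vanish because $B(V^j(\cdot-x_n^j),u_n^l)=B(V^j,u_n^l(\cdot+x_n^j))\to0$ by (i) (weak convergence against a fixed vector kills the continuous bilinear form), while the cross terms among distinct profiles vanish by \eqref{tw27}: approximating $V^j,V^k$ by Schwartz functions, $B(V^j(\cdot-x_n^j),V^k(\cdot-x_n^k))=B(V^j(\cdot-(x_n^j-x_n^k)),V^k)\to0$ by disjointness of supports when $B=\|\cdot\|_{L^2}^2$ and by the Riemann--Lebesgue lemma when $B$ is the multiplier $|\nabla|^s$ or $n(D)+1$. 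From \eqref{tw22}, $\sum_{j=1}^l\mathcal L(V^j)=\limsup_n\big(\mathcal L(u_n)-\mathcal L(u_n^l)\big)\le\limsup_n\mathcal L(u_n)=:\mathcal L_0<\infty$ for every $l$, so $\sum_j\mathcal L(V^j)<\infty$ and hence $\sum_j\|V^j\|_{H^{s/2}}^2<\infty$ (the convergence claimed in the lemma); the same identity gives $\limsup_n\mathcal L(u_n^l)\le\mathcal L_0$, hence $\limsup_n\|u_n^l\|_{H^{s/2}}\le M$ with $M$ independent of $l$, which is what legitimizes applying the extraction step at every stage with a fixed $M$.

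Finally, \eqref{tw28}: from \eqref{tw20} and $\|V^l\|_{L^2}^2\ge\gamma(\delta_{l-1})$, $\sum_l\gamma(\delta_{l-1})\le\sum_l\|V^l\|_{L^2}^2\le\limsup_n\|u_n\|_{L^2}^2<\infty$, so $\gamma(\delta_{l-1})\to0$, whence $\delta_{l-1}=\limsup_n\|u_n^l\|_{L^{p_0}}\to0$ as $l\to\infty$; since the $u_n^l$ are bounded in every $L^p$, $2\le p\le\infty$ (as $s>1$ gives $H^{s/2}\hookrightarrow L^\infty$), uniformly in $n$ and $l$, interpolation against the $L^{p_0}$-norm upgrades this to $\limsup_n\|u_n^l\|_{L^p}\to0$ for every $p>2$. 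I expect the main obstacle to be exactly the quantitative extraction step --- establishing the refined bound $\|u\|_{L^{p_0}}\lesssim_M\eta(u)^\kappa$ and producing from a non-vanishing $L^{p_0}$-norm a translated weak limit with the \emph{strictly positive} lower bound $\|V^l\|_{L^2}^2\ge\gamma(\delta_{l-1})$; once that quantitative gain is secured, the remainder is soft manipulation of weak limits and the orthogonality relation \eqref{tw27}.
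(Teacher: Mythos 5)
Your proposal is correct and reproduces, in detail, the Hmidi--Keraani profile decomposition argument (iterated extraction via a refined Gagliardo--Nirenberg inequality, core orthogonality, and Pythagorean expansion for translation-invariant quadratic forms) that the paper simply cites for the $H^1$ case and declares to carry over to $H^{s/2}$. The one ingredient the paper does spell out --- that $c(1+|\xi|^s)\le n(\xi)+1\le C(1+|\xi|^s)$ makes $\sqrt{\mathcal L(\cdot)}$ an equivalent $H^{s/2}$ norm, so \eqref{tw22} follows by the same computation as \eqref{tw20}--\eqref{tw21} --- is exactly the observation you open with, so the two routes coincide.
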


\begin{proof}
See the proof of Proposition 2.1 of \cite{HmidiKeraani} for the case of
bounded sequence in $H^{1}\left(  \mathbb{R}\right)  .$ The proof for the
fractional case $H^{s/2}\left(  \mathbb{R}\right)  ,$ $1<s<2,$ is analogous.
We only make a comment on (\ref{tw22}). Since $n\left(  \xi\right)  \leq
C\left(  \left\vert \xi\right\vert ^{s}+1\right)  ,$ using (\ref{tw14}) with
$A=0,$ we deduce
\[
c\left(  \left\vert \xi\right\vert ^{s}+1\right)  \leq n\left(  \xi\right)
+1\leq C\left(  \left\vert \xi\right\vert ^{s}+1\right)  .
\]
for some constants $0<c\leq C.$ Therefore, the norm $\left\Vert u\right\Vert
_{\mathcal{L}}:=\sqrt{\mathcal{L}\left(  u\right)  }$ is equivalent to the
$H^{s/2}\left(  \mathbb{R}\right)  $ norm. Hence, relation (\ref{tw22}) is
obtained similarly to (\ref{tw20}) and (\ref{tw21}).
\end{proof}

\begin{lemma}
\label{L3}The functional $\mathcal{L}\left(  u\right)  ,$ defined by
(\ref{tw26}), is weakly semicontinuos in $H^{s/2}\left(  \mathbb{R}\right)  .$
That is, if $u_{n}\rightharpoonup u$ weakly in $H^{s/2}\left(  \mathbb{R}%
\right)  $, as $n\rightarrow\infty,$
\[
\liminf_{n\rightarrow\infty}\mathcal{L}\left(  u_{n}\right)  \geq
\mathcal{L}\left(  u\right)  .
\]
Moreover, if $\mathop{\rm lim}_{n\rightarrow\infty}\mathcal{L}\left(
u_{n}\right)  =\mathcal{L}\left(  u\right)  $ is valid, $\{u_{n}%
\}_{n=0}^{\infty}$ converges strongly to $u$ in $H^{s/2}\left(  \mathbb{R}%
\right)  .$
\end{lemma}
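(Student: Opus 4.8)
The plan is to prove the two assertions of Lemma \ref{L3} — weak lower semicontinuity of $\mathcal L$ and the fact that convergence of the functional values upgrades weak convergence to strong convergence — by exploiting the equivalence of $\|\cdot\|_{\mathcal L}$ with the $H^{s/2}$ norm established in Lemma \ref{L2}, together with the Fourier-multiplier structure of $n(D)$.

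First I would record that $n(\xi)+1$ is, up to constants, comparable to $|\xi|^s+1$; this was shown in the proof of Lemma \ref{L2} and it tells us that $\mathcal L(u)=\int (n(\xi)+1)|\hat u(\xi)|^2\,d\xi$ is a (squared) norm equivalent to $\|u\|_{H^{s/2}}^2$. For weak lower semicontinuity the key point is that $\mathcal L$ is the squared norm of a Hilbert-space structure on $H^{s/2}$: namely the inner product $\langle u,v\rangle_{\mathcal L}:=\int (n(\xi)+1)\,\hat u\,\overline{\hat v}\,d\xi$ induces the same topology as $H^{s/2}$, so weak convergence $u_n\rightharpoonup u$ in $H^{s/2}$ is the same as weak convergence in the Hilbert space $(H^{s/2},\langle\cdot,\cdot\rangle_{\mathcal L})$. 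In any Hilbert space the norm is weakly lower semicontinuous: writing $\mathcal L(u_n)=\|u_n\|_{\mathcal L}^2$ and using $\|u\|_{\mathcal L}^2=\langle u,u_n\rangle_{\mathcal L}+\langle u,u-u_n\rangle_{\mathcal L}$ together with Cauchy--Schwarz, $\langle u,u_n\rangle_{\mathcal L}\to\|u\|_{\mathcal L}^2$ and hence $\liminf_n\|u_n\|_{\mathcal L}\ge\|u\|_{\mathcal L}$, giving $\liminf_n \mathcal L(u_n)\ge\mathcal L(u)$. (Equivalently, Plancherel turns this into a weak lower semicontinuity statement for $L^2(\RR,(n(\xi)+1)d\xi)$, since positivity of the weight makes $\sqrt{n(\xi)+1}\,\hat u_n$ converge weakly in $L^2(d\xi)$ to $\sqrt{n(\xi)+1}\,\hat u$.)

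For the second assertion, assume in addition that $\mathcal L(u_n)\to\mathcal L(u)$, i.e. $\|u_n\|_{\mathcal L}\to\|u\|_{\mathcal L}$. In a Hilbert space, weak convergence plus convergence of norms implies strong convergence: expand
\[
\|u_n-u\|_{\mathcal L}^2=\|u_n\|_{\mathcal L}^2-2\,\real\langle u_n,u\rangle_{\mathcal L}+\|u\|_{\mathcal L}^2,
\]
and let $n\to\infty$; the first term tends to $\|u\|_{\mathcal L}^2$ by hypothesis, the middle term tends to $2\|u\|_{\mathcal L}^2$ by weak convergence, so the right-hand side tends to $0$. Thus $u_n\to u$ in $\|\cdot\|_{\mathcal L}$, and by the norm equivalence $u_n\to u$ strongly in $H^{s/2}(\RR)$.

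I do not expect a serious obstacle here: the only thing one must be careful about is that $n(D)$ is not positive enough on its own to define a norm — $n(\xi)$ vanishes at $\xi=0$ and only grows like $|\xi|^s$, not like $\langle\xi\rangle^s$ — which is precisely why the lemma is stated for the shifted functional $\mathcal L(u)=\int\overline u\,n(D)u+\int|u|^2$ rather than for $\int\overline u\,n(D)u$ alone. Once the weight $n(\xi)+1\sim\langle\xi\rangle^s$ comparison from Lemma \ref{L2} is invoked, everything reduces to the standard Hilbert-space facts (norm is weakly l.s.c.; weak convergence $+$ norm convergence $\Rightarrow$ strong convergence), applied in the equivalent inner-product space $(H^{s/2},\langle\cdot,\cdot\rangle_{\mathcal L})$. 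The nonnegativity $n(\xi)\ge0$ for all $\xi\in\RR$, already used repeatedly above, guarantees the weight is $\ge1$ so that $\langle\cdot,\cdot\rangle_{\mathcal L}$ is genuinely positive definite.
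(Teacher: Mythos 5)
Your proof is correct and takes essentially the same route as the paper: both recognize that $\mathcal L$ is (the square of) a Hilbert norm equivalent to the $H^{s/2}$ norm, via the two-sided bound $c(|\xi|^s+1)\le n(\xi)+1\le C(|\xi|^s+1)$ from the proof of Lemma~\ref{L2}, after which both assertions are standard Hilbert-space facts. The only difference is that the paper, after invoking Plancherel to view $\|\cdot\|_{\mathcal L}$ as a weighted $L^2$ norm, simply cites Theorem~2.11 of Lieb--Loss, whereas you carry out the elementary Cauchy--Schwarz and norm-expansion arguments yourself; this is a matter of presentation, not of substance.
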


\begin{proof}
As noted in the proof of Lemma \ref{L2}, the norm $\left\Vert u\right\Vert
_{\mathcal{L}}=\sqrt{\mathcal{L}\left(  u\right)  }$ is equivalent to the
$H^{s/2}\left(  \mathbb{R}\right)  $ norm. By Plancharel theorem, the norm
$\left\Vert u\right\Vert _{\mathcal{L}}$ is equivalent to a weighted $L^{2}%
$-norm. Then, the result of Lemma \ref{L3} follows from Theorem 2.11 of
\cite{LiebLoss}.
\end{proof}

Now we have all the ingredients to prove Theorem \ref{T1}.

\begin{proof}
[Proof of Theorem \ref{T1}]Suppose that $0<N<\left\langle Q,Q\right\rangle ,$
where $Q$ is an optimizer of (\ref{tw5})$.$ Let $\{u_{n}\}_{n=0}^{\infty}$ be
a minimizing sequence for (\ref{tw42}). Then,
\begin{equation}
\mathop{\rm lim}_{n\rightarrow\infty}\mathcal{I}\left(  u_{n}\right)
=I\left(  N\right)  ,\text{ with }u_{n}\in H^{s/2}\left(  \mathbb{R}\right)
\text{ and }\int_{\mathbb{R}}\left\vert u_{n}\right\vert ^{2}=N,\text{ for all
}n\geq0. \label{tw32}%
\end{equation}
Since by Lemma \ref{L4} $\{u_{n}\}_{n=0}^{\infty}$ is bounded in
$H^{s/2}\left(  \mathbb{R}\right)  ,$ it follows from Lemma \ref{L2} that
there exist a subsequence of $\{u_{n}\}_{n=0}^{\infty}$ that we still denote
by $\{u_{n}\}_{n=0}^{\infty}$ and a sequence $\left\{  V^{j}\right\}
_{j=1}^{\infty}$ of $H^{s/2}\left(  \mathbb{R}\right)  $ functions, such that
for every $l\geq1$ and $x\in\mathbb{R}$, (\ref{tw19}) holds. Note that as
$\int_{\mathbb{R}}\left\vert u_{n}\right\vert ^{2}=N,$ by (\ref{tw20})
$\sum_{j=1}^{\infty}\left\Vert V^{j}\right\Vert _{L^{2}}\leq N.$ Moreover,
$\sum_{j=1}^{\infty}\left\Vert V^{j}\right\Vert _{L^{2}}$ is strictly
positive. Otherwise, $V^{j}\equiv0,$ for all $j\in\mathbb{N}$ and then,
(\ref{tw19}) would imply that $\left\Vert u_{n}\right\Vert _{L^{p}}%
\rightarrow0,$ as $n\rightarrow\infty,$ for all $p>2.$ This contradicts
(\ref{tw10}), by an argument similar to the proof of (\ref{tw17}).

Suppose that $\sum_{j=1}^{\infty}\left\Vert V^{j}\right\Vert _{L^{2}}=\alpha,$
for some $0<\alpha<N,$ and define%
\[
V_{n}\left(  x\right)  :=\sum_{j=1}^{l}V^{j}\left(  x-x_{n}^{j}\right)
\]
and%
\[
W_{n}\left(  x\right)  :=u_{n}^{l}\left(  x\right)  .
\]
Then, by (\ref{tw19}),%
\[
u_{n}\left(  x\right)  =V_{n}\left(  x\right)  +W_{n}\left(  x\right)  .
\]
Since
\begin{equation}
\left\Vert V_{n}\left(  x\right)  \right\Vert ^{2}=\sum_{j=1}^{l}\left\Vert
V^{j}\right\Vert ^{2}+\sum_{j,k=1,\text{ }j\neq k}^{l}\left(  V^{j}\left(
x-x_{n}^{j}\right)  ,V^{k}\left(  x-x_{n}^{k}\right)  \right)  \label{tw33}%
\end{equation}
and%
\[
\sum_{j,k=1,\text{ }j\neq k}^{l}\left(  V^{j}\left(  x-x_{n}^{j}\right)
,V^{k}\left(  x-x_{n}^{k}\right)  \right)  =o\left(  1\right)  ,\text{ as
}n\rightarrow\infty,
\]
due to the orthogonality property (\ref{tw27}), we have
\begin{equation}
\left\Vert V_{n}\left(  x\right)  \right\Vert ^{2}=\sum_{j=1}^{l}\left\Vert
V^{j}\right\Vert ^{2}+o\left(  1\right)  ,\text{ as }n\rightarrow\infty.
\label{tw34}%
\end{equation}
Thus, as $\sum_{j=1}^{\infty}\left\Vert V^{j}\right\Vert _{L^{2}}=\alpha,$
from (\ref{tw20}) we deduce that for a given $\varepsilon>0,$ there is some
$l,n_{0}\geq0,$ such that for all $n\geq n_{0},$%
\begin{equation}
\left\vert \left\Vert V_{n}\left(  x\right)  \right\Vert ^{2}-\alpha
\right\vert \leq\varepsilon\text{ and }\left\vert \left\Vert W_{n}\right\Vert
^{2}-\left(  N-\alpha\right)  \right\vert \leq\varepsilon. \label{tw36}%
\end{equation}
Using that $\left(  \left\vert a\right\vert +\left\vert b\right\vert \right)
^{p}\leq C\left(  \left\vert a\right\vert ^{p-1}\left\vert b\right\vert
+\left\vert a\right\vert \left\vert b\right\vert ^{p-1}\right)  ,$ we have%
\begin{align*}
&  \left\vert \left\vert V_{n}\left(  x\right)  +W_{n}\left(  x\right)
\right\vert ^{p}-\left\vert V_{n}\left(  x\right)  \right\vert ^{p}-\left\vert
W_{n}\left(  x\right)  \right\vert ^{p}\right\vert \\
&  \leq C\left(  \left\vert V_{n}\left(  x\right)  \right\vert ^{p-1}%
\left\vert W_{n}\left(  x\right)  \right\vert +\left\vert V_{n}\left(
x\right)  \right\vert \left\vert W_{n}\left(  x\right)  \right\vert
^{p-1}\right)  .
\end{align*}
Then, for some $l$ and all $n$ sufficiently large, by (\ref{tw27}) and
(\ref{tw28}) we deduce
\begin{equation}
\left\Vert u_{n}\right\Vert _{L^{p}}^{p}\leq\left\Vert V_{n}\left(  x\right)
\right\Vert _{L^{p}}^{p}+\left\Vert W_{n}\left(  x\right)  \right\Vert
_{L^{p}}^{p}+\delta_{p}\left(  \varepsilon\right)  , \label{tw30}%
\end{equation}
with $\delta_{p}\left(  \varepsilon\right)  \rightarrow0,$ as $\varepsilon
\rightarrow0.$ Similarly to (\ref{tw34}) we show that%
\[
\mathcal{L}\left(  V_{n}\right)  =\sum_{j=1}^{l}\mathcal{L}\left(
V^{j}\right)  +o\left(  1\right)  ,\text{ as }n\rightarrow\infty.
\]
Therefore, from (\ref{tw22}) we see that%
\begin{equation}
\mathcal{L}\left(  u_{n}\right)  =\mathcal{L}\left(  V_{n}\right)
+\mathcal{L}\left(  W_{n}\right)  +o\left(  1\right)  ,\text{ as }%
n\rightarrow\infty. \label{tw35}%
\end{equation}
Using (\ref{tw30}) and (\ref{tw35}) we obtain
\begin{equation}
I\left(  N\right)  =\mathop{\rm lim}_{n\rightarrow\infty}\mathcal{I}\left(
u_{n}\right)  \geq\liminf_{n\rightarrow\infty}\mathcal{I}\left(  V_{n}\right)
+\liminf_{n\rightarrow\infty}\mathcal{I}\left(  W_{n}\right)  -\delta
_{p}\left(  \varepsilon\right)  . \label{tw24}%
\end{equation}
Then, as $I\left(  N\right)  $ is strictly decreasing by Lemma \ref{L5}, using
(\ref{tw36}), from (\ref{tw24}) we deduce that
\begin{equation}
I\left(  N\right)  \geq I\left(  \alpha+\varepsilon\right)  +I\left(  \left(
N-\alpha\right)  +\varepsilon\right)  -\delta_{p}\left(  \varepsilon\right)  .
\label{tw25}%
\end{equation}
By Lemma \ref{L5}, $I\left(  N\right)  $ is continuous on $0<N<\left\langle
Q,Q\right\rangle $. Then, taking the limit as $\varepsilon\rightarrow0$ in
(\ref{tw25}) we see that
\begin{equation}
I\left(  N\right)  \geq I\left(  \alpha\right)  +I\left(  N-\alpha\right)  ,
\label{tw37}%
\end{equation}
for some $\alpha\in\left(  0,N\right)  .$ This contradicts (\ref{tw18}).
Therefore, we get $\sum_{j=1}^{\infty}\left\Vert V^{j}\right\Vert _{L^{2}}=N.$

Suppose that $\sum_{j=1}^{\infty}\left\Vert V^{j}\right\Vert _{L^{2}}=N$ and
that there are at least two functions $V^{j^{0}}$ and $V^{k}$, for some
$j^{0},k\geq0,$ that are not identically $0.$ Let $\alpha=\left\Vert V^{j^{0}%
}\right\Vert _{L^{2}}^{2}.$ By assumption, $0<\alpha<N.$ We define%
\[
\tilde{V}_{n}\left(  x\right)  :=V^{j^{0}}\left(  x-x_{n}^{j^{0}}\right)
\]
and
\[
\tilde{W}_{n}\left(  x\right)  :=\sum_{j=1,\text{ }j\neq j^{0}}^{l}%
V^{j}\left(  x-x_{n}^{j}\right)  +u_{n}^{l}\left(  x\right)  .
\]
Note that by (\ref{tw20}) $\limsup_{n\rightarrow\infty}\left\Vert u_{n}%
^{l}\right\Vert _{L^{2}}^{2}\rightarrow0,$ as $l\rightarrow\infty.$ Then,
similarly to (\ref{tw36}) we have%
\[
\left\vert \left\Vert \tilde{V}_{n}\left(  x\right)  \right\Vert ^{2}%
-\alpha\right\vert \leq\varepsilon\text{ and }\left\vert \left\Vert \tilde
{W}_{n}\right\Vert ^{2}-\left(  N-\alpha\right)  \right\vert \leq\varepsilon.
\]
Then, arguing as in the case when $\sum_{j=1}^{\infty}\left\Vert
V^{j}\right\Vert _{L^{2}}=\alpha,$ for some $0<\alpha<N,$ we arrive to
(\ref{tw37}), which contradicts (\ref{tw18}).

By discussion so far, the sum $\sum_{j=1}^{\infty}\left\Vert V^{j}\right\Vert
_{L^{2}}=\left\Vert V^{j^{0}}\right\Vert _{L^{2}}=N,$ for some $j^{0}\geq1.$
Then, Lemma \ref{L2} implies that there exist a subsequence of $\{u_{n}%
\}_{n=0}^{\infty}$ (still denoted by $\{u_{n}\}_{n=0}^{\infty}$) and a
sequence of real numbers $\{x_{n}\}_{n=0}^{\infty},$ such that $\tilde{u}%
_{n}:=u_{n}\left(  \cdot+x_{n}\right)  $ converges strongly in $L^{p}\left(
\mathbb{R}\right)  ,$ $p\geq2,$ to $S_{N}:=V^{j^{0}},$ as $n\rightarrow
\infty.$ Therefore, by Lemma \ref{L3} we have
\begin{equation}
I\left(  N\right)  =\mathop{\rm lim}_{n\rightarrow\infty}\mathcal{I}\left(
\tilde{u}_{n}\right)  \geq\mathcal{I}\left(  S_{N}\right)  \geq I\left(
N\right)  . \label{tw38}%
\end{equation}
The last relation implies that $S_{N}\in H^{s/2}\left(  \mathbb{R}\right)  $
is a minimizer for (\ref{tw4}). Finally, we note that by (\ref{tw38}),
$\mathop{\rm lim}_{n\rightarrow\infty}\mathcal{L}\left(  \tilde{u}_{n}\right)
=\mathcal{L}\left(  S_{N}\right)  .$ Then, Lemma \ref{L3} shows that in fact
$\tilde{u}_{n}$ converges strongly to $S_{N}$ in $H^{s/2}\left(
\mathbb{R}\right)  ,$ as $n\rightarrow\infty.$
\end{proof}

\section{Small-mass behavior
of traveling waves.}

We now investigate the structure of small mass solitary waves. Consider the
minimization problem%

\begin{equation}
I_{0}=\inf\left\{  \mathcal{I}_{0}\left(  v\right)  :v\in H^{1}\left(
\mathbb{R}\right)  \text{ and }\int\left\vert v\right\vert ^{2}=s_{0}\right\}
, \label{tw55}%
\end{equation}
where
\[
s_{0}:=\left(  \frac{s\left(  s-1\right)  }{2}\right)  ^{-\frac{1}{s}}%
\]
and%
\[
\mathcal{I}_{0}\left(  v\right)  =\frac{1}{2}%
{\displaystyle\int\limits_{\mathbb{R}}}
\overline{v}\left\vert D\right\vert ^{2}v-\dfrac{1}{2s+2}%
{\displaystyle\int\limits_{\mathbb{R}}}
\left\vert v\right\vert ^{2s+2}.
\]
We now formulate a result on existence and characterization of minimizers for
problem (\ref{tw55}) (see Chapter 8 of \cite{Cazenave}).

\begin{proposition}
\label{P1}There exists a real, positive and radially symmetric function
$\mathcal{R}\in H^{1}$ such that:

i) The set of minimizers of (\ref{tw55}) is characterized by the family
$e^{i\gamma_{0}}\mathcal{R}\left(  x-x_{0}\right)  ,$ $x_{0},\gamma_{0}%
\in\mathbb{R}$.

ii) For any sequence $\{v_{n}\}_{n=0}^{\infty}\in H^{1},$ such that
$\left\Vert v_{n}\right\Vert \rightarrow s_{0}^{1/2}$ and $\mathcal{I}%
_{0}\left(  v_{n}\right)  \rightarrow I_{0},$ as $n\rightarrow\infty,$ there
exist $x_{n},\gamma_{n}\in\mathbb{R}$ and a strictly increasing sequence
$\phi:\mathbb{N\rightarrow}\mathbb{N}$, with the property:%
\[
e^{i\gamma_{\phi\left(  n\right)  }}v_{\phi\left(  n\right)  }\left(
\cdot+x_{\phi\left(  n\right)  }\right)  \rightarrow\mathcal{R},\text{ in
}H^{1}.
\]

iii) $\mathcal{R}\in H^{1}$ is the unique positive, radial symmetric solution
of (\ref{tw75}).
\end{proposition}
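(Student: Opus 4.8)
The assertions of Proposition \ref{P1} are the classical existence, uniqueness and variational characterization of the one-dimensional focusing ground state (Chapter 8 of \cite{Cazenave}); the plan is to run the concentration--compactness argument of Section 2 with $n(D)$ replaced by $|D|^2$, and then to identify the limiting profile through the ODE uniqueness theory in one dimension. First, the sharp Gagliardo--Nirenberg inequality $\int_{\mathbb R}|v|^{2s+2}\le C\|v_x\|_{L^2}^{s}\|v\|_{L^2}^{s+2}$ shows that on the sphere $\int|v|^2=s_0$ one has $\mathcal I_0(v)\ge \tfrac12\|v_x\|_{L^2}^2-C(s_0)\|v_x\|_{L^2}^{s}$; since $s<2$ this is bounded below and forces every minimizing sequence to be bounded in $H^1(\mathbb R)$. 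Testing $\mathcal I_0$ on the rescaled functions $\lambda^{1/2}w(\lambda\,\cdot)$ with $\|w\|_{L^2}^2=s_0$ and letting $\lambda\to0$ gives $I_0<0$ (the standard mass-subcritical scaling, here even simpler than the proof of (\ref{tw10}) since $|D|^2$ is exactly homogeneous).

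Next, denoting by $I_0(N)$ the infimum in (\ref{tw55}) with the constraint $\int|v|^2=N$, I would reproduce Lemma \ref{L5}: since a minimizing sequence may be chosen with $\int|v|^{2s+2}$ bounded below (otherwise $I_0(N)\ge0$, contradicting $I_0<0$, as in (\ref{tw17})), the map $N\mapsto N^{-1}I_0(N)$ is strictly decreasing, whence the strict subadditivity $I_0(N)<I_0(\alpha)+I_0(N-\alpha)$ for $0<\alpha<N$. Applying the profile decomposition of Lemma \ref{L2} (valid verbatim for $|D|^2$, whose quadratic form $\int\overline u|D|^2u+\int|u|^2$ is exactly the $H^1$ norm) together with the weak lower semicontinuity of Lemma \ref{L3}, one rules out vanishing (which would contradict $I_0<0$) and dichotomy (which would contradict strict subadditivity), so any minimizing sequence converges, after translation and a constant phase and along a subsequence, strongly in $H^1$ to a minimizer. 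This proves assertion ii) and the existence of a minimizer $\mathcal R$.

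Any minimizer $\mathcal R_\ast$ solves the Euler--Lagrange equation $-\Delta\mathcal R_\ast+\mu\mathcal R_\ast=\mathcal R_\ast|\mathcal R_\ast|^{2s}$ for some Lagrange multiplier $\mu\in\mathbb R$, and pairing with $\mathcal R_\ast$ while using $I_0<0$ gives $\mu>0$. Since $\mathcal I_0(|\mathcal R_\ast|)\le\mathcal I_0(\mathcal R_\ast)$ (diamagnetic inequality) and $\||\mathcal R_\ast|\|_{L^2}=\|\mathcal R_\ast\|_{L^2}$, the function $|\mathcal R_\ast|$ is again a minimizer, hence a non-negative $H^1$ solution of the same equation, hence strictly positive by the strong maximum principle; the equality case of the diamagnetic inequality then forces $\mathcal R_\ast=e^{i\gamma_0}|\mathcal R_\ast|$ for a constant $\gamma_0$. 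Thus, up to phase, every minimizer is a positive $H^1$ solution $w$ of $-w''+\mu w=w^{2s+1}$. Multiplying by $w'$ and using $w,w'\to0$ at infinity gives $(w')^2=\mu w^2-\tfrac1{s+1}w^{2s+2}$, so $w$ is, up to translation, the unique even positive decaying profile; in particular $-w''+w=w^{2s+1}$ has a unique positive solution $\mathcal R_0$, with $\rho_0=\int|\mathcal R_0|^2$, and the positive solution at level $\mu$ is $w(x)=\mu^{1/(2s)}\mathcal R_0(\mu^{1/2}x)$. The constraint $\int|w|^2=s_0$ becomes $\mu^{(2-s)/(2s)}\rho_0=s_0=(\tfrac{s(s-1)}2)^{-1/s}$, whose unique solution is $\mu=(\tfrac{s(s-1)}2\rho_0^{s})^{-2/(2-s)}=\lambda(s)$. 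Hence $\mathcal R:=\lambda(s)^{1/(2s)}\mathcal R_0(\lambda(s)^{1/2}\cdot)$ is a real, positive, radial minimizer solving (\ref{tw75}), and every minimizer is of the form $e^{i\gamma_0}\mathcal R(\cdot-x_0)$, giving i) and iii).

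The only genuinely analytic step is the strict subadditivity used to discard dichotomy; it is routine (a scaling argument, exactly as in Lemma \ref{L5}) but cannot be skipped. The two points requiring a little care are the equality case of the diamagnetic inequality, which needs $|\mathcal R_\ast|>0$ and hence must be invoked only after the maximum principle, and the bookkeeping of the normalization constant $s_0$, which is precisely what pins the Lagrange multiplier to $\lambda(s)$ and makes the mass-normalized minimizer solve (\ref{tw75}) rather than a rescaled equation.
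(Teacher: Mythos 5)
The paper gives no proof of Proposition \ref{P1}; it simply refers to Chapter 8 of \cite{Cazenave}. Your proposal is a correct and complete reconstruction of that standard argument — concentration--compactness (boundedness below, $I_0<0$, strict subadditivity, profile decomposition) to produce a minimizer, the diamagnetic inequality plus the one-dimensional ODE first integral to reduce to a unique positive even profile, and the scaling bookkeeping $\mu^{(2-s)/(2s)}\rho_0=s_0$ that pins the Lagrange multiplier to $\lambda(s)$ — so it matches the approach the paper implicitly endorses.
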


We aim to compare the minimizers $S_{N}$ of (\ref{tw42}) with the minimizer
$\mathcal{R}$ of (\ref{tw55}). For this purpose, we consider the following
minimization problem%
\begin{equation}
Y\left(  N\right)  =\inf\left\{  \mathcal{Y}_{N}\left(  v\right)  :v\in
H^{s/2}\left(  \mathbb{R}\right)  \text{ and}\int\left\vert v\right\vert
^{2}=s_{0},\right\}  , \label{tw11}%
\end{equation}
where
\[
\mathcal{Y}_{N}\left(  v\right)  =\frac{1}{2}\left(  \int_{\mathbb{R}%
}{\overline v}n_{N}\left(  D\right)  v-\frac{1}{s+1}\int_{\mathbb{R}%
}\left\vert v\right\vert ^{2s+2}\right)
\]
with%
\begin{equation}
n_{N}\left(  D\right)  =\mathcal{F}^{-1}n_{N}\left(  \xi\right)
\mathcal{F}\text{, \ }n_{N}\left(  \xi\right)  :=\frac{2n\left(  N^{\frac
{s}{2-s}}\xi\right)  }{s\left(  s-1\right)  N^{\frac{2s}{2-s}}} \label{tw127}%
\end{equation}
and $n\left(  \xi\right)  $ is defined by (\ref{tw114}). Let
\begin{equation}
R_{N}\left(  x\right)  =s_{0}^{1/2}N^{-\frac{1}{2-s}}S_{N}\left(  \frac
{x}{N^{\frac{s}{2-s}}}\right)  . \label{tw116}%
\end{equation}
Note that
\[
\mathcal{Y}_{N}\left(  R_{N}\right)  =s_{0}^{s+1}N^{-\frac{2+s}{2-s}%
}\mathcal{I}\left(  S_{N}\right)
\]
and%
\[
\int\left\vert R_{N}\right\vert ^{2}=s_{0}N^{-1}\int\left\vert S_{N}%
\right\vert ^{2}.
\]
Then, as $S_{N}$ minimizes (\ref{tw42}), $R_{N}$ is a minimizer for
(\ref{tw11}). Moreover $R_{N}$ satisfies the equation
\begin{equation}
n_{N}\left(  D\right)  R_{N}+\theta_{N}R_{N}-\left\vert R_{N}\right\vert
^{2s}R_{N}=0, \label{tw115}%
\end{equation}
with some Lagrange multiplier $\theta_{N}\in\mathbb{R}$.

Let $0<N<\left\langle Q,Q\right\rangle ,$ where $Q$ is an optimizer of
(\ref{tw5})$.$ We denote by $\mathbf{R}_{N}$ the set of minimizers of the
problem (\ref{tw11})$,$ which is not empty by Theorem \ref{T1} and
(\ref{tw116}). Now, we prove that $R_{N}$ converges to $\mathcal{R}$, as $N$
tends to $0.$ Namely, we aim to prove the following:

\begin{theorem}
\label{T2}Let $0<N<\left\langle Q,Q\right\rangle $ and $R_{N}\in\mathbf{R}%
_{N}.$ Then, there exist $\tilde{x},\tilde{\gamma}\in\mathbb{R}$,
$\tilde{\gamma}=\tilde{\gamma}\left(  N\right)  $ and $\tilde{x}=\tilde
{x}\left(  N\right)  ,$ such that the relation%
\begin{equation}
\mathop{\rm lim}_{N\rightarrow0}\left\Vert e^{i\tilde{\gamma}}R_{N}\left(
\cdot+\tilde{x}\right)  -\mathcal{R}\right\Vert _{H^{r}}=0, \label{tw65}%
\end{equation}
for any $r\geq0.$
\end{theorem}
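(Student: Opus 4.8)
The plan is to run a concentration-compactness argument parallel to the proof of Theorem \ref{T1}, but now adapted to the rescaled problem \fref{tw11}, exploiting the crucial fact that the symbol $n_N(\xi)$ converges pointwise (and locally uniformly) as $N\to 0$ to the symbol $|\xi|^2$ of $-\Delta$. Indeed, by Taylor's theorem $n(\zeta)=\frac{s(s-1)}{2}\zeta^2+O(|\zeta|^3)$ as $\zeta\to 0$, so for fixed $\xi$ one has $n_N(\xi)=\frac{2}{s(s-1)N^{2s/(2-s)}}\big(\frac{s(s-1)}{2}N^{2s/(2-s)}\xi^2+O(N^{3s/(2-s)}|\xi|^3)\big)\to \xi^2$, while uniformly $n_N(\xi)\gtrsim \min(\xi^2,|\xi|^s N^{-(2s-s^2)/(2-s)})\ge 0$ stays nonnegative and coercive. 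The strategy is thus: (1) show the family $\{R_N\}$ is bounded in $H^{s/2}$ uniformly in $N$, together with the energy bound $Y(N)\to I_0$; (2) apply the profile decomposition Lemma \ref{L2} to $\{R_{N_n}\}$ for an arbitrary sequence $N_n\to 0$; (3) rule out dichotomy and vanishing via strict subadditivity for the limiting functional $\mathcal I_0$, concluding a single nonzero profile, up to translation, which must equal $\mathcal R$ by Proposition \ref{P1}; (4) upgrade $L^p$ convergence to $H^{s/2}$ convergence via weak lower semicontinuity; (5) bootstrap from $H^{s/2}$ to $H^r$ for all $r\ge 0$ using the equation \fref{tw115}.

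For step (1), the uniform $H^{s/2}$ bound comes from Lemma \ref{L4} applied to $S_N$ — more precisely, since $\|S_N\|_{L^2}^2=N\le\langle Q,Q\rangle$, estimate \fref{tw6} gives $\int\overline{S_N}|D|^sS_N\le C(N)\le C$ for $N$ small, and also $|\mathcal I(S_N)|=|I(N)|\le C$; transporting through the scaling \fref{tw116}--\fref{tw127} one gets $\int\overline{R_N}n_N(D)R_N\le C$ and $\mathcal Y_N(R_N)\le C$, while the coercivity $n_N(\xi)\ge c\min(\xi^2,|\xi|^s\cdot\text{(large)})$ together with $\int|R_N|^2=s_0$ fixed then forces $\|R_N\|_{H^{s/2}}\le C$ uniformly. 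One also needs $\limsup_N Y(N)\le I_0$: take a near-optimal $v$ for \fref{tw55} in $\mathcal S$ with Fourier support near $0$ and note $\mathcal Y_N(v)\to\mathcal I_0(v)$ by dominated convergence on the quadratic term; for the matching lower bound $\liminf_N Y(N)\ge I_0$ one uses $n_N(\xi)\ge\xi^2-C_N(\text{err})$ type bounds or, more robustly, obtains it a posteriori from the profile argument in step (3). For step (2)--(3), the key is that each profile $V^j$ appearing in the decomposition of $R_{N_n}$ gives, in the limit $n\to\infty$, a contribution to $\mathcal I_0$ rather than to $\mathcal Y_{N_n}$ — this requires showing $\mathcal Y_{N_n}(V^j(\cdot-x_{N_n}^j))\to\mathcal I_0(V^j)$, which follows since the quadratic form with symbol $n_{N_n}(\xi)$ converges to that with symbol $\xi^2$ on the fixed function $V^j$ (translation-invariance of the symbol handles the shift). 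Then the subadditivity $I_0<\mathcal E_0(\alpha)+\mathcal E_0(s_0-\alpha)$ for the limiting local problem — standard for mass-subcritical scaling of $-\Delta$, and implicitly contained in Proposition \ref{P1}(ii) — kills splitting into more than one profile, forcing a single $V^{j_0}$ with $\|V^{j_0}\|_{L^2}^2=s_0$ and $\mathcal I_0(V^{j_0})=I_0$; Proposition \ref{P1}(i) identifies $e^{i\tilde\gamma}V^{j_0}(\cdot+\tilde x)=\mathcal R$.

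The main obstacle, and the step requiring genuine care, is controlling the nonlinear term $\int|R_N|^{2s+2}$ and the quadratic term simultaneously through the profile decomposition uniformly in $N$: the decomposition \fref{tw19} is for a fixed bounded sequence, so one must first extract a sequence $N_n\to 0$, prove the statement along it, and then argue by contradiction that \fref{tw65} holds as $N\to 0$ (if not, some sequence $N_n\to 0$ violates it, contradicting what was just proved). A subtler point is that Lemma \ref{L2} gives vanishing of $\|u_n^l\|_{L^p}$ only for $p>2$, so the $L^{2s+2}$ remainder is controlled (good, since $2s+2>2$), but one must ensure the quadratic part $\int\overline{u_n^l}\,n_{N_n}(D)u_n^l$ is handled — here nonnegativity $n_{N_n}(\xi)\ge 0$ from \fref{tw14} with $A=0$ makes it drop out of the lower bound harmlessly, exactly as in the proof of Theorem \ref{T1}. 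Finally, step (5) — promoting $H^{s/2}$ convergence to $H^r$ for every $r$ — uses a bootstrap on equation \fref{tw115}: since $\theta_N\to\lambda(s)>0$ (which also must be verified, by testing \fref{tw115} against $R_N$ and passing to the limit) and the operator $n_N(D)+\theta_N$ is elliptic of order $\ge s$ with symbol bounded below, one writes $R_N=(n_N(D)+\theta_N)^{-1}(|R_N|^{2s}R_N)$ and iterates Sobolev embeddings, with the convergence $R_N\to\mathcal R$ propagating through each iteration; the mismatch between the order-$2$ limiting operator $-\Delta+\lambda(s)$ and the order-$s$ operators $n_N(D)+\theta_N$ is absorbed since $s>1$ gives more than enough regularity at each step.
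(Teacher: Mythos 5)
The paper does \emph{not} run a fresh concentration-compactness/profile-decomposition argument on $\{R_N\}$. Instead, it performs a Fourier truncation: $R^{(\kappa)}=w_\kappa+r_\kappa$ with $\hat w_\kappa$ supported on $|\xi|\le\kappa^{-1/3+\delta/3}$ ($\kappa=N^{s/(2-s)}$). Using the quantitative Lemma \ref{L6} (which you do not cite), it shows that $n_N(\xi)$ is uniformly close to $\xi^2$ on the low-frequency zone and dominates $b|\xi|^s$ with $b$ arbitrarily large on the high-frequency zone. This yields $\|w_\kappa\|_{H^1}\le C$, $\|r_\kappa\|_{H^{s/2}}\to 0$, $\int|w_\kappa|^2\to s_0$ and $\mathcal I_0(w_\kappa)\to I_0$; then Proposition \ref{P1}(ii) — the already-known compactness for the limiting $H^1$ problem — gives $w_\kappa\to\mathcal R$ in $H^1$, and adding $r_\kappa\to 0$ gives convergence in $H^{s/2}$. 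Your plan is genuinely different: you want to apply the $H^{s/2}$ profile decomposition (Lemma \ref{L2}) directly to $R_{N_n}$ and pass to the limiting functional $\mathcal I_0$ profile by profile.

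That route has a real gap at the step you flag as "the main obstacle". Lemma \ref{L2} gives additivity for the \emph{fixed} quadratic form $\mathcal L(u)=\int\overline u\,n(D)u+\int|u|^2$, equivalent to $H^{s/2}$. What you need, though, is additivity for the $N_n$-\emph{varying} form $\int n_{N_n}(\xi)|\hat u|^2\,d\xi$. Your statement that $\mathcal Y_{N_n}\big(V^j(\cdot-x^j_n)\big)\to\mathcal I_0(V^j)$ "follows since the quadratic form converges on the fixed function $V^j$" skips over two points: (a) the profiles $V^j$ produced by Lemma \ref{L2} are only $H^{s/2}$, and if $V^j\notin H^1$ then $\int\xi^2|\hat V^j|^2=\infty$, so there is no limiting value to converge to — one must first prove a uniform $H^1$-type bound so that the $V^j$ land in $H^1$; and (b) even granting $V^j\in H^1$, the cross terms $\int n_{N_n}(\xi)\hat V^j\overline{\hat V^k}e^{i\xi(x^j_n-x^k_n)}d\xi$ and the remainder term $\int n_{N_n}(\xi)|\hat u^l_n|^2$ do not vanish by appealing to Lemma \ref{L2}, since $n_{N_n}(\xi)\sim N_n^{-s}|\xi|^s$ at high frequency blows up as $n\to\infty$. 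You need a uniform domination on the high-frequency region, which is exactly the content of Lemma \ref{L6}. Establishing these facts essentially forces you to redo the paper's Fourier decomposition anyway, at which point invoking Proposition \ref{P1}(ii) is cleaner than rerunning concentration-compactness. There is also a smaller slip in step (1): the bound $\|R_N\|_{H^{s/2}}\le C$ does not follow from Lemma \ref{L4} by mere rescaling — the rescaling inflates $\|S_N\|_{\dot H^{s/2}}$ by a power of $N^{-1}$, and the crude bound $\|S_N\|_{\dot H^{s/2}}\le C$ gives $\|R_N\|_{\dot H^{s/2}}\to\infty$; one must instead use $\int n_N(\xi)|\hat R_N|^2\le C$ (from $Y(N)\le I_0+o(1)$ and Gagliardo–Nirenberg) together with the coercivity $n_N(\xi)\gtrsim|\xi|^s-C$, which again rests on Lemma \ref{L6}.
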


We prepare a lemma that is involved in the proof of Theorem \ref{T2}.

\begin{lemma}
\label{L6}\bigskip Let $0<\alpha<1$ and $b\geq2$. Then, for $\kappa
_{0}=cb^{-\frac{2}{\left(  2-s\right)  \alpha}},$ with some $c>0,$ and any
$0\leq\kappa\leq\kappa_{0},$%
\begin{equation}
n\left(  \kappa\xi\right)  =\frac{s\left(  s-1\right)  }{2}\left(  \kappa
\xi\right)  ^{2}+O\left(  \kappa^{3\left(  1-\alpha\right)  }\right)  ,
\label{tw61}%
\end{equation}
for all $\left\vert \xi\right\vert \leq\kappa^{-\alpha}.$ Moreover,
\begin{equation}
n\left(  \kappa\xi\right)  -b\kappa^{2-s}\left\vert \kappa\xi\right\vert
^{s}\geq0, \label{tw47}%
\end{equation}
holds, for all $\left\vert \xi\right\vert \geq\kappa^{-\alpha}.$
\end{lemma}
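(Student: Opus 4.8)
The plan is to prove the two assertions separately, each by a direct Taylor expansion of $n(\xi) = |\xi+1|^s - s\xi - 1$ near $\xi = 0$, exploiting that $n(0) = n'(0) = 0$ while $n''(0) = s(s-1)$. First I would record the derivatives: $n'(\xi) = s|\xi+1|^{s-1}\operatorname{sgn}(\xi+1) - s$ and $n''(\xi) = s(s-1)|\xi+1|^{s-2}$ for $\xi \neq -1$, so that $n''(0) = s(s-1)$ exactly. For \eqref{tw61}, the key point is that the expansion variable $\kappa\xi$ stays small: when $|\xi| \le \kappa^{-\alpha}$ we have $|\kappa\xi| \le \kappa^{1-\alpha}$, which tends to $0$ since $\alpha < 1$, and in particular $\kappa\xi$ stays in a fixed neighbourhood of $0$ once $\kappa \le \kappa_0$ is small enough (this is where the smallness of $\kappa_0$, hence the $b \ge 2$ hypothesis, is used — $\kappa_0 = cb^{-2/((2-s)\alpha)}$ can be taken $\le$ any fixed constant by choosing $c$ appropriately). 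On that neighbourhood $n'''$ is bounded, so Taylor's theorem with Lagrange remainder gives
\[
n(\kappa\xi) = \tfrac12 n''(0)(\kappa\xi)^2 + \tfrac16 n'''(\theta)(\kappa\xi)^3 = \tfrac{s(s-1)}{2}(\kappa\xi)^2 + O(|\kappa\xi|^3),
\]
and since $|\kappa\xi|^3 \le \kappa^{3(1-\alpha)}$ this is exactly \eqref{tw61}. The constant in the $O(\cdot)$ depends only on $s$.

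For the second assertion \eqref{tw47}, I would treat the complementary regime $|\xi| \ge \kappa^{-\alpha}$. Write the target inequality as $n(\kappa\xi) \ge b\,\kappa^{2-s}|\kappa\xi|^s = b\,\kappa^2|\xi|^s$. Split into two subcases according to whether $|\kappa\xi|$ is large or is still of moderate size. In the moderate-size subcase, $\kappa^{-\alpha} \le |\xi|$ but $|\kappa\xi|$ bounded by a fixed constant: here again Taylor gives $n(\kappa\xi) \ge \tfrac{s(s-1)}{4}(\kappa\xi)^2 = \tfrac{s(s-1)}{4}\kappa^2\xi^2$ (shrinking $\kappa_0$ if needed so the cubic remainder is absorbed), and it suffices to check $\tfrac{s(s-1)}{4}\xi^2 \ge b\kappa^2|\xi|^s$, i.e. $\tfrac{s(s-1)}{4}|\xi|^{2-s} \ge b\kappa^2$; since $|\xi| \ge \kappa^{-\alpha}$ this follows provided $\tfrac{s(s-1)}{4}\kappa^{-\alpha(2-s)} \ge b\kappa^2$, which holds for $\kappa \le \kappa_0$ with $\kappa_0 = cb^{-2/((2-s)\alpha)}$ — precisely the stated form of $\kappa_0$, and this is where the exponent $-\tfrac{2}{(2-s)\alpha}$ comes from. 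In the large subcase $|\kappa\xi|$ large, one uses the coercivity of $n$: estimate \eqref{tw14} (or directly $|\xi+1|^s \ge |\xi|^s - 2^s|\xi|^{s-1}$ for $|\xi|\ge 2$) shows $n(\eta) \ge \tfrac12|\eta|^s$ for $|\eta|$ large, so $n(\kappa\xi) \ge \tfrac12|\kappa\xi|^s = \tfrac12\kappa^s|\xi|^s$; comparing with $b\kappa^2|\xi|^s$, since $s < 2$ and $\kappa \le \kappa_0$ is small we have $\tfrac12\kappa^s \ge b\kappa^2$ (i.e. $\kappa^{2-s} \le \tfrac{1}{2b}$), which again is guaranteed by the choice of $\kappa_0$. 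Collecting the subcases proves \eqref{tw47}.

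The main obstacle — really just bookkeeping — is to match the various smallness requirements on $\kappa_0$ coming from each subcase and verify that they are all of the form $cb^{-2/((2-s)\alpha)}$ for a single $c = c(s) > 0$; the binding constraint is the moderate-size subcase of \eqref{tw47}, which forces exactly that power of $b$, while the remaining constraints ($|\kappa\xi|$ small enough for the cubic remainder, $\kappa^{2-s} \le 1/(2b)$) are weaker for $b \ge 2$ and $\alpha < 1$. I would also remark that the restriction to $|\xi+1| > 0$ poses no problem: the only singularity of $n''$ is at $\xi = -1$, i.e. $\kappa\xi = -1$, which lies outside the small neighbourhood used for \eqref{tw61} and, for \eqref{tw47}, the corresponding $|\xi| = 1/\kappa$ is handled by the coercivity subcase where only the lower bound $n \ge 0$ (valid everywhere) and the large-$|\eta|$ asymptotics are used.
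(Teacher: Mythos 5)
Your argument for \eqref{tw61} is the same as the paper's: a second-order Taylor expansion of $n$ at $0$ with a cubic remainder, valid because $|\kappa\xi| \le \kappa^{1-\alpha}$ stays in a small neighbourhood of the origin where $n'''$ is bounded (the paper uses the integral form of the remainder, you use the Lagrange form; that is immaterial).

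For \eqref{tw47} you take a genuinely different and cleaner route. The paper, on the range $\kappa^{1-\alpha} \le |\kappa\xi| \le 1/2$, bounds the quadratic term from below by the left endpoint and the $s$-power term from above by the right endpoint; because $|\kappa\xi|^{2-s}$ varies across the interval, this forces a nested decomposition into subintervals $[\kappa^{1-\alpha_k},\kappa^{1-\alpha_{k+1}}]$ with $\alpha_k=(\tfrac{2+s}{2s})^k\alpha$, and the binding constraint produces the exponent $-\tfrac{2}{(2-s)\alpha}$ in $\kappa_0$. You instead divide both sides by $|\kappa\xi|^s$: after the Taylor lower bound $n(\kappa\xi)\ge C_1(\kappa\xi)^2$ the inequality $C_1(\kappa\xi)^2\ge b\kappa^{2-s}|\kappa\xi|^s$ reduces to $C_1|\xi|^{2-s}\ge b$, which holds on $|\xi|\ge\kappa^{-\alpha}$ once $\kappa^{\alpha(2-s)}\le C_1/b$. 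This is a single scalar inequality with no multi-scale decomposition, so the approach is simpler than the paper's. What each buys: the paper's route gives exactly the exponent $-2/((2-s)\alpha)$, while yours needs only $\kappa\le (C_1/b)^{1/((2-s)\alpha)}$, a \emph{weaker} requirement, so the stated $\kappa_0$ works a fortiori since $b\ge 2$.

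Two small things to tighten. First, there is an arithmetic slip: after cancelling $\kappa^2$ the reduced inequality is $\tfrac{s(s-1)}{4}|\xi|^{2-s}\ge b$, not ``$\ge b\kappa^2$'', and consequently your claim that the moderate-size subcase ``forces exactly'' the exponent $-\tfrac{2}{(2-s)\alpha}$ is not right — the correct exponent you obtain is $-\tfrac{1}{(2-s)\alpha}$. This does not break the proof (the lemma's $\kappa_0$ is smaller, hence sufficient), but the ``where the exponent comes from'' remark should be corrected. Second, you should be explicit that ``moderate'' and ``large'' together cover all of $|\xi|\ge\kappa^{-\alpha}$; a clean way is to set the threshold at $|\kappa\xi|=1/2$, observe $n(\eta)\ge \tfrac{s(s-1)}{4}\eta^2$ for $|\eta|\le 1/2$ from Taylor, and for $|\eta|\ge 1/2$ use that $n$ is continuous, strictly positive away from $0$, and $n(\eta)/|\eta|^s\to1$ as $|\eta|\to\infty$ to get $n(\eta)\ge c_s|\eta|^s$; the required constraint $\kappa^{2-s}\le c_s/b$ is again weaker than the stated $\kappa_0$. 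With these adjustments your argument is complete and gives the lemma by a shorter path than the paper's.
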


\begin{proof}
By using Taylor's theorem$,$ we have
\begin{equation}
n\left(  \kappa\xi\right)  =\frac{s\left(  s-1\right)  }{2}\left(  \kappa
\xi\right)  ^{2}+\frac{s\left(  s-1\right)  \left(  s-2\right)  }{2}%
{\displaystyle\int\limits_{0}^{\kappa\xi}}
\left(  y+1\right)  ^{s-3}\left(  \kappa\xi-y\right)  ^{2}dy, \label{tw134}%
\end{equation}
for all $\left\vert \kappa\xi\right\vert \leq\frac{1}{2}.$ Then, if
$\kappa\leq\left(  \frac{1}{2}\right)  ^{\frac{1}{1-\alpha}}$ we get
(\ref{tw61}). Next we prove (\ref{tw47}). If $\kappa\leq\kappa_{1}:=\left(
2b\right)  ^{-\frac{1}{2-s}},$ there exists a constant $K>0$ (independent of
$\kappa$) such that (\ref{tw47}) is true for all $\left\vert \kappa
\xi\right\vert \geq K.$ Note now that $n\left(  \kappa\xi\right)  \geq0$ for
all $\xi\in\mathbb{R}$ , $n^{\prime}\left(  \kappa\xi\right)  <0,$ for
$\xi<0,$ and $n^{\prime}\left(  \kappa\xi\right)  >0,$ for $\xi>0.$ Then,
$\min_{\left\vert \kappa\xi\right\vert \geq\frac{1}{2}}n\left(  \kappa
\xi\right)  =c>0$ and thus, if $K\geq\frac{1}{2},$ for $\kappa_{2}=\left(
\frac{c}{bK^{s}}\right)  ^{\frac{1}{2-s}}$ and all $0<\kappa\leq\kappa_{2}, $
(\ref{tw47}) is satisfied on $\frac{1}{2}\leq\left\vert \kappa\xi\right\vert
\leq K.$ Suppose now that $\xi\in\mathbb{R}$ is such that $\kappa^{1-\delta
}\leq\left\vert \kappa\xi\right\vert \leq\min\{\frac{1}{2},\kappa
^{1-\frac{2+s}{2s}\delta}\},$ for all $\kappa\leq\left(  \frac{1}{2}\right)
^{\frac{1}{1-\delta}}$ and some $0<\delta<1.$ Then, using Taylor's theorem we
estimate
\begin{equation}
\left.
\begin{array}
[c]{c}%
n\left(  \kappa\xi\right)  -b\lambda^{2-s}\left\vert \lambda\xi\right\vert
^{s}\\
=s\left(  s-1\right)
{\displaystyle\int\limits_{0}^{\kappa\xi}}
\left(  y+1\right)  ^{s-2}\left(  \kappa\xi-y\right)  dy-b\kappa
^{2-s}\left\vert \lambda\xi\right\vert ^{s}\\
\geq\frac{s\left(  s-1\right)  }{2}\left(  \min\limits_{y\in\lbrack-\left\vert
\kappa\xi\right\vert ,\left\vert \kappa\xi\right\vert ]}\left(  y+1\right)
^{s-2}\right)  \left\vert \kappa\xi\right\vert ^{2}-b\kappa^{2-s}\left\vert
\kappa\xi\right\vert ^{s}\\
\geq\kappa^{2-2\delta}\left(  \frac{s\left(  s-1\right)  }{2^{3-s}}%
-b\kappa^{\frac{\left(  2-s\right)  \delta}{2}}\right)  .
\end{array}
\right.  \label{tw98}%
\end{equation}
Therefore, if $\kappa\leq\kappa_{3}^{\left(  \delta\right)  }:=\min\left\{
\left(  \frac{1}{2}\right)  ^{\frac{1}{1-\delta}},\left(  \frac{s\left(
s-1\right)  }{2^{3-s}b}\right)  ^{\frac{2}{\left(  2-s\right)  \delta}%
}\right\}  ,$ (\ref{tw47}) is satisfied on $\kappa^{1-\delta}\leq\left\vert
\kappa\xi\right\vert \leq\kappa^{1-\frac{2+s}{2s}\delta}.$ Let $m$ be such
that $\left(  \frac{2+s}{2s}\right)  ^{m}\alpha<1\leq\left(  \frac{2+s}%
{2s}\right)  ^{m+1}\alpha$. For $0\leq k\leq m,$ we define $\alpha
_{k}:=\left(  \frac{2+s}{2s}\right)  ^{k}\alpha.$ We decompose $[\kappa
^{1-\alpha},\frac{1}{2}]=\cup_{k=0}^{m-1}[\kappa^{1-\alpha_{k}},\kappa
^{1-\alpha_{k+1}}]\cup\lbrack\kappa^{1-\alpha_{m}},\frac{1}{2}].$ Using
(\ref{tw98}) with $\delta=\alpha_{k},$ $0\leq k\leq m,$ we see that
(\ref{tw47}) is true on $\kappa^{1-\alpha}\leq\left\vert \kappa\xi\right\vert
\leq\frac{1}{2},$ if $\kappa\leq\kappa_{3}:=\kappa_{3}^{\left(  \alpha\right)
}.$ Hence, putting $\kappa_{0}=\min\{\kappa_{1},\kappa_{2},\kappa_{3}\},$ we
conclude that (\ref{tw47}) is satisfied for all $\left\vert \xi\right\vert
\geq\kappa^{-\alpha},$ with $0<\kappa\leq\kappa_{0}.$
\end{proof}

\begin{proof}
[Proof of Theorem \ref{T2}]To somehow simplify the notation, we introduce
$\kappa=N^{\frac{s}{2-s}}$ and study the limit of
\[
R^{\left(  \kappa\right)  }=R_{\kappa^{\frac{2-s}{s}}},
\]
as $\kappa\rightarrow0.$ We denote $Y_{\kappa}=Y\left(  N\right)  $ and
$\mathcal{Y}^{\left(  \kappa\right)  }\left(  v\right)  =\mathcal{Y}%
_{N}\left(  v\right)  .$ Let $\sigma_{\kappa}\in L^{\infty}$ be such that
$\sigma_{\kappa}\left(  \xi\right)  =s_{0}^{1/2}\left(  \int_{\left\vert
\zeta\right\vert \leq\kappa^{-\alpha}}\left\vert \mathcal{\hat{R}}\left(
\zeta\right)  \right\vert ^{2}d\zeta\right)  ^{-1/2},$ for $\left\vert
\xi\right\vert \leq\kappa^{-\alpha},$ and $\sigma_{\kappa}\left(  \xi\right)
=0,$ for $\left\vert \xi\right\vert \geq\kappa^{-\alpha}.$ We put
$\mathcal{R}_{\kappa}:=\mathcal{F}^{-1}\left(  \sigma_{\kappa}\mathcal{\hat
{R}}\right)  .$ Note that $\int\left\vert \mathcal{\hat{R}}\right\vert
^{2}=s_{0}.$ Using (\ref{tw61}), we have%
\begin{equation}
\left.
\begin{array}
[c]{c}%
Y_{\kappa}\leq\mathcal{Y}^{\left(  \kappa\right)  }\left(  \mathcal{R}\right)
=\kappa^{-2}%
{\displaystyle\int_{\mathbb{R}}}
\frac{n\left(  \kappa\xi\right)  }{s\left(  s-1\right)  }\sigma_{\kappa
}\left(  \xi\right)  \left\vert \mathcal{\hat{R}}\left(  \xi\right)
\right\vert ^{2}d\xi-\dfrac{1}{2s+2}%
{\displaystyle\int}
\left\vert \mathcal{R}_{\kappa}\right\vert ^{2s+2}\\
\leq\frac{1}{2}%
{\displaystyle\int_{\mathbb{R}}}
\xi^{2}\sigma_{\kappa}\left(  \xi\right)  \left\vert \mathcal{\hat{R}}\left(
\xi\right)  \right\vert ^{2}d\xi-\dfrac{1}{2s+2}%
{\displaystyle\int}
\left\vert \mathcal{R}_{\kappa}\right\vert ^{2s+2}+O\left(  \kappa^{1-3\alpha
}\right)  .
\end{array}
\right.  \label{tw52}%
\end{equation}
Then, as
\[
\frac{1}{2}%
{\displaystyle\int_{\mathbb{R}}}
\xi^{2}\left(  1-\sigma_{\kappa}\left(  \xi\right)  \right)  \left\vert
\mathcal{\hat{R}}\left(  \xi\right)  \right\vert ^{2}d\xi+\dfrac{1}{2s+2}%
{\displaystyle\int}
\left(  \left\vert \mathcal{R}\right\vert ^{2s+2}-\left\vert \mathcal{R}%
_{\kappa}\right\vert ^{2s+2}\right)  =O\left(  \kappa\right)  ,
\]
we deduce
\begin{equation}
Y_{\kappa}\leq I_{0}+O\left(  \kappa^{1-3\alpha}\right)  . \label{tw54}%
\end{equation}
Let $\chi_{\kappa}\in L^{\infty}$ be such that $\chi_{\kappa}\left(
\xi\right)  =1,$ for $\left\vert \xi\right\vert \leq\kappa^{-1/3+\delta/3},$
$0<\delta<1,$ and $\chi_{\kappa}\left(  \xi\right)  =0,$ for $\left\vert
\xi\right\vert \geq\kappa^{-1/3+\delta/3}.$ We define
\begin{equation}
w_{\kappa}:=\mathcal{F}^{-1}\chi_{\kappa}\mathcal{F}R^{\left(  \kappa\right)
}\text{ \ and \ }r_{\kappa}:=R^{\left(  \kappa\right)  }-w_{\kappa}.
\label{tw59}%
\end{equation}
Let us prove that in fact $Y_{\kappa}\rightarrow I_{0},$ as $\kappa
\rightarrow0.$ Note that
\begin{equation}
Y_{\kappa}=\mathcal{Y}^{\left(  \kappa\right)  }\left(  R^{\left(
\kappa\right)  }\right)  =\mathcal{Y}^{\left(  \kappa\right)  }\left(
w_{\kappa}+r_{\kappa}\right)  . \label{tw73}%
\end{equation}
Using the elementary relation $\left(  a+b\right)  ^{p}\leq a^{p}%
+b^{p}+C\left(  a^{p-1}b+ab^{p-1}\right)  ,$ $a,b\geq0,$ $p\geq1,$ by Young's
inequality we have
\[
\left\vert w_{\kappa}+r_{\kappa}\right\vert ^{2s+2}\leq\left(  1+C\varepsilon
\right)  \left\vert w_{\kappa}\right\vert ^{2s+2}+\left(  1+K\left(
\varepsilon\right)  \right)  \left\vert r_{\kappa}\right\vert ^{2s+2},\text{
}\varepsilon>0,
\]
with $K\left(  \varepsilon\right)  :=C\varepsilon^{-\left(  \frac{1}%
{2s+1}+2s+1\right)  }.$ Using the last inequality in (\ref{tw73}) we get
\begin{equation}
\left.  Y_{\kappa}\geq Y_{1}\left(  \kappa\right)  +Y_{2}\left(
\kappa\right)  ,\right.  \label{tw50}%
\end{equation}
where%
\[
Y_{1}\left(  \kappa\right)  :=%
{\displaystyle\int\limits_{\mathbb{R}}}
\frac{n\left(  \kappa\xi\right)  }{s\left(  s-1\right)  \kappa^{2}}\left\vert
\hat{w}_{\kappa}\left(  \xi\right)  \right\vert ^{2}d\xi-\dfrac{1+C\varepsilon
}{2s+2}%
{\displaystyle\int\limits_{\mathbb{R}}}
\left\vert w_{\kappa}\right\vert ^{2s+2}%
\]
and%
\[
Y_{2}\left(  \kappa\right)  :=%
{\displaystyle\int\limits_{\mathbb{R}}}
\frac{n\left(  \kappa\xi\right)  }{s\left(  s-1\right)  \kappa^{2}}\left\vert
\hat{r}_{\kappa}\left(  \xi\right)  \right\vert ^{2}d\xi-\dfrac{1+K\left(
\varepsilon\right)  }{2s+2}%
{\displaystyle\int\limits_{\mathbb{R}}}
\left\vert r_{\kappa}\right\vert ^{2s+2}.
\]
By using (\ref{tw61}) with $\alpha=\frac{1-\delta}{3}$ we have%
\[
\left.  Y_{1}\left(  \kappa\right)  =\frac{1}{2}%
{\displaystyle\int\limits_{\mathbb{R}}}
\overline{w_{\kappa}}\left\vert D\right\vert ^{2}w_{\kappa}-\dfrac
{1+C\varepsilon}{2s+2}%
{\displaystyle\int\limits_{\mathbb{R}}}
\left\vert w_{\kappa}\right\vert ^{2s+2}+O\left(  \kappa^{\delta}\right)
{\displaystyle\int\limits_{\mathbb{R}}}
\left\vert \hat{w}_{\kappa}\left(  \xi\right)  \right\vert ^{2}d\xi,\right.
\]
as $\kappa\rightarrow0.$ Observe that%
\begin{equation}
\int\left\vert \hat{w}_{\kappa}\left(  \xi\right)  \right\vert ^{2}d\xi
+\int\left\vert \hat{r}_{\kappa}\left(  \xi\right)  \right\vert ^{2}d\xi
=\int\left\vert R^{\left(  \kappa\right)  }\right\vert ^{2}=s_{0}.
\label{tw45}%
\end{equation}
Hence, we show that
\begin{equation}
Y_{1}\left(  \kappa\right)  =\frac{1}{2}%
{\displaystyle\int\limits_{\mathbb{R}}}
\overline{w_{\kappa}}\left\vert D\right\vert ^{2}w_{\kappa}-\dfrac
{1+C\varepsilon}{2s+2}%
{\displaystyle\int\limits_{\mathbb{R}}}
\left\vert w_{\kappa}\right\vert ^{2s+2}+O\left(  \kappa^{\delta}\right)  ,
\label{tw49}%
\end{equation}
as $\kappa\rightarrow0.$ On the other hand, we claim that there is $\kappa
_{1}\left(  \varepsilon\right)  >0$ such that
\begin{equation}
Y_{2}\left(  \kappa\right)  \geq0, \label{tw48}%
\end{equation}
for any $0<\kappa\leq\kappa_{1}\left(  \varepsilon\right)  .$ Indeed, using
(\ref{tw5}) and (\ref{tw45})$,$ we estimate
\begin{equation}
\int_{\mathbb{R}}\left\vert r_{\kappa}\right\vert ^{2s+2}\leq C_{s}%
\int_{\mathbb{R}}\left\vert \xi\right\vert ^{s}\left\vert \hat{r}_{\kappa
}\left(  \xi\right)  \right\vert ^{2}d\xi. \label{tw44}%
\end{equation}
Then,%
\begin{equation}
Y_{2}\left(  \kappa\right)  \geq\frac{1}{s\left(  s-1\right)  \kappa^{2}}%
{\displaystyle\int\limits_{\mathbb{R}}}
\left(  n\left(  \kappa\xi\right)  -\dfrac{C_{s}s\left(  s-1\right)  \left(
1+K\left(  \varepsilon\right)  \right)  }{2s+2}\kappa^{2-s}\left\vert
\kappa\xi\right\vert ^{s}\right)  \left\vert \hat{r}_{\kappa}\left(
\xi\right)  \right\vert ^{2}d\xi. \label{tw56}%
\end{equation}
Therefore, since $\hat{r}_{\kappa}\left(  \xi\right)  =0$ for $\left\vert
\xi\right\vert \leq\kappa^{-1/3+\delta/3},$ using (\ref{tw47}) of Lemma
\ref{L6} with $\alpha=\frac{1-\delta}{3}$ and $b=\dfrac{C_{s}s\left(
s-1\right)  \left(  1+K\left(  \varepsilon\right)  \right)  }{2s+2},$ we get
(\ref{tw48}). Since by Lemma \ref{L6} $\kappa=O\left(  b^{-\frac{2}{\left(
2-s\right)  \alpha}}\right)  ,$ we note that
\begin{equation}
\varepsilon=O\left(  \kappa^{\frac{\left(  2-s\right)  \alpha}{2\left(
\frac{1}{2s+1}+2s+1\right)  }}\right)  . \label{tw155}%
\end{equation}
Using (\ref{tw49}) and (\ref{tw48}) in (\ref{tw50}) we see%
\begin{equation}
Y_{\kappa}\geq Y_{1}\left(  \kappa\right)  +Y_{2}\left(  \kappa\right)  \geq
Y_{1}\left(  \kappa\right)  =\frac{1}{2}%
{\displaystyle\int\limits_{\mathbb{R}}}
\overline{w_{\kappa}}\left\vert D\right\vert ^{2}w_{\kappa}-\dfrac
{1+C\varepsilon}{2s+2}%
{\displaystyle\int\limits_{\mathbb{R}}}
\left\vert w_{\kappa}\right\vert ^{2s+2}+O\left(  \kappa^{\delta}\right)  ,
\label{tw71}%
\end{equation}
as $\kappa\rightarrow0.$ Thus, taking into account (\ref{tw54}), we get
\[
\frac{1}{2}%
{\displaystyle\int\limits_{\mathbb{R}}}
\overline{w_{\kappa}}\left\vert D\right\vert ^{2}w_{\kappa}-\dfrac
{1+C\varepsilon}{2s+2}%
{\displaystyle\int\limits_{\mathbb{R}}}
\left\vert w_{\kappa}\right\vert ^{2s+2}\leq C,
\]
for any $0<\kappa\leq\kappa_{2}\left(  \varepsilon\right)  ,$ and some
$\kappa_{2}\left(  \varepsilon\right)  >0.$ Then, from (\ref{tw5}) and
(\ref{tw45}) we get%
\[%
{\displaystyle\int\limits_{\mathbb{R}}}
\left(  \frac{1}{2}\left\vert \xi\right\vert ^{2}-\dfrac{C_{s}\left(
1+C\varepsilon\right)  }{2s+2}\left\vert \xi\right\vert ^{s}\right)
\left\vert \hat{w}_{\kappa}\left(  \xi\right)  \right\vert ^{2}d\xi\leq C.
\]
Therefore, as for some $c>0,$ $\frac{\left\vert \xi\right\vert ^{2}}{2}%
-\dfrac{C_{s}\left(  1+C\varepsilon\right)  }{2s+2}\left\vert \xi\right\vert
^{s}\geq\frac{\left\vert \xi\right\vert ^{2}}{4},$ for all $\left\vert
\xi\right\vert \geq c,$ by (\ref{tw45}) we obtain the estimate
\begin{equation}
\left\Vert w_{\kappa}\right\Vert _{H^{1}}\leq C, \label{tw72}%
\end{equation}
uniformly on $0<\kappa\leq\kappa_{2}\left(  \varepsilon\right)  .$ Moreover,
using (\ref{tw47}) we get
\begin{equation}
\left.
\begin{array}
[c]{c}%
Y_{2}\left(  \kappa\right)  \geq\frac{1}{s\left(  s-1\right)  \kappa^{2}}%
{\displaystyle\int\limits_{\mathbb{R}}}
\left(  n\left(  \kappa\xi\right)  -\dfrac{C_{s}s\left(  s-1\right)  \left(
1+K\left(  \varepsilon\right)  \right)  }{2s+2}\kappa^{2-s}\left\vert
\kappa\xi\right\vert ^{s}\right)  \left\vert \hat{r}_{\kappa}\left(
\xi\right)  \right\vert ^{2}d\xi\\
\geq%
{\displaystyle\int\limits_{\mathbb{R}}}
\left\vert \xi\right\vert ^{s}\left\vert \hat{r}_{\kappa}\left(  \xi\right)
\right\vert ^{2}d\xi.
\end{array}
\right.  \label{tw109}%
\end{equation}
Then, since the left-hand side of the last relation is bounded by $I_{0},$ due
to (\ref{tw54}) and (\ref{tw50}), we deduce%
\[
\left\Vert r_{\kappa}\right\Vert _{H^{s/2}}^{2}\leq C,
\]
for all $\kappa>0$ sufficiently small. In particular, we have
\begin{align*}
\kappa^{-\frac{\left(  1-\delta\right)  s}{3}}%
{\displaystyle\int\limits_{\mathbb{R}}}
\left\vert \hat{r}_{\kappa}\left(  \xi\right)  \right\vert ^{2}d\xi &
=\kappa^{-\frac{\left(  1-\delta\right)  s}{3}}%
{\displaystyle\int\limits_{\left\vert \xi\right\vert \geq\kappa^{-1/3+\delta
/3}}}
\left\vert \hat{r}_{\kappa}\left(  \xi\right)  \right\vert ^{2}d\xi\\
&  \leq%
{\displaystyle\int\limits_{\left\vert \xi\right\vert \geq\kappa^{-1/3+\delta
/3}}}
\left\vert \xi\right\vert ^{s}\left\vert \hat{r}_{\kappa}\left(  \xi\right)
\right\vert ^{2}d\xi\leq C,
\end{align*}
which implies%
\begin{equation}%
{\displaystyle\int\limits_{\mathbb{R}}}
\left\vert \hat{r}_{\kappa}\left(  \xi\right)  \right\vert ^{2}d\xi\leq
C\kappa^{\frac{\left(  1-\delta\right)  s}{3}}. \label{tw108}%
\end{equation}
Then, it follows from (\ref{tw45}) that
\begin{equation}
\int\left\vert w_{\kappa}\right\vert ^{2}\rightarrow s_{0},\text{ as }%
\kappa\rightarrow0. \label{tw67}%
\end{equation}
Then, returning to (\ref{tw71}) and using (\ref{tw155}) we get%
\[
Y_{\kappa}\geq Y_{1}\left(  \kappa\right)  +Y_{2}\left(  \kappa\right)  \geq
Y_{1}\left(  \kappa\right)  =\mathcal{I}_{0}\left(  w_{\kappa}\right)
+o\left(  \kappa^{\delta_{1}}\right)  ,
\]
for some $0<\delta_{1}\leq\delta.$ Therefore, using (\ref{tw54}), (\ref{tw72})
and (\ref{tw67}), we obtain%
\begin{equation}
\left.
\begin{array}
[c]{c}%
I_{0}+o\left(  \kappa^{\delta_{1}}\right)  \geq Y_{\kappa}\geq Y_{1}\left(
\kappa\right)  +Y_{2}\left(  \kappa\right)  \geq Y_{1}\left(  \kappa\right) \\
=\mathcal{I}_{0}\left(  w_{\kappa}\right)  +o\left(  \kappa^{\delta_{1}%
}\right)  \geq I_{0}+o\left(  \kappa^{\delta_{1}}\right)  .
\end{array}
\right.  \label{tw160}%
\end{equation}
In particular, this means
\begin{equation}
\mathop{\rm lim}_{\kappa\rightarrow0}\mathcal{I}_{0}\left(  w_{\kappa}\right)
=I_{0}\text{ and }\mathop{\rm lim}_{\kappa\rightarrow0}Y_{2}\left(
\kappa\right)  =0.\text{ } \label{tw57}%
\end{equation}
Then, from (\ref{tw109}) and (\ref{tw108}) we deduce that
\begin{equation}
\mathop{\rm lim}_{\kappa\rightarrow0}\left\Vert r_{\kappa}\right\Vert
_{H^{s/2}}^{2}=\mathop{\rm lim}_{\kappa\rightarrow0}\left(
{\displaystyle\int\limits_{\left\vert \xi\right\vert \geq\kappa^{-1/3+\delta
/3}}}
\left\vert \hat{r}_{\kappa}\left(  \xi\right)  \right\vert ^{2}d\xi+%
{\displaystyle\int\limits_{\mathbb{R}}}
\left\vert \xi\right\vert ^{s}\left\vert \hat{r}_{\kappa}\left(  \xi\right)
\right\vert ^{2}d\xi\right)  =0. \label{tw58}%
\end{equation}
Now, we have all the estimates that we need to prove Theorem \ref{T2}%
\textbf{.} We argue as follows. Let $\{N_{n}\}_{n=1}^{\infty}$ be such that
$0<N_{n}<\left\langle Q,Q\right\rangle $ and $N_{n}\rightarrow0,$ as
$n\rightarrow\infty.$ Then, $\kappa_{n}=N_{n}^{\frac{s}{2-s}}$ also tends to
$0.$ We consider the sequence $\{w_{n}\}_{n=1}^{\infty},$ where $w_{n}%
:=w_{\kappa_{n}}.$ Taking into account (\ref{tw67}) and (\ref{tw57}), from
Proposition \ref{P1} it follows that there exist a subsequence of
$\{w_{n}\}_{n=1}^{\infty},$ that we still denote by $\{w_{n}\}_{n=1}^{\infty}$
and $x_{n},\gamma_{n}\in\mathbb{R}$, such that%
\[
\mathop{\rm lim}_{n\rightarrow\infty}\left\Vert e^{i\gamma_{n}}w_{n}\left(
\cdot+x_{n}\right)  -\mathcal{R}\right\Vert _{H^{1}}=0.
\]
Furthermore, using (\ref{tw59}) and (\ref{tw58}) we deduce that
\begin{equation}
\mathop{\rm lim}_{n\rightarrow\infty}\left\Vert e^{i\gamma_{n}}R^{\left(
\kappa_{n}\right)  }\left(  \cdot+x_{n}\right)  -\mathcal{R}\right\Vert
_{H^{s/2}}=0. \label{tw64}%
\end{equation}
To complete the proof, we need to show that $e^{i\gamma_{n}}R^{\left(
\kappa_{n}\right)  }\left(  \cdot+x_{n}\right)  $ converges to $\mathcal{R}$
in $H^{r},$ for all $r\geq0.$ Recall that for any $0<\kappa<\left\langle
Q,Q\right\rangle ^{\frac{2-s}{s}},$ $R^{\left(  \kappa\right)  }$ is a
minimizer of (\ref{tw11}). Then, $R^{\left(  \kappa\right)  }$ satisfies
equation (\ref{tw115}), that is%
\begin{equation}
n^{\left(  \kappa\right)  }\left(  D\right)  R^{\left(  \kappa_{n}\right)
}+\theta^{\left(  \kappa\right)  }R^{\left(  \kappa_{n}\right)  }-\left\vert
R^{\left(  \kappa_{n}\right)  }\right\vert ^{2s}R^{\left(  \kappa_{n}\right)
}=0, \label{tw60}%
\end{equation}
where%
\begin{equation}
n^{\left(  \kappa\right)  }\left(  D\right)  :=n_{\kappa^{\frac{2-s}{s}}%
}\left(  D\right)  , \label{tw74}%
\end{equation}
for some $\theta^{\left(  \kappa\right)  }=\theta_{\kappa^{\frac{2-s}{s}}}%
\in\mathbb{R}$. Since $R^{\left(  \kappa_{n}\right)  }$ converges to
$\mathcal{R},$ in $H^{s/2},$ as $n\rightarrow\infty,$ $\left\Vert R^{\left(
\kappa_{n}\right)  }\right\Vert _{H^{s/2}}\leq C,$ uniformly for
$n\in\mathbb{N}$. In particular, from Sobolev theorem we get $\int\left\vert
R^{\left(  \kappa_{n}\right)  }\right\vert ^{2s+2}\leq C,$ and then, using the
bound (\ref{tw54}) we see that
\[%
{\displaystyle\int\limits_{\mathbb{R}}}
n^{\left(  \kappa_{n}\right)  }\left(  \xi\right)  \left\vert \hat{R}^{\left(
\kappa_{n}\right)  }\left(  \xi\right)  \right\vert ^{2}d\xi\leq C.
\]
Hence, from (\ref{tw60}) we get%
\begin{equation}
\left\vert \theta^{\left(  \kappa_{n}\right)  }\right\vert \leq\int\left\vert
R^{\left(  \kappa_{n}\right)  }\right\vert ^{2s+2}+%
{\displaystyle\int\limits_{\mathbb{R}}}
n^{\left(  \kappa_{n}\right)  }\left(  \xi\right)  \left\vert \hat{R}^{\left(
\kappa_{n}\right)  }\left(  \xi\right)  \right\vert ^{2}d\xi\leq C,
\label{tw68}%
\end{equation}
uniformly for $n\in\mathbb{N}$. From (\ref{tw60}) and (\ref{tw68}), via the
Sobolev theorem, we also deduce%
\begin{equation}%
{\displaystyle\int\limits_{\mathbb{R}}}
\left(  n^{\left(  \kappa_{n}\right)  }\left(  \xi\right)  \right)
^{2}\left\vert \hat{R}^{\left(  \kappa_{n}\right)  }\left(  \xi\right)
\right\vert ^{2}d\xi\leq C\left(  \int\left\vert R^{\left(  \kappa_{n}\right)
}\right\vert ^{2}+\int\left\vert R^{\left(  \kappa_{n}\right)  }\right\vert
^{4s+2}\right)  \leq C. \label{tw62}%
\end{equation}
Then, it follows from Lemma \ref{L6} that $\left\Vert R^{\left(  \kappa
_{n}\right)  }\right\Vert _{H^{s}}\leq C.$ Applying the operator $D$ to
equation (\ref{tw60}) and arguing similarly to the proof of (\ref{tw62}), we
get $\left\Vert R^{\left(  \kappa_{n}\right)  }\right\Vert _{H^{s+1}}\leq C.$
By induction on $r\in\mathbb{N}$, we see that, in fact, $R^{\left(  \kappa
_{n}\right)  }$ is uniformly bounded in $H^{r},$ for all $r\in\mathbb{N}$ (and
hence all $r\geq0$). Therefore, as (\ref{tw64}) is true, $e^{i\gamma_{n}%
}R^{\left(  \kappa_{n}\right)  }\left(  \cdot+x_{n}\right)  $ converges to
$\mathcal{R}$ in $H^{r},$ $r\geq0,$ as $n\rightarrow\infty.$ Recalling that
$R^{\left(  \kappa_{n}\right)  }=R_{\kappa_{n}^{\frac{2-s}{s}}}$ and
$\kappa_{n}=N_{n}^{\frac{s}{2-s}}$ we deduce that for any $\{N_{n}%
\}_{n=1}^{\infty}$ such that $0<N_{n}<\left\langle Q,Q\right\rangle $ and
$N_{n}\rightarrow0,$ as $n\rightarrow\infty$, the estimate
\[
\mathop{\rm lim}_{n\rightarrow\infty}\left\Vert e^{i\gamma_{n}}R_{N_{n}%
}\left(  \cdot+x_{n}\right)  -\mathcal{R}\right\Vert _{H^{r}}=0,
\]
holds for any $r\geq0.$ Since the sequence $\{N_{n}\}_{n=1}^{\infty}$ in the
last relation is arbitrary, we attain (\ref{tw65}).
\end{proof}

Theorem \ref{T2} allows to calculate the limit as the mass $0<N<\left\langle
Q,Q\right\rangle $ tends to $0$ for Lagrange multiplier $\theta_{N}$ in the
equation (\ref{tw115}) for $R_{N}.$ We have the following.

\begin{lemma}
Let $0<N<\left\langle Q,Q\right\rangle .$ The relation%
\begin{equation}
\left\vert \theta_{N}-\lambda\left(  s\right)  \right\vert =o\left(  1\right)
,\text{ as }N\rightarrow0, \label{tw102}%
\end{equation}
is true, where $\lambda\left(  s\right)  =\left(  \rho_{0}^{s}\frac{s\left(
s-1\right)  }{2}\right)  ^{-\frac{2}{2-s}}$ is the Lagrange multiplier in
equation (\ref{tw75}) for $\mathcal{R}$.
\end{lemma}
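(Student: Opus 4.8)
The plan is to test equation (\ref{tw115}) against $R_N$ in $L^2$ and pass to the limit $N\to 0$ with the help of Theorem \ref{T2} and the bounds produced in its proof. Since $R_N$ is a minimizer of (\ref{tw11}) we have $\int_{\mathbb R}|R_N|^2=s_0$, so pairing (\ref{tw115}) with $R_N$ and using Plancherel gives
\[
\theta_N\, s_0=\int_{\mathbb R}|R_N|^{2s+2}-\int_{\mathbb R}n_N(\xi)\,\bigl|\widehat{R_N}(\xi)\bigr|^2\,d\xi .
\]
The right-hand side is unchanged under $R_N\mapsto e^{i\tilde{\gamma}}R_N(\cdot+\tilde{x})$, so when computing its limit I may assume, by Theorem \ref{T2}, that $R_N\to\mathcal R$ in $H^r$ for every $r\ge 0$, and moreover that $\|R_N\|_{H^r}\le C_r$ uniformly for $N$ small (the latter being exactly the higher-regularity bound established in the proof of Theorem \ref{T2}). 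In particular the Sobolev embedding $H^{s/2}(\mathbb R)\hookrightarrow L^{2s+2}(\mathbb R)$ gives $\int_{\mathbb R}|R_N|^{2s+2}\to\int_{\mathbb R}|\mathcal R|^{2s+2}$.

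The main step is the limit of the quadratic term: I claim $\int_{\mathbb R}n_N(\xi)|\widehat{R_N}(\xi)|^2\,d\xi\to\int_{\mathbb R}|\xi|^2\,|\widehat{\mathcal R}(\xi)|^2\,d\xi=\int_{\mathbb R}|\mathcal R'|^2$. Put $\kappa=N^{\frac{s}{2-s}}$ and fix $\alpha\in(0,\tfrac13)$. On the low frequencies $\{|\xi|\le\kappa^{-\alpha}\}$, the expansion (\ref{tw61}) of Lemma \ref{L6} gives $n_N(\xi)=\xi^2+O(\kappa^{1-3\alpha})$, hence
\[
\int_{|\xi|\le\kappa^{-\alpha}}n_N(\xi)|\widehat{R_N}|^2\,d\xi=\int_{|\xi|\le\kappa^{-\alpha}}\xi^2\,|\widehat{R_N}|^2\,d\xi+O(\kappa^{1-3\alpha})\,s_0 ,
\]
and the first term on the right converges to $\int|\mathcal R'|^2$, since $\|R_N\|_{\dot H^1}\to\|\mathcal R\|_{\dot H^1}$ while the omitted tail obeys $\int_{|\xi|\ge\kappa^{-\alpha}}\xi^2|\widehat{R_N}|^2\,d\xi\le\kappa^{2\alpha(r-1)}\|R_N\|_{H^r}^2\to 0$ for any fixed $r\ge 2$. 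On the high frequencies $\{|\xi|\ge\kappa^{-\alpha}\}$ I use the elementary bound $n(\eta)\le C\eta^2$ valid for all $\eta\in\mathbb R$, which gives $n_N(\xi)\le C\xi^2$ uniformly in $N$, whence
\[
\int_{|\xi|\ge\kappa^{-\alpha}}n_N(\xi)|\widehat{R_N}|^2\,d\xi\le C\int_{|\xi|\ge\kappa^{-\alpha}}\xi^2\,|\widehat{R_N}|^2\,d\xi\le C\,\kappa^{2\alpha(r-1)}\|R_N\|_{H^r}^2\longrightarrow 0
\]
for $r\ge 2$. (Alternatively, this region can be dealt with by Cauchy--Schwarz together with the bound $\int_{\mathbb R}\bigl(n^{(\kappa)}(\xi)\bigr)^2|\widehat{R_N}(\xi)|^2\,d\xi\le C$ already obtained in the proof of Theorem \ref{T2}, see (\ref{tw62}).) This proves the claim.

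Putting the two computations together, $\theta_N s_0\to\int|\mathcal R|^{2s+2}-\int|\mathcal R'|^2$. It then remains to recognize this limit as $\lambda(s) s_0$, which follows by pairing the profile equation (\ref{tw75}) with $\mathcal R$, integrating by parts, and using $\int|\mathcal R|^2=s_0$ (Proposition \ref{P1}): one gets $\int|\mathcal R'|^2+\lambda(s)s_0-\int|\mathcal R|^{2s+2}=0$, i.e. $\lambda(s)s_0=\int|\mathcal R|^{2s+2}-\int|\mathcal R'|^2$. Dividing by $s_0>0$ yields $\theta_N\to\lambda(s)$, which is (\ref{tw102}). The one genuinely delicate point is the high-frequency control in the quadratic term $\int n_N|\widehat{R_N}|^2$: one has to separate the regime where $n_N$ is well approximated by $\xi^2$ from the rest and absorb the remainder using the uniform higher-order Sobolev bounds on $R_N$ coming from the proof of Theorem \ref{T2}; the remaining steps are routine.
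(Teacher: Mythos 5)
Your argument is correct, but it follows a genuinely different route than the paper's. The paper subtracts equations (\ref{tw115}) for $R_N$ and (\ref{tw75}) for $\mathcal R$, isolates $(\theta_N-\lambda(s))\mathcal R$, and bounds the resulting pointwise inequality (\ref{tw84}) term by term: the difference terms vanish because $R_N\to\mathcal R$ in $H^r$ and $|\theta_N|\le C$ (equation (\ref{tw87})), while the commutator-type error $\mathbf r=|(|D|^2-n_N(D))\mathcal R|$ is estimated in $L^\infty$ via a Sobolev bound (\ref{tw88}) split into a low-frequency piece controlled by the expansion (\ref{tw61}) and a high-frequency tail controlled by $\|\mathcal R\|_{H^2}$. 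You instead test (\ref{tw115}) against $R_N$ itself, producing the scalar identity $\theta_N s_0=\int|R_N|^{2s+2}-\int n_N(\xi)|\widehat{R_N}|^2\,d\xi$, and pass to the limit in each integral; the potential energy converges by Sobolev embedding and $H^{s/2}$ convergence, and the kinetic-type term converges to $\int|\mathcal R'|^2$ by the same low-/high-frequency split, using (\ref{tw61}) at low frequencies, the elementary bound $n_N(\xi)\le C\xi^2$ at high frequencies, and the uniform $H^r$ bounds from the proof of Theorem \ref{T2} to kill the tail. You then recognize the limit as $\lambda(s)s_0$ via the Pohozaev/energy identity obtained by pairing (\ref{tw75}) with $\mathcal R$. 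Both proofs use the same ingredients (Theorem \ref{T2}, Lemma \ref{L6}, the quadratic bound on $n_N$, uniform bounds on $\theta_N$ and $R_N$), but your integral-identity approach sidesteps the pointwise manipulation of the profile difference and avoids having to make sense of the pointwise inequality (\ref{tw84}) (which, since its left side is $|\theta_N-\lambda(s)|\mathcal R(x)$, implicitly requires evaluating at a point where $\mathcal R$ is bounded away from zero or taking an $L^\infty$ norm). Your version is arguably cleaner and makes the required mode of convergence more explicit, at the small cost of invoking the variational identity for $\mathcal R$.
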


\begin{proof}
Theorem \ref{T2} implies that there is $R_{N}\left(  x\right)  $, such that
$R_{N}$ converges to $\mathcal{R},$ in $H^{r},$ $r\geq0,$ as $N\rightarrow0. $
Since $R_{N}$ and $\mathcal{R}$ solve (\ref{tw115}) and (\ref{tw75}),
respectively, we have%
\begin{equation}
\left.
\begin{array}
[c]{c}%
\left\vert \theta_{N}-\lambda\left(  s\right)  \right\vert \mathcal{R}%
\leq\mathbf{r}+\left\vert n_{N}\left(  D\right)  \left(  \mathcal{R}%
-R_{N}\right)  \right\vert \\
+\left\vert \left\vert R_{N}\right\vert ^{2s}R_{N}-\mathcal{R}^{2s+1}%
\right\vert +\left\vert \theta_{N}\right\vert \left\vert \mathcal{R-}%
R_{N}\right\vert ,
\end{array}
\right.  \label{tw84}%
\end{equation}
where
\[
\mathbf{r}:=\left\vert \left(  \left\vert D\right\vert ^{2}\mathcal{-}%
n_{N}\left(  D\right)  \right)  \mathcal{\mathcal{R}}\right\vert .
\]
As $n_{N}\left(  \xi\right)  \leq C\left(  1+\xi^{2}\right)  $ for all $\xi
\in\mathbb{R}$,
\begin{equation}
n_{N}\left(  D\right)  \leq C\left\langle D\right\rangle ^{2}. \label{tw86}%
\end{equation}
By noting that the sequence $\{\kappa_{n}\}_{n=1}^{\infty}$ in (\ref{tw68}) is
arbitrary, as $\theta^{\left(  \kappa_{n}\right)  }=\theta_{\kappa_{n}%
^{\frac{2-s}{s}}}$ we have%
\begin{equation}
\ \left\vert \theta_{N}\right\vert \leq C, \label{tw87}%
\end{equation}
uniformly for all $N>0$ small enough. Then, since $R_{N}$ converges to
$\mathcal{R}$, using (\ref{tw86}), (\ref{tw87}) and Sobolev theorem, we see
that the last three terms in the right hand side of (\ref{tw84}) tend to $0,$
as $N\rightarrow0.$ Now, note that Sobolev theorem implies%
\begin{equation}
\left.  \mathbf{r}^{2}\leq%
{\displaystyle\int\limits_{\left\vert \xi\right\vert \leq\kappa^{-\alpha}}}
\left\langle \xi\right\rangle \left\vert \left\vert \xi\right\vert ^{2}%
-n_{N}\left(  \xi\right)  \right\vert \left\vert \mathcal{\hat{R}}\left(
\xi\right)  \right\vert ^{2}d\xi+\mathbf{\tilde{r}},\right.  \label{tw88}%
\end{equation}
with $\kappa=N^{\frac{s}{2-s}},$ $0<\alpha<1/3$ and
\[
\mathbf{\tilde{r}}:=%
{\displaystyle\int\limits_{\left\vert \xi\right\vert \geq\kappa^{-\alpha}}}
\left\langle \xi\right\rangle \left\vert \left\vert \xi\right\vert ^{2}%
-n_{N}\left(  \xi\right)  \right\vert \left\vert \mathcal{\hat{R}}\left(
\xi\right)  \right\vert ^{2}d\xi.
\]
Using (\ref{tw86}) we have%
\[
\mathbf{\tilde{r}}\leq C%
{\displaystyle\int\limits_{\left\vert \xi\right\vert \geq\kappa^{-\alpha}}}
\left\langle \xi\right\rangle ^{4}\left\vert \mathcal{\hat{R}}\left(
\xi\right)  \right\vert ^{2}d\xi\leq C\kappa^{\alpha}\left\Vert \mathcal{\hat
{R}}\right\Vert _{H^{2}}^{2}.
\]
Then, using (\ref{tw61}) to estimate the first term in the right hand side of
(\ref{tw88}), we see that $\mathbf{r}=o\left(  1\right)  $, as $N\rightarrow
0.$ Hence, from (\ref{tw84}) we attain (\ref{tw102}).
\end{proof}

\begin{proof}
[Proof of Theorem \ref{Th2}]To prove the first part of Theorem \ref{Th2} we
note that the minimizers$\ S_{N}$ of (\ref{tw42}) and the minimizers $R_{N}$
of (\ref{tw11}) are related by (\ref{tw116}).\ Moreover, by Lemma \ref{L1} we
see that $S_{N}$ is related to $Q_{\beta,N}$ by the equation
\begin{equation}
Q_{\beta,N}\left(  x\right)  =\left(  \tau_{\beta}S_{N}\right)  \left(
x\right)  , \label{tw118}%
\end{equation}
with $\tau_{\beta}$ given by (\ref{tw119}). Then, $R_{N}$ and $Q_{\beta,N}$
are related by (\ref{tw117}), and hence, the first part of Theorem \ref{Th2}
is consequence of Theorem \ref{T2}. To prove the second part we observe that
if $R_{N}$ satisfies (\ref{tw115}), $S_{N}$ solves (\ref{tw103}) with
$\eta=\frac{s\left(  s-1\right)  }{2}\theta_{N}.$ On the other hand, it
follows from Lemma \ref{L1} that the Lagrange multiplier $\eta$ corresponding
to $S_{N}$ is related to the Lagrange multiplier $\gamma\left(  \beta
,N\right)  $ that corresponds to $Q_{\beta,N}$ by the formula $\gamma\left(
\beta,N\right)  =\left(  \frac{2\beta}{s}\right)  ^{\frac{s}{s-1}}\left(
\eta+s-1\right)  .$ Then,
\[
\theta_{N}=\frac{2}{s}\left(  \left(  s-1\right)  ^{-1}\left(  \frac{2\beta
}{s}\right)  ^{-\frac{s}{s-1}}\gamma\left(  \beta,N\right)  -1\right)  .
\]
Therefore, the second part of Theorem \ref{Th2} follows from (\ref{tw102}).
\end{proof}

\section{Uniqueness of
traveling waves.}

We now turn to the proof of Theorem \ref{Th3}.\ In view of relations
(\ref{tw116}) and (\ref{tw118}), we need to show the unicity of the minimizers
$R_{N}\in\mathbf{R}_{N}.$ We aim to prove the following.

\begin{lemma}
\bigskip\label{L7}There exists $N_{0}>0$ with the following property: given
$0<N<N_{0}$ and $R_{N},\tilde{R}_{N}\in\mathbf{R}_{N},$ there exist $\phi
,y\in\mathbb{R}$ such that%
\[
\tilde{R}_{N}\left(  x\right)  =e^{i\phi}R_{N}\left(  x-y\right)  .
\]

\end{lemma}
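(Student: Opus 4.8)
The plan is to reduce — via \((\ref{tw116})\) and \((\ref{tw118})\) — to showing that two minimizers of \((\ref{tw11})\) of mass \(s_{0}\) coincide up to a translation and a phase, and to prove this by a coercivity argument near the limiting profile \(\mathcal{R}\). So let \(R_{N},\tilde R_{N}\in\mathbf{R}_{N}\). First I normalise \(R_{N}\) by minimising \(\|e^{i\gamma}R_{N}(\cdot-y)-\tilde R_{N}\|_{L^{2}}\) over \((\gamma,y)\): by Theorem \ref{T2} applied to \(\tilde R_{N}\) this distance is \(o_{N}(1)\), so for \(N\) small it is smaller than the value \(\sqrt{2s_{0}}\) approached as \(y\to\pm\infty\), hence the infimum is attained, and after replacing \(R_{N}\) by the optimal \(e^{i\gamma_{\ast}}R_{N}(\cdot-y_{\ast})\) I set \(w:=R_{N}-\tilde R_{N}\). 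Theorem \ref{T2} together with the uniform bounds \(\|R_{N}\|_{H^{r}}+\|\tilde R_{N}\|_{H^{r}}\le C(r)\) provided by its proof give \(\|w\|_{H^{r}}\to 0\) as \(N\to 0\) for every \(r\ge 0\); the criticality conditions in \((\gamma,y)\) give the \emph{exact} orthogonalities \(\langle w,i\tilde R_{N}\rangle=\langle w,\partial_{x}\tilde R_{N}\rangle=0\) (real \(L^{2}\) pairing); and the common mass constraint gives \(\langle w,\tilde R_{N}\rangle=-\tfrac12\|w\|_{L^{2}}^{2}\). It remains to show \(w\equiv 0\) for \(N\) small.

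Let \(\tilde\theta_{N}\) be the Lagrange multiplier of \(\tilde R_{N}\) in \((\ref{tw115})\) and let
\[
L_{N}:=n_{N}(D)+\tilde\theta_{N}-F'(\tilde R_{N}),\qquad F(z)=|z|^{2s}z ,
\]
be the linearisation of \((\ref{tw115})\) at \(\tilde R_{N}\). Since \(\tilde R_{N}\) minimises \(\mathcal{Y}_{N}\) under \(\int|v|^{2}=s_{0}\), the second-order optimality condition along constrained curves, combined with \(\mathcal{Y}_{N}'(\tilde R_{N})=-\tilde\theta_{N}\tilde R_{N}\), gives \(\langle L_{N}h,h\rangle\ge 0\) whenever \(\langle \tilde R_{N},h\rangle=0\): the negative direction of the Hessian is consumed by the mass constraint, which reflects the \(L^{2}\)-subcriticality of the limit \((\ref{tw75})\) (exponent \(2s+1<5\)). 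I next upgrade this to the quantitative bound
\[
\langle L_{N}h,h\rangle\ge c\,\|h\|_{L^{2}}^{2}\qquad\bigl(h\perp\{\tilde R_{N},\partial_{x}\tilde R_{N},i\tilde R_{N}\},\ \|h\|_{H^{r}}\le M\bigr),
\]
uniformly for \(N<N_{0}\), by a compactness/contradiction argument: a violating sequence, normalised in \(L^{2}\) and bounded in \(H^{r}\), loses mass neither at spatial infinity (there \(L_{N}\simeq n_{N}(D)+\tilde\theta_{N}\ge\tilde\theta_{N}>0\) as \(n_{N}\ge 0\)) nor at frequency infinity (the \(H^{r}\) bound forbids it), so by Lemma \ref{L6} and the expansion \(n_{N}(D)\to-\Delta\) one passes to the limit and produces a nonzero element of the kernel of \(-\Delta+\lambda(s)-F'(\mathcal{R})\) orthogonal to \(\{\mathcal{R},\partial_{x}\mathcal{R},i\mathcal{R}\}\), contradicting the nondegeneracy and the Weinstein-type coercivity of the subcritical ground state \(\mathcal{R}\) (cf. \cite{Weinstein} and Proposition \ref{P1}). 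Because \(w\perp\partial_{x}\tilde R_{N},\,i\tilde R_{N}\) exactly, \(\langle w,\tilde R_{N}\rangle=-\tfrac12\|w\|_{L^{2}}^{2}\), and \(L_{N}\tilde R_{N}=-2s|\tilde R_{N}|^{2s}\tilde R_{N}\) is bounded, projecting \(\tilde R_{N}\) off \(w\) and using \(\|w\|_{H^{r}}\to 0\) yields \(\langle L_{N}w,w\rangle\ge\tfrac{c}{2}\|w\|_{L^{2}}^{2}\) for \(N\) small.

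To close I exploit that \(R_{N}\) and \(\tilde R_{N}\) also have the same energy, \(\mathcal{Y}_{N}(R_{N})=\mathcal{Y}_{N}(\tilde R_{N})=Y(N)\). Expanding \(\mathcal{Y}_{N}(\tilde R_{N}+w)\) to second order — the quadratic part \(\tfrac12\langle n_{N}(D)v,v\rangle\) is exact, and the nonlinear remainder is cubic in \(w\), hence \(o(\|w\|_{L^{2}}^{2})\) since \(\|w\|_{L^{\infty}}\le C\|w\|_{H^{1}}\to 0\) — and using \(\mathcal{Y}_{N}'(\tilde R_{N})=-\tilde\theta_{N}\tilde R_{N}\) and \(\langle\tilde R_{N},w\rangle=-\tfrac12\|w\|_{L^{2}}^{2}\), the linear term cancels the \(\|w\|^{2}\) contribution of the Hessian and one is left with
\[
0=\mathcal{Y}_{N}(R_{N})-\mathcal{Y}_{N}(\tilde R_{N})=\tfrac12\langle L_{N}w,w\rangle+o(\|w\|_{L^{2}}^{2}),
\]
i.e. \(\langle L_{N}w,w\rangle=o(\|w\|_{L^{2}}^{2})\). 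Together with \(\langle L_{N}w,w\rangle\ge\tfrac{c}{2}\|w\|_{L^{2}}^{2}\) this forces \(\|w\|_{L^{2}}=0\), hence \(w\equiv 0\), for \(N<N_{0}\); unwinding the substitutions gives Lemma \ref{L7}. The threshold \(N_{0}\) is uniform, since Theorem \ref{T2}, the uniform Sobolev bounds and the coercivity of \(L_{N}\) hold for all minimizers.

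The main obstacle I foresee is the uniform coercivity of \(L_{N}\). Two features make it delicate. First, \(n_{N}(\xi)\) is comparable to \(|\xi|^{s}\) rather than \(\xi^{2}\) at high frequency, so \(n_{N}(D)\) is not a uniformly relatively compact perturbation of \(-\Delta\); the compactness argument works only because one restricts to functions of uniformly bounded \(H^{r}\) norm — available precisely because the bootstrap in the proof of Theorem \ref{T2} provides such bounds for all minimizers — and because Lemma \ref{L6} gives a frequency-localised comparison of \(n_{N}(D)\) with \(-\Delta\). Second, \(L_{N}\), like its limit, has one negative eigenvalue, so coercivity is only available after orthogonalising to \(\tilde R_{N}\); this is exactly where the variational structure (that \(\tilde R_{N}\) is a genuine constrained minimizer) and the \(L^{2}\)-subcriticality of \((\ref{tw75})\) are essential. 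Everything else — attainment of the minimal distance, the orthogonality relations, the energy expansion — is routine.
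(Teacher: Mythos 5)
Your proposal is essentially correct but proceeds along a genuinely different route from the paper, and it is worth spelling out the contrast. The paper's proof of Lemma \ref{L7} rests on the operator-norm invertibility estimate of Lemma \ref{L8}: the bound $\left\Vert f\right\Vert_{H^{s/2}}\le C\left(\left\Vert\mathcal L_{R_N}f\right\Vert_{H^{-s/2}}+\left\vert(f,iR_N)\right\vert+\left\vert(f,\nabla R_N)\right\vert\right)$, proved by a frequency splitting $f=f_1+r$ into a low-frequency block (where $n_N(D)\approx-\Delta$ and the known invertibility \eqref{tw80} of $\mathcal L$ is invoked) and a high-frequency block (where \eqref{tw47} makes $n_N(D)$ dominant). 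Only the two orthogonalities $(f,iR_N)=(f,\nabla R_N)=0$ are needed, because the paper uses the \emph{kernel} structure of $L_{\pm}$ (injectivity modulo $\nabla\mathcal R$ and $\mathcal R$, not sign-definiteness). The paper then fixes $\gamma,y$ by a Jacobian non-degeneracy/IFT argument, observes that both minimizers solve \eqref{tw60} so that $\left\Vert\mathcal L_{\tilde R_N}\varepsilon_N\right\Vert_{H^{-s/2}}=o(1)\left\Vert\varepsilon_N\right\Vert_{H^{s/2}}$ (the error is quadratic in $\varepsilon_N$, which is small), and concludes directly from \eqref{tw97}.

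You instead fix $\gamma,y$ by minimising an $L^{2}$ distance (equivalent, and arguably cleaner than the Jacobian argument), and replace the operator-norm bound by the \emph{quadratic form} coercivity $\langle L_Nh,h\rangle\ge c\left\Vert h\right\Vert_{L^{2}}^{2}$ on the triply orthogonal complement $\{\tilde R_N,\partial_x\tilde R_N,i\tilde R_N\}^{\perp}$, proved by a compactness/contradiction argument. This needs the extra orthogonality to $\tilde R_N$, which you do not have for $w$; you compensate by exploiting the \emph{equality of energies} $\mathcal Y_N(R_N)=\mathcal Y_N(\tilde R_N)$, which the paper never uses. The Taylor expansion together with $\mathcal Y_N'(\tilde R_N)=-\tilde\theta_N\tilde R_N$ and $\langle\tilde R_N,w\rangle=-\tfrac12\left\Vert w\right\Vert_{L^{2}}^{2}$ produces the identity $0=\tfrac12\langle L_Nw,w\rangle+o(\left\Vert w\right\Vert_{L^{2}}^{2})$, which contradicts coercivity unless $w\equiv 0$. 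This is a pretty cancellation and a genuinely variational mechanism, whereas the paper's argument is of implicit-function-theorem type. A mild caveat: your coercivity statement is only claimed under an a priori $H^{r}$ bound on $h$, which is harmless here (both minimizers satisfy such bounds by the bootstrap in Theorem \ref{T2}) but is weaker than the paper's unconditional resolvent estimate; and your parenthetical ``produces a nonzero element of the kernel'' is slightly loose --- what the compactness argument actually gives is a nonzero $h\perp\{\mathcal R,i\mathcal R,\partial_x\mathcal R\}$ with $\langle(-\Delta+\lambda(s)-F'(\mathcal R))h,h\rangle\le 0$ together with a strictly positive contribution from the escaping mass, which contradicts positive definiteness of the limit form (not merely non-degeneracy of the kernel). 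Finally, the ``no loss of mass at spatial infinity'' step deserves a line: it is best organized as $h_n=h+r_n$ with $r_n\rightharpoonup 0$, $r_n\to 0$ in $L^{2}_{\mathrm{loc}}$ (from the $H^{r}$ bound), so the potential term annihilates $r_n$ and $\langle L_{N_n}r_n,r_n\rangle\ge\tilde\theta_{N_n}\left\Vert r_n\right\Vert^{2}-o(1)$, giving the dichotomy you want. With those details filled in, the proof is sound.
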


Before proving Lemma \ref{L7} we present a lemma that is involved in its
proof. Consider some fixed $R_{N}\in\mathbf{R}_{N}$ and recall that $R_{N}$
satisfies equation (\ref{tw115}). We define the linearized operator
$\mathcal{L}_{R_{N}}:H^{s/2}\rightarrow H^{-s/2}$ close to $R_{N}$ by
\[
\mathcal{L}_{R_{N}}f:=n_{N}\left(  D\right)  f+\theta_{N}f-\left(  s+1\right)
\left\vert R_{N}\right\vert ^{2s}f-s\left(  \left\vert R_{N}\right\vert
^{2s-2}R_{N}^{2}\right)  {\overline f},\text{ \ \ }f\in H^{s/2}.
\]
Recall the notation $\left(  f,g\right)  =\operatorname{Re}\int f\overline
{g}.$ We now prove the following invertibility result for $\mathcal{L}_{R_{N}%
}.$

\begin{lemma}
\label{L8}There exists $N_{0}>0,$ such that for all $0<N<N_{0}$ and all
$R_{N}\in\mathbf{R}_{N}$ the estimate
\begin{equation}
\left\Vert f\right\Vert _{H^{s/2}}\leq C\left(  \left\Vert \mathcal{L}_{R_{N}%
}f\right\Vert _{H^{-s/2}}+\left\vert \left(  f,iR_{N}\right)  \right\vert
+\left\vert \left(  f,\nabla R_{N}\right)  \right\vert \right)  \label{tw76}%
\end{equation}
is true for all $f\in H^{s/2}$. Moreover, for all $F\in H^{-s/2}$ with
$\left(  F,iR_{N}\right)  =\left(  F,\nabla R_{N}\right)  =0,$ the problem
\[
\left\{
\begin{array}
[c]{c}%
\mathcal{L}_{R_{N}}f=F,\text{ \ }f\in H^{s/2},\\
\left(  f,iR_{N}\right)  =\left(  f,\nabla R_{N}\right)  =0,
\end{array}
\right.
\]
has a unique solution $f\in H^{s/2},$ and
\begin{equation}
\left\Vert f\right\Vert _{H^{s/2}}\leq C\left\Vert F\right\Vert _{H^{-s/2}}
\label{tw97}%
\end{equation}
holds.
\end{lemma}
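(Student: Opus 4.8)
The plan is to prove the a priori estimate \eqref{tw76} first, by a compactness/contradiction argument, and then to deduce the solvability statement and the bound \eqref{tw97} by soft Fredholm theory. The guiding idea is that, as $N\to0$, the minimizers $R_N$ converge to the ground state $\mathcal R$ of the local problem \eqref{tw75} (Theorem \ref{T2}), while the symbol $n_N(\xi)$ degenerates to $\xi^2$ at low frequencies, so that $\mathcal L_{R_N}$ ``converges'' to the linearized Schr\"odinger operator $L:=\mathcal L_{\mathcal R}$, whose non-degeneracy is classical. Concretely, writing $f=u+iv$ and using that $\mathcal R$ is real, $Lf=L_+u+iL_-v$ with $L_+=-\Delta+\lambda(s)-(2s+1)\mathcal R^{2s}$ and $L_-=-\Delta+\lambda(s)-\mathcal R^{2s}$; since $\mathcal R>0$ is the ground state of $L_-$ and $\nabla\mathcal R$ (which changes sign exactly once) the first excited state of $L_+$, one has $\ker L=\operatorname{span}_{\mathbb R}\{i\mathcal R,\nabla\mathcal R\}$, with $(i\mathcal R,\nabla\mathcal R)=0$ (see \cite{Weinstein},\cite{Cazenave}).

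To prove \eqref{tw76} I would argue by contradiction: if it fails there are $N_n\to0$, $R_n\in\mathbf R_{N_n}$ and $f_n$ with $\|f_n\|_{H^{s/2}}=1$ and $\|\mathcal L_{R_n}f_n\|_{H^{-s/2}}+|(f_n,iR_n)|+|(f_n,\nabla R_n)|\to0$. All quantities involved are invariant under phase rotations and translations, under which \eqref{tw115} is covariant and $\mathbf R_{N_n}$ invariant, so by Theorem \ref{T2} we may assume $R_n\to\mathcal R$ in $H^r$ for every $r\ge0$, and $\theta_{N_n}\to\lambda(s)$ by \eqref{tw102}. Writing $\mathcal L_{R_n}f_n=(n_{N_n}(D)+\theta_{N_n})f_n-V_nf_n-W_n\overline{f_n}$ with $V_n=(s+1)|R_n|^{2s}$, $W_n=s|R_n|^{2s-2}R_n^2$ bounded in every $L^q$, the key steps are: (i) Lemma \ref{L6} yields $n_{N_n}(\xi)+\theta_{N_n}\ge c\langle\xi\rangle^s$ uniformly in $n$, hence $\|(n_{N_n}(D)+\theta_{N_n})f_n\|_{H^{-s/2}}\ge c$; (ii) if $\{f_n\}$ vanishes in the concentration sense then $f_n\to0$ in $L^p$ for $p>2$, so $\|V_nf_n\|_{L^2}+\|W_nf_n\|_{L^2}\to0$ by H\"older and $\|\mathcal L_{R_n}f_n\|_{H^{-s/2}}\ge c-o(1)$, impossible; thus there are $y_n$ and $\delta>0$ with $\int_{|x-y_n|<1}|f_n|^2\ge\delta$ along a subsequence; (iii) if $y_n\to\infty$, then $R_n(\cdot+y_n)\to0$ in $L^\infty_{\mathrm{loc}}$ (since $R_n\to\mathcal R$ in $L^\infty$ and $\mathcal R$ decays), the potential terms vanish against test functions, and since $n_{N_n}(\xi)\to\xi^2$ pointwise with $n_{N_n}(\xi)\le C\langle\xi\rangle^2$ uniformly we have $n_{N_n}(D)\varphi\to-\Delta\varphi$ in $L^2$ for $\varphi\in\mathcal S$, so passing to the limit in the translated equation gives $(-\Delta+\lambda(s))\tilde f=0$ for the weak limit $\tilde f$ of $f_n(\cdot+y_n)$, which is nonzero since $\int_{|x|<1}|\tilde f|^2\ge\delta$ --- impossible. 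Hence $\{y_n\}$ is bounded, $f_n\rightharpoonup f_\infty\neq0$ in $H^{s/2}$, and passing to the limit in $\mathcal L_{R_n}f_n$ (now keeping the converging potential terms) gives $Lf_\infty=0$, while $(f_n,iR_n)\to(f_\infty,i\mathcal R)$, $(f_n,\nabla R_n)\to(f_\infty,\nabla\mathcal R)$ give $(f_\infty,i\mathcal R)=(f_\infty,\nabla\mathcal R)=0$; the non-degeneracy of $L$ then forces $f_\infty=0$, a contradiction, proving \eqref{tw76}.

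For the second part, fix $N$ small. I would observe that $n_N(D)+\theta_N:H^{s/2}\to H^{-s/2}$ is an isomorphism --- its symbol satisfies $c_N\langle\xi\rangle^s\le n_N(\xi)+\theta_N\le C_N\langle\xi\rangle^s$ by Lemma \ref{L6} --- that $Kf:=(s+1)|R_N|^{2s}f+s|R_N|^{2s-2}R_N^2\overline f$ is compact from $H^{s/2}$ to $H^{-s/2}$ (multiplication by bounded functions decaying at infinity, as $R_N\in H^{s/2}\hookrightarrow C_0$), and that $\mathcal L_{R_N}=(n_N(D)+\theta_N)-K$ is self-adjoint for the real pairing $(f,g)=\operatorname{Re}\int f\bar g$; hence $\mathcal L_{R_N}$ is Fredholm of index $0$. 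Differentiating \eqref{tw115} in phase and in $x$ shows $iR_N,\nabla R_N\in\ker\mathcal L_{R_N}$; by \eqref{tw76} the kernel meets $\{iR_N,\nabla R_N\}^\perp$ only at $0$, and since for $N$ small the Gram matrix of $\{iR_N,\nabla R_N\}$ is close to $\operatorname{diag}(\|\mathcal R\|_{L^2}^2,\|\nabla\mathcal R\|_{L^2}^2)$, hence invertible, $\ker\mathcal L_{R_N}=\operatorname{span}\{iR_N,\nabla R_N\}$. Self-adjointness then gives $\operatorname{Range}\mathcal L_{R_N}=\{F\in H^{-s/2}:(F,iR_N)=(F,\nabla R_N)=0\}$, so for such $F$ there is a unique $f\in H^{s/2}$ with $\mathcal L_{R_N}f=F$ and $(f,iR_N)=(f,\nabla R_N)=0$; applying \eqref{tw76} to this $f$ yields \eqref{tw97}.

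The hard part is \eqref{tw76}. Beyond the (classical) spectral non-degeneracy of $L$, the delicate point is the passage to the limit for the non-local operators $n_N(D)$: one must use at the same time the quantitative lower bound $n_N(\xi)+\theta_N\gtrsim\langle\xi\rangle^s$ of Lemma \ref{L6}, which gives the $H^{s/2}$-coercivity of the leading part, and the low-frequency expansion $n_N(\xi)\to\xi^2$, which gives the weak convergence $\mathcal L_{R_N}\to L$ --- these two features live on different frequency scales, and in particular $\mathcal L_{R_N}$ does not converge to $L$ in operator norm. One also has to rule out escape of mass of $\{f_n\}$ to spatial infinity, where in the limit $\mathcal L_{R_N}$ reduces to the invertible operator $-\Delta+\lambda(s)$. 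The Fredholm step is then routine.
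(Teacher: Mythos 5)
Your proposal is correct, but it takes a genuinely different route from the paper for the a priori estimate \eqref{tw76}. The paper's proof is a direct, quantitative Fourier-space splitting: it decomposes $f=f_1+r$ with $\hat f_1$ supported on $|\xi|\le\kappa^{-\alpha}$ and $\hat r$ on the complement, bounds $\|\mathcal L_{R_N}f\|_{H^{-s/2}}^2$ from below by $\tfrac12\|\mathcal Lf_1\|_{H^{-1}}^2+\|n_N(D)r\|_{H^{-s/2}}^2$ minus error terms, then applies the coercivity \eqref{tw80} of the limiting linearized operator $\mathcal L$ to $f_1$ and the Fourier lower bound \eqref{tw47} to $r$, and finally shows all error terms (differences $\mathcal L_{R_N}-\mathcal L$, cross terms, truncation tails) are $o(1)\|f\|_{H^{s/2}}^2$ as $N\to0$. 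Your proof instead runs a compactness/contradiction argument: assume failure along $N_n\to0$, rule out vanishing via the refined Sobolev/concentration step, rule out escape to spatial infinity by passing to the translated weak limit where the potentials disappear and the operator degenerates to $-\Delta+\lambda(s)$, and conclude by passing to the untranslated limit and invoking the non-degeneracy of $\mathcal L$. Both proofs use the same essential spectral input (classification of $\ker L_\pm$, equivalently \eqref{tw80}), and both exploit the same two features of $n_N$ (the uniform lower bound $n_N+\theta_N\gtrsim\langle\xi\rangle^s$ for coercivity, the pointwise convergence $n_N(\xi)\to\xi^2$ for the limiting operator); the difference is how the transfer from $\mathcal L$ to $\mathcal L_{R_N}$ is organized. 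The paper's method is self-contained and yields an explicit smallness rate for the errors (tied to $\kappa^{(s-1)\alpha}$ etc.), while yours is softer and shorter but relies on concentration-compactness and gives no rate. Your handling of the second (Fredholm) part is the same as the paper's, just written out in more detail. A small technical remark: in step (ii) of your vanishing case, the relevant bound is $\|V_nf_n\|_{L^2}\le\|V_n\|_{L^q}\|f_n\|_{L^p}$ with $p>2$ (not an $L^\infty$--$L^2$ H\"older), which is fine since $V_n=(s+1)|R_n|^{2s}$ is bounded in every $L^q$, $1\le q\le\infty$, uniformly in $n$; you implicitly used exactly this, but it is worth being explicit since the $L^2$ norm of $f_n$ itself does not vanish.
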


\begin{proof}
Let $\mathcal{L}:H^{1}\rightarrow H^{-1}$ be the linearized operator for the
equation (\ref{tw75}) around $\mathcal{R}:$%
\[
\mathcal{L}f:=\left\vert D\right\vert ^{2}f+\lambda\left(  s\right)  f-\left(
s+1\right)  \mathcal{R}^{2s}f-s\mathcal{R}^{2s}{\overline f},\text{ \ \ }f\in
H^{1}.
\]
We write $f\in H^{1}$ as $f=h+ig,$ with real $h\ $and $g.$ Then,%
\[
\mathcal{L}f=L_{+}h+iL_{-}g
\]
where
\[
L_{+}h:=\left\vert D\right\vert ^{2}h+\lambda\left(  s\right)  h-\left(
2s+1\right)  \mathcal{R}^{2s}h
\]
and%
\[
L_{-}g:=\left\vert D\right\vert ^{2}g+\lambda\left(  s\right)  g-\mathcal{R}%
^{2s}g.
\]
Thus,
\begin{equation}
\left(  \mathcal{L}f,f\right)  =\left(  L_{+}h,h\right)  +\left(
L_{-}g,g\right)  , \label{tw78}%
\end{equation}
for real functions $h,g\in H^{1}.$ It is known (see Lemmas 2.1 and 2.2 of
\cite{Nakanishi}) that $\ker L_{+}=\operatorname*{span}\{\nabla\mathcal{R}\},$
$\left.  L_{+}\right\vert _{\left\{  \mathcal{R}\right\}  ^{\perp}}\geq0$ and
$\left.  L_{-}\right\vert _{\left\{  \mathcal{R}\right\}  ^{\perp}}>0.$ Then,
from (\ref{tw78}) we get%
\[
\left(  \mathcal{L}f,f\right)  \geq c\left\Vert f\right\Vert _{H^{1}}^{2},
\]
for all $f\in H^{1}$ such that $\left\vert \left(  f,\mathcal{R}\right)
\right\vert +\left\vert \left(  f,i\mathcal{R}\right)  \right\vert +\left\vert
\left(  f,\nabla\mathcal{R}\right)  \right\vert =0.$ Moreover, using the
relation $L_{+}\mathcal{R}=-2s\left(  \left\vert D\right\vert ^{2}%
+\lambda\left(  s\right)  \right)  \mathcal{R},$ we see that
\begin{equation}
\left\Vert f\right\Vert _{H^{1}}\leq C\left(  \left\Vert \mathcal{L}%
f\right\Vert _{H^{-1}}+\left\vert \left(  f,i\mathcal{R}\right)  \right\vert
+\left\vert \left(  f,\nabla\mathcal{R}\right)  \right\vert \right)  ,
\label{tw80}%
\end{equation}
for all $f\in H^{1}$.

Now, in order to prove (\ref{tw76}), let us compare $\mathcal{L}_{R_{N}}$ and
$\mathcal{L}$. Theorem \ref{T2} shows that there exist $x\left(  N\right)
,\gamma\left(  N\right)  \in\mathbb{R}$ such that $\tilde{R}_{N}%
:=e^{i\gamma\left(  N\right)  }R_{N}\left(  \cdot+x\left(  N\right)  \right)
\rightarrow\mathcal{R},$ in $H^{r},$ $r\geq0,$ as $N\rightarrow0.$ Since the
Sobolev norms are invariant under translations and phase-shift, to prove
(\ref{tw76}) we may assume that $R_{N}\ $itself converges to $\mathcal{R},$ in
$H^{r},$ $r\geq0,$ as $N\rightarrow0.$ We fix such $R_{N}\in\mathbf{R}_{N}$
and denote $\mathcal{L}_{N}:=\mathcal{L}_{R_{N}}$. Let $\eta_{N}\in L^{\infty
}$ be such that $\eta_{N}\left(  \xi\right)  =1,$ for $\left\vert
\xi\right\vert \leq\kappa^{-\alpha},$ $\kappa=N^{\frac{s}{2-s}},$
$0<\alpha<1/3,$ and $\eta_{N}\left(  \xi\right)  =0,$ for $\left\vert
\xi\right\vert \geq\kappa^{-\alpha}.$ We decompose $f\in H^{s/2}$ as
\begin{equation}
f=f_{1}+r,\label{tw125}%
\end{equation}
with%
\begin{equation}
f_{1}:=\mathcal{F}^{-1}\eta_{N}\mathcal{F}f\text{ \ and \ }r:=\mathcal{F}%
^{-1}\left(  1-\eta_{N}\right)  \mathcal{F}f.\label{tw82}%
\end{equation}
We have%
\begin{equation}
\left.
\begin{array}
[c]{c}%
\left\Vert \mathcal{L}_{N}f\right\Vert _{H^{-s/2}}^{2}=\left\Vert
\mathcal{L}_{N}f_{1}\right\Vert _{H^{-s/2}}^{2}+\left\Vert \mathcal{L}%
_{N}r\right\Vert _{H^{-s/2}}^{2}+2\left(  \left\langle D\right\rangle
^{-s}\mathcal{L}_{N}f_{1},\mathcal{L}_{N}r\right)  \\
\geq\left\Vert \mathcal{L}_{N}f_{1}\right\Vert _{H^{-s/2}}^{2}+\left\Vert
\mathcal{L}_{N}r\right\Vert _{H^{-s/2}}^{2}-2\left\vert \left(  \left\langle
D\right\rangle ^{-s}\mathcal{L}_{N}f_{1},\mathcal{L}_{N}r\right)  \right\vert
.
\end{array}
\right.  \label{tw124}%
\end{equation}
We denote%
\[
\mathcal{D=}\left(  \left\langle D\right\rangle ^{-s}n_{N}\left(  D\right)
\right)  ^{1/2}.
\]
Observe that%
\begin{equation}
\left.
\begin{array}
[c]{c}%
\left\vert \left(  \left\langle D\right\rangle ^{-s}\mathcal{L}_{N}%
f_{1},\mathcal{L}_{N}r\right)  \right\vert \leq\\
\leq%
{\displaystyle\int}
\left\vert \mathcal{D}f_{1}\right\vert \left\vert \mathcal{D}\left(  \left(
s+1\right)  \left\vert R_{N}\right\vert ^{2s}r+s\left(  \left\vert
R_{N}\right\vert ^{2s-2}R_{N}^{2}\right)  {\overline{r}}\right)  \right\vert
\\
+%
{\displaystyle\int}
\left\vert \mathcal{D}r\right\vert \left\vert \mathcal{D}\left(  \left(
s+1\right)  \left\vert R_{N}\right\vert ^{2s}f_{1}+s\left(  \left\vert
R_{N}\right\vert ^{2s-2}R_{N}^{2}\right)  {\overline{f}}_{1}\right)
\right\vert \\
+C%
{\displaystyle\int}
\left(  1+\left\vert \theta_{N}\right\vert +\left\vert R_{N}\right\vert
^{4s}\right)  \left\vert f_{1}\right\vert \left\vert r\right\vert
\end{array}
\right.  \label{tw121}%
\end{equation}
and%
\begin{equation}
\left.
\begin{array}
[c]{c}%
\left\Vert \mathcal{L}_{N}r\right\Vert _{H^{-s/2}}^{2}\geq\left\Vert
n_{N}\left(  D\right)  r\right\Vert _{H^{-s/2}}^{2}\\
-2%
{\displaystyle\int}
\left\vert \mathcal{D}r\right\vert \left\vert \mathcal{D}\left(  \theta
_{N}r-\left(  s+1\right)  \left\vert R_{N}\right\vert ^{2s}r-s\left(
\left\vert R_{N}\right\vert ^{2s-2}R_{N}^{2}\right)  {\overline{r}}\right)
\right\vert .
\end{array}
\right.  \label{tw122}%
\end{equation}
Also, we have%
\begin{equation}
\left\Vert \mathcal{L}_{N}f_{1}\right\Vert _{H^{-1}}^{2}\geq\frac{1}%
{2}\left\Vert \mathcal{L}f_{1}\right\Vert _{H^{-1}}^{2}-\left\Vert \left(
\mathcal{L}_{N}-\mathcal{L}\right)  f_{1}\right\Vert _{H^{-1}}^{2}%
.\label{tw123}%
\end{equation}
Using (\ref{tw121}), (\ref{tw122}) and (\ref{tw123}) in (\ref{tw124}) we get%
\begin{equation}
\left.  \left\Vert \mathcal{L}_{N}f\right\Vert _{H^{-s/2}}^{2}\geq\frac{1}%
{2}\left\Vert \mathcal{L}f_{1}\right\Vert _{H^{-1}}^{2}+\left\Vert
n_{N}\left(  D\right)  r\right\Vert _{H^{-s/2}}^{2}-\mathbf{r}_{1}\left(
f\right)  -\mathbf{r}_{2}\left(  f\right)  .\right.  \label{tw79}%
\end{equation}
where%
\[
\mathbf{r}_{1}\left(  f\right)  :=\left\Vert \left(  \mathcal{L}%
_{N}-\mathcal{L}\right)  f_{1}\right\Vert _{H^{-1}}^{2},
\]
and%
\[
\left.  \mathbf{r}_{2}\left(  f\right)  :=\mathbf{r}_{21}\left(  f\right)
+\mathbf{r}_{22}\left(  f\right)  +\mathbf{r}_{23}\left(  f\right)
+\mathbf{r}_{24}\left(  f\right)  ,\right.
\]
with%
\[
\mathbf{r}_{21}\left(  f\right)  :=2%
{\displaystyle\int}
\left\vert \mathcal{D}f_{1}\right\vert \left\vert \mathcal{D}\left(  \left(
s+1\right)  \left\vert R_{N}\right\vert ^{2s}r+s\left(  \left\vert
R_{N}\right\vert ^{2s-2}R_{N}^{2}\right)  {\overline{r}}\right)  \right\vert ,
\]%
\[
\mathbf{r}_{22}\left(  f\right)  :=2%
{\displaystyle\int}
\left\vert \mathcal{D}r\right\vert \left\vert \mathcal{D}\left(  \left(
s+1\right)  \left\vert R_{N}\right\vert ^{2s}f_{1}+s\left(  \left\vert
R_{N}\right\vert ^{2s-2}R_{N}^{2}\right)  {\overline{f}}_{1}\right)
\right\vert
\]%
\[
\left.  \mathbf{r}_{23}\left(  f\right)  :=2%
{\displaystyle\int}
\left\vert \mathcal{D}r\right\vert \left\vert \mathcal{D}\left(  \theta
_{N}r-\left(  s+1\right)  \left\vert R_{N}\right\vert ^{2s}r-s\left(
\left\vert R_{N}\right\vert ^{2s-2}R_{N}^{2}\right)  {\overline{r}}\right)
\right\vert .\right.
\]
and%
\[
\mathbf{r}_{24}\left(  f\right)  :=C%
{\displaystyle\int}
\left(  1+\left\vert \theta_{N}\right\vert +\left\vert R_{N}\right\vert
^{4s}\right)  \left\vert f_{1}\right\vert \left\vert r\right\vert ,
\]
It follows from (\ref{tw47}) that
\begin{equation}
\left\Vert n_{N}\left(  D\right)  r\right\Vert _{H^{-s/2}}^{2}\geq c\left\Vert
r\right\Vert _{H^{s/2}}^{2}.\label{tw126}%
\end{equation}
Moreover, from (\ref{tw80}) by (\ref{tw125}) we obtain%
\[
\left.  \frac{1}{2}\left\Vert \mathcal{L}f_{1}\right\Vert _{H^{-1}}^{2}\geq
c_{1}\left\Vert f_{1}\right\Vert _{H^{1}}^{2}-\left(  \left\vert \left(
f,i\mathcal{R}\right)  \right\vert ^{2}+\left\vert \left(  f,\nabla
\mathcal{R}\right)  \right\vert ^{2}+\left\vert \left(  r,i\mathcal{R}\right)
\right\vert ^{2}+\left\vert \left(  r,\nabla\mathcal{R}\right)  \right\vert
^{2}\right)  ,\right.
\]
for some $c_{1}>0.$ Then, using (\ref{tw126}) and $\left\Vert f_{1}\right\Vert
_{H^{1}}^{2}\geq c\left\Vert f_{1}\right\Vert _{H^{s/2}}^{2}$ ($s<2$), from
(\ref{tw79}) we deduce%
\begin{equation}
\left.
\begin{array}
[c]{c}%
\left\Vert \mathcal{L}_{N}f\right\Vert _{H^{-s/2}}^{2}\geq c_{2}\left\Vert
f\right\Vert _{H^{s/2}}^{2}-\left\vert \left(  f,i\mathcal{R}\right)
\right\vert ^{2}-\left\vert \left(  f,\nabla\mathcal{R}\right)  \right\vert
^{2}\\
-\mathbf{r}_{1}\left(  f\right)  -\mathbf{r}_{2}\left(  f\right)
-\mathbf{r}_{3}\left(  f\right)  ,
\end{array}
\right.  \label{tw81}%
\end{equation}
for some $c_{2}>0,$ where%
\[
\mathbf{r}_{3}\left(  f\right)  :=\left\vert \left(  r,i\mathcal{R}\right)
\right\vert ^{2}+\left\vert \left(  r,\nabla\mathcal{R}\right)  \right\vert
^{2}.
\]
Let us prove that there exists $N_{0}>0,$ such that for any $0<N<N_{0}$
\begin{equation}
\mathbf{r}_{1}\left(  f\right)  +\mathbf{r}_{2}\left(  f\right)
+\mathbf{r}_{3}\left(  f\right)  \leq\frac{c_{2}}{2}\left\Vert f\right\Vert
_{H^{s/2}}^{2},\label{tw90}%
\end{equation}
for all $f\in H^{1},$ with $c_{2}>0$ given by (\ref{tw81}). By the definition
(\ref{tw82}) of $r$ we have%
\begin{equation}
\left\Vert r\right\Vert _{H^{p}}^{2}=%
{\displaystyle\int\limits_{\mathbb{R}}}
\left\langle \xi\right\rangle ^{2p}\left\vert \hat{r}\left(  \xi\right)
\right\vert ^{2}d\xi\leq C\kappa^{\left(  s-2p\right)  \alpha}\left\Vert
f\right\Vert _{H^{s/2}}^{2},\text{ for }p<s/2.\label{tw83}%
\end{equation}
Then,
\begin{equation}
\mathbf{r}_{3}\left(  f\right)  \leq C\kappa^{s\alpha}\left\Vert f\right\Vert
_{H^{s/2}}^{2}.\label{tw91}%
\end{equation}
As $R_{N}$ converges to $\mathcal{R},$ in $H^{r},$ $r\geq0,$ as $N\rightarrow
0,$ there exists $N_{0}>0,$ such that
\begin{equation}
\left\Vert R_{N}\right\Vert _{H^{r}}\leq C,\label{tw96}%
\end{equation}
uniformly on $0<N<N_{0}.$ Then, via Sobolev theorem, using (\ref{tw86}) we
estimate $\mathbf{r}_{21}\left(  f\right)  $ as%
\[
\left.  \mathbf{r}_{21}\left(  f\right)  \leq C\left\Vert r\right\Vert
_{H^{1-s/2}}\left\Vert f_{1}\right\Vert _{H^{1-s/2}}.\right.
\]
Thus, taking into account (\ref{tw83}) with $p=1-s/2$, we see that%
\[
\mathbf{r}_{21}\left(  f\right)  \leq C\kappa^{\left(  s-1\right)  \alpha
}\left\Vert f\right\Vert _{H^{s/2}}\left\Vert f_{1}\right\Vert _{H^{1-s/2}%
}\leq C\kappa^{\left(  s-1\right)  \alpha}\left\Vert f\right\Vert _{H^{s/2}%
}^{2}.
\]
Similarly we estimate $\mathbf{r}_{22}\left(  f\right)  $ and $\mathbf{r}%
_{23}\left(  f\right)  $ (by using also (\ref{tw87})). Then, noting that by
(\ref{tw83}), (\ref{tw96}) and (\ref{tw87}), $\mathbf{r}_{24}\left(  f\right)
\leq C\kappa^{s\alpha/2}\left\Vert f\right\Vert _{H^{s/2}}^{2},$ we arrive to%
\begin{equation}
\mathbf{r}_{2}\left(  f\right)  \leq C\kappa^{\left(  s-1\right)  \alpha
}\left\Vert f\right\Vert _{H^{s/2}}^{2}.\label{tw92}%
\end{equation}
Let us consider $\mathbf{r}_{1}\left(  f\right)  .$ Observe that%
\[
\left.
\begin{array}
[c]{c}%
\left(  \mathcal{L}_{N}-\mathcal{L}\right)  f_{1}=\left(  n_{N}\left(
D\right)  -\left\vert D\right\vert ^{2}\right)  f_{1}+\left(  \theta
_{N}-\lambda\left(  s\right)  \right)  f_{1}\\
-\left(  s+1\right)  \left(  \left\vert R_{N}\right\vert ^{2s}-\mathcal{R}%
^{2s}\right)  f_{1}-s\left(  \left\vert R_{N}\right\vert ^{2s-2}R_{N}%
^{2}-\mathcal{R}^{2s}\right)  {\overline{f}}_{1}.
\end{array}
\right.
\]
Then, as $R_{N}$ converges to $\mathcal{R},$ by the definition (\ref{tw82}) of
$f_{1}$, (\ref{tw61}) and (\ref{tw102}), as $\kappa=N^{\frac{s}{2-s}}$ we
obtain
\begin{equation}
\mathbf{r}_{1}\left(  f\right)  =o\left(  1\right)  \left\Vert f\right\Vert
_{H^{s/2}}^{2},\text{ as }N\rightarrow0.\label{tw93}%
\end{equation}
Using (\ref{tw91}), (\ref{tw92}) and (\ref{tw93}), we attain (\ref{tw90}%
).\ Introducing (\ref{tw90}) into (\ref{tw81}) we get%
\begin{equation}
\left.  \left\Vert f\right\Vert _{H^{s/2}}\leq C\left(  \left\Vert
\mathcal{L}_{N}f\right\Vert _{H^{-s/2}}+\left\vert \left(  f,i\mathcal{R}%
\right)  \right\vert +\left\vert \left(  f,\nabla\mathcal{R}\right)
\right\vert \right)  .\right.  \label{tw94}%
\end{equation}
Since $R_{N}$ converges to $\mathcal{R},$
\[
\left\vert \left(  f,i\left(  \mathcal{R}-R_{N}\right)  \right)  \right\vert
+\left\vert \left(  f,\nabla\left(  \mathcal{R}-R_{N}\right)  \right)
\right\vert \leq o\left(  1\right)  \left\Vert f\right\Vert _{H^{s/2}},
\]
as $N\rightarrow0.$ Hence, $\left\vert \left(  f,i\mathcal{R}\right)
\right\vert +\left\vert \left(  f,\nabla\mathcal{R}\right)  \right\vert $ in
(\ref{tw94}) may be replaced by $\left\vert \left(  f,iR_{N}\right)
\right\vert +\left\vert \left(  f,\nabla R_{N}\right)  \right\vert .$
Therefore, we arrive to (\ref{tw76}). Now, note that (\ref{tw76}) implies
$\ker\mathcal{L}_{N}=\operatorname*{span}\left\{  iR_{N},\nabla R_{N}\right\}
$. Then, the second statement of Lemma \ref{L8} follows from Fredholm
alternative applied to the operator $\left(  n_{N}\left(  D\right)
+\theta_{N}\right)  ^{-1}\mathcal{L}_{R_{N}}$.
\end{proof}

\begin{proof}
[Proof of Lemma \ref{L7}]Without loss of generality we suppose that $R_{N}$
and $\tilde{R}_{N}$ tend to $\mathcal{R},$ as $N\rightarrow0.$ For any
$\gamma,y\in\mathbb{R}$ we define%
\[
\varepsilon_{N}\left(  x,\gamma,y\right)  :=\tilde{R}_{N}\left(  x\right)
-e^{i\gamma}R_{N}\left(  x-y\right)
\]
and%
\[
f_{N}\left(  \gamma,y\right)  :=\left(  \varepsilon_{N}\left(  x,\gamma
,y\right)  ,i\tilde{R}_{N}\right)  ,\text{ }g_{N}\left(  \gamma,y\right)
:=\left(  \varepsilon_{N}\left(  x,\gamma,y\right)  ,\nabla\tilde{R}%
_{N}\right)  .
\]
These functions are smooth with respect to $\gamma$ and $y.$ Let us denote by
$\mathcal{J}_{N}\left(  \gamma,y\right)  $ the Jacobian matrix of $f_{N}$ and
$g_{N}$ at $\left(  \gamma,y\right)  .$ As $R_{N}$ converges to $\mathcal{R},$
in $H^{r},$ $r\geq0,$ as $N\rightarrow0,$ there is $N_{0}>0,$ such that for
all $0<N<$ $N_{0},$%
\[
\left\vert \det\mathcal{J}_{N}\left(  \gamma,y\right)  \right\vert \geq
\frac{1}{2}\left\Vert \mathcal{R}\right\Vert _{L^{2}}^{2}\left\Vert
\nabla\mathcal{R}\right\Vert _{L^{2}}^{2},
\]
for $\left(  \gamma,y\right)  $ in some neighborhood of $\left(  0,0\right)  $
(dependent only on $N_{0}$). Then, as $f_{N}\left(  0,0\right)  $ and
$g_{N}\left(  0,0\right)  $ tend to $0$, as $N\rightarrow0,$ we conclude that
there are $\gamma\left(  N\right)  ,y\left(  N\right)  \in\mathbb{R}$ such
that
\begin{equation}
f_{N}\left(  \gamma\left(  N\right)  ,y\left(  N\right)  \right)
=g_{N}\left(  \gamma\left(  N\right)  ,y\left(  N\right)  \right)  =0.
\label{tw95}%
\end{equation}
Since $R_{N}$ and $\tilde{R}_{N}$ satisfy equation (\ref{tw60}) and $R_{N}$
converges to $\mathcal{R}$, we have%
\[
\left\Vert \mathcal{L}_{\tilde{R}_{N}}\varepsilon_{N}\left(  \cdot
,\gamma\left(  N\right)  ,y\left(  N\right)  \right)  \right\Vert _{H^{-s/2}%
}\leq o\left(  1\right)  \left\Vert \varepsilon_{N}\left(  \cdot,\gamma\left(
N\right)  ,y\left(  N\right)  \right)  \right\Vert _{H^{s/2}}.
\]
Therefore, using (\ref{tw97}) we conclude that $\varepsilon_{N}\left(
x,\gamma\left(  N\right)  ,y\left(  N\right)  \right)  \equiv0.$ Lemma
\ref{L7} is proved.
\end{proof}

\section{Spatial asymptotics
of travelling waves.}

This Section is devoted to the proof of Theorem \ref{Th4}.\ For this purpose
we use relations (\ref{tw119}) and (\ref{tw116}) and consider the function
$R_{N}$ which satisfies the equation (\ref{tw115}). Taking the Fourier
transform in the both sides of (\ref{tw115}) we have
\[
n_{N}\left(  \xi\right)  \hat{R}_{N}\left(  \xi\right)  +\theta_{N}\hat{R}%
_{N}\left(  \xi\right)  =\left(  \mathcal{F}\left(  \left\vert R_{N}%
\right\vert ^{2s}R_{N}\right)  \right)  \left(  \xi\right)  ,
\]
where we recall that%
\[
n_{N}\left(  \xi\right)  =\frac{2n\left(  N^{\frac{s}{2-s}}\xi\right)
}{s\left(  s-1\right)  N^{\frac{2s}{2-s}}}=\frac{2\left(  \left\vert
N^{\frac{s}{2-s}}\xi+1\right\vert ^{s}-sN^{\frac{s}{2-s}}\xi-1\right)
}{s\left(  s-1\right)  N^{\frac{2s}{2-s}}}.
\]
(Here we used formulae (\ref{tw127}) and (\ref{tw114})). Then, \
\begin{equation}
R_{N}=m_{N}\ast\left(  \left\vert R_{N}\right\vert ^{2s}R_{N}\right)
\label{tw148}%
\end{equation}
where
\[
m_{N}\left(  x\right)  :=\left(  \mathcal{F}^{-1}\left(  \frac{1}{n_{N}\left(
\cdot\right)  +\theta_{N}}\right)  \right)  \left(  x\right)  .
\]
We begin by studying the asymptotics of the function $m_{N}\left(  x\right)
.$ Recall that%
\[
\mathcal{C}_{1}=\sqrt{\frac{\pi}{2\lambda\left(  s\right)  }}\text{ \ and
}\mathcal{C}_{2}=\dfrac{\left(  si^{s+1}+\left(  -i\right)  ^{s+1}\right)
e^{-i\frac{x}{\kappa}}}{2\sqrt{2\pi}\left(  s-1\right)  }\Gamma\left(
s\right)  .
\]
($\Gamma\left(  s\right)  $ denotes the Gamma function.) We prove the following.

\begin{lemma}
\label{L10}The following expansion is true%
\begin{equation}
\left.
\begin{array}
[c]{c}%
m_{N}\left(  x\right)  =\mathcal{C}_{1}e^{-\sqrt{\lambda\left(  s\right)
}\left\vert x\right\vert }+\mathcal{C}_{2}\frac{N^{\frac{s\left(  2+s\right)
}{2-s}}}{\left\vert x\right\vert ^{s+1}}+o_{N}\left(  1\right)  e^{-\sqrt
{\lambda\left(  s\right)  }\left\vert x\right\vert }\\
+\left(  o_{N}\left(  1\right)  +o_{\left\vert x\right\vert }\left(  1\right)
\right)  \frac{N^{\frac{s\left(  2+s\right)  }{2-s}}}{\left\vert x\right\vert
^{s+1}},
\end{array}
\right.  \label{as45}%
\end{equation}
for $\left\vert x\right\vert \rightarrow\infty$ and $N\rightarrow0,$ where
$o_{N}\left(  1\right)  \rightarrow0,$ as $N\rightarrow0,$ and $o_{\left\vert
x\right\vert }\left(  1\right)  \rightarrow0,$ as $\left\vert x\right\vert
\rightarrow\infty.$
\end{lemma}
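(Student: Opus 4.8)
The plan is to work from the definition of $m_{N}$ given just above the statement, i.e.
\[
m_{N}\left(  x\right)  =\frac{1}{\sqrt{2\pi}}\int_{\mathbb{R}}\frac{e^{ix\xi}
}{n_{N}\left(  \xi\right)  +\theta_{N}}\,d\xi ,\qquad n_{N}\left(  \xi\right)
=\frac{2\,n\left(  \kappa\xi\right)  }{s\left(  s-1\right)  \kappa^{2}},\quad
\kappa=N^{\frac{s}{2-s}},
\]
and to read the asymptotics of this oscillatory integral off the two features of the symbol $\left(  n_{N}+\theta_{N}\right)  ^{-1}$: it is $C^{\infty}$ on $\mathbb{R}$ and analytic near $\xi=0$ — there $\kappa\xi$ stays away from $-1$, and by Lemma \ref{L6} $n_{N}\left(  \xi\right)  =\xi^{2}+O\bigl(\kappa\left\vert \xi\right\vert ^{3}\bigr)$ — while its only non-smooth point is $\xi=-1/\kappa$, where $n$ inherits the $C^{1}\!\setminus\! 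C^{2}$ regularity of $t\mapsto\left\vert t\right\vert ^{s}$, $1<s<2$, and where the denominator never vanishes. I therefore fix $\delta>0$ small and a smooth partition $1=\psi_{0}+\psi_{\ast}+\psi_{\flat}$ with $\mathrm{supp}\,\psi_{0}\subset\{\left\vert \xi\right\vert \leq\delta/\kappa\}$, $\psi_{\ast}$ supported in a $\delta/\kappa$-neighbourhood of $\xi=-1/\kappa$, and $\psi_{\flat}=1-\psi_{0}-\psi_{\ast}$, and split $m_{N}=m_{N}^{0}+m_{N}^{\ast}+m_{N}^{\flat}$. The three pieces will produce, respectively, the exponential term, the algebraic term, and an error.

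For $m_{N}^{0}$, on $\mathrm{supp}\,\psi_{0}$ I replace $\left(  n_{N}+\theta_{N}\right)  ^{-1}$ by $\left(  \xi^{2}+\theta_{N}\right)  ^{-1}$, the error being $O\bigl(\kappa\left\vert \xi\right\vert ^{3}\bigr)\left(  \xi^{2}+\theta_{N}\right)  ^{-1}\left(  n_{N}+\theta_{N}\right)  ^{-1}$ there. By residue calculus $\mathcal{F}^{-1}\!\bigl((\xi^{2}+\theta_{N})^{-1}\bigr)(x)=\sqrt{\tfrac{\pi}{2\theta_{N}}}\,e^{-\sqrt{\theta_{N}}\left\vert x\right\vert }$, while the complementary cut-off $\left(  1-\psi_{0}\right)  \left(  \xi^{2}+\theta_{N}\right)  ^{-1}$ and the $O\bigl(\kappa\left\vert \xi\right\vert ^{3}\bigr)$-correction are $C^{\infty}$ with all $\xi$-derivatives in $L^{1}$ and with gains in $\kappa$, so repeated integration by parts shows they contribute $o_{N}\left(  1\right)$ times a rapidly decaying function of $x$. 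Hence $m_{N}^{0}\left(  x\right)  =\sqrt{\tfrac{\pi}{2\theta_{N}}}\,e^{-\sqrt{\theta_{N}}\left\vert x\right\vert }+o_{N}\left(  1\right)  e^{-\sqrt{\lambda\left(  s\right)  }\left\vert x\right\vert }$, and $\sqrt{\tfrac{\pi}{2\theta_{N}}}\to\mathcal{C}_{1}$ together with $e^{-\sqrt{\theta_{N}}\left\vert x\right\vert }=\bigl(1+o_{N}\left(  1\right)  \bigr)e^{-\sqrt{\lambda\left(  s\right)  }\left\vert x\right\vert }$ follow from $\left\vert \theta_{N}-\lambda\left(  s\right)  \right\vert =o_{N}\left(  1\right)$, see \fref{tw102}.

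The heart of the proof is $m_{N}^{\ast}$. Near $\xi=-1/\kappa$ I set $t=\kappa\xi+1$ and write $n_{N}\left(  \xi\right)  +\theta_{N}=P\left(  t\right)  +\tfrac{2}{s\left(  s-1\right)  \kappa^{2}}\left\vert t\right\vert ^{s}$ with $P$ affine and $P\left(  0\right)  =\tfrac{2}{s\kappa^{2}}+\theta_{N}\sim\tfrac{2}{s\kappa^{2}}$ (using $\left\vert \theta_{N}\right\vert \leq C$, cf. \fref{tw87}). Expanding $\left(  n_{N}+\theta_{N}\right)  ^{-1}$ in powers of $\left\vert t\right\vert ^{s}$ and of $t$ about $t=0$, the leading non-smooth term is $c_{N}\left(  \xi\right)  \left\vert \kappa\xi+1\right\vert ^{s}$ with $c_{N}$ smooth and $c_{N}\left(  -1/\kappa\right)  =-\tfrac{2}{s\left(  s-1\right)  \kappa^{2}}P\left(  0\right)  ^{-2}\sim-\tfrac{s\kappa^{2}}{2\left(  s-1\right)  }$; the remaining terms ($\left\vert t\right\vert ^{2s}$, $t\left\vert t\right\vert ^{s}$, the purely smooth part, and the cut-off) produce inverse Fourier transforms that decay in $\left\vert x\right\vert$ by at least one more power, or carry an extra power of $\kappa$. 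For the leading term the substitution $t=\kappa\xi+1$ turns the integral into $\tfrac{e^{-ix/\kappa}}{\kappa\sqrt{2\pi}}\int c_{N}\!\bigl(\tfrac{t-1}{\kappa}\bigr)\left\vert t\right\vert ^{s}\psi_{\ast}\,e^{ixt/\kappa}\,dt$, to which I apply the classical endpoint asymptotics $\int_{0}^{\infty}t^{s}\phi\left(  t\right)  e^{i\omega t}\,dt=\phi\left(  0\right)  \Gamma\left(  s+1\right)  \left(  -i\omega\right)  ^{-s-1}+o\bigl(\omega^{-s-1}\bigr)$ and its mirror for $t<0$ — equivalently the tempered-distribution Fourier transform of $\left\vert t\right\vert ^{s}$, which supplies $\Gamma\left(  s\right)$ and the oscillatory coefficients $i^{s+1},\left(  -i\right)  ^{s+1}$. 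Setting $\omega=x/\kappa$ and tracking the Jacobian $\kappa^{-1}dt$, the overall phase $e^{-ix/\kappa}$, the value $c_{N}\left(  -1/\kappa\right)$, and $\left\vert t\right\vert ^{s}=\kappa^{s}\left\vert \xi+1/\kappa\right\vert ^{s}$, one obtains $m_{N}^{\ast}\left(  x\right)  =\mathcal{C}_{2}\,\tfrac{N^{s\left(  2+s\right)  /\left(  2-s\right)  }}{\left\vert x\right\vert ^{s+1}}+\bigl(o_{N}\left(  1\right)  +o_{\left\vert x\right\vert }\left(  1\right)  \bigr)\tfrac{N^{s\left(  2+s\right)  /\left(  2-s\right)  }}{\left\vert x\right\vert ^{s+1}}$, since $N^{s\left(  2+s\right)  /\left(  2-s\right)  }=\kappa^{s+2}$, with $\mathcal{C}_{2}$ as in \fref{tw168}. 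Finally, on $\mathrm{supp}\,\psi_{\flat}$ the symbol $\psi_{\flat}\left(  n_{N}+\theta_{N}\right)  ^{-1}$ is $C^{\infty}$, bounded by $C\kappa^{2-s}\left\langle \xi\right\rangle ^{-s}$ along with its derivatives, whence $\bigl\Vert\partial_{\xi}^{k}\bigl(\psi_{\flat}(n_{N}+\theta_{N})^{-1}\bigr)\bigr\Vert_{L^{1}}\lesssim\kappa^{1+k}$ and integration by parts gives $\left\vert m_{N}^{\flat}\left(  x\right)  \right\vert \lesssim\kappa^{1+k}\left\vert x\right\vert ^{-k}$ for every $k$, absorbed into the error terms of \fref{as45}. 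Adding the three pieces yields \fref{as45}.

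The main obstacle is the analysis of $m_{N}^{\ast}$: determining precisely which non-smooth monomials of the local expansion of $\left(  n_{N}+\theta_{N}\right)  ^{-1}$ contribute at order $\left\vert x\right\vert ^{-s-1}$, computing the exact constant $\mathcal{C}_{2}$ — this needs the exact tempered-distribution Fourier transform of the homogeneous part of the symbol together with careful bookkeeping of the smooth coefficient at $\xi=-1/\kappa$ and of the $\kappa$-powers created by the rescaling $t=\kappa\xi+1$ — and checking that each discarded contribution (the cut-offs, the sub-leading monomials $\left\vert t\right\vert ^{2s}$ and $t\left\vert t\right\vert ^{s}$, and $m_{N}^{\flat}$) is of strictly lower order \emph{uniformly} in the joint limit $\left\vert x\right\vert \to\infty$, $N\to0$, so that it fits inside the prescribed error structure $o_{N}\left(  1\right)  e^{-\sqrt{\lambda\left(  s\right)  }\left\vert x\right\vert }+\bigl(o_{N}\left(  1\right)  +o_{\left\vert x\right\vert }\left(  1\right)  \bigr)N^{s\left(  2+s\right)  /\left(  2-s\right)  }\left\vert x\right\vert ^{-s-1}$ of \fref{as45}. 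Balancing the two competing error mechanisms — $\kappa$-gains from the cut-offs against $\left\vert x\right\vert $-gains from integration by parts — is the delicate bookkeeping step.
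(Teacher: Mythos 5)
Your proposal arrives at the same answer but takes a genuinely different route. The paper makes a \emph{hard} split of the Fourier integral at the symbol's unique non-smooth point $\xi=-1/\kappa$, into $I_1$ (over $\xi>-1/\kappa$) and $I_2$ (over $\xi<-1/\kappa$), analytically continues each denominator into the appropriate complex half-strip (via $f_1^{\pm},f_2^{\pm}$, Lemma \ref{L9}), and shifts the contour to the vertical line $\operatorname{Re}\xi=\mp 1/\kappa$: $I_1$ picks up the residue at the unique root $\xi^{(\kappa)}$, which produces $\mathcal{C}_1e^{-\sqrt{\lambda(s)}|x|}$, while each boundary integral is handled by the substitution $y=\mp i(\kappa\xi\pm1)$, one integration by parts, and the Laplace integral $\int_0^{\infty}e^{-xy/\kappa}y^{s-1}dy=\Gamma(s)(\kappa/x)^{s}$, which produces the algebraic term. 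You instead keep the integral whole, introduce a smooth partition of unity $\psi_0+\psi_{*}+\psi_{\flat}=1$ adapted to the two distinguished frequencies $\xi=0$ and $\xi=-1/\kappa$, recover the exponential from the quadratic approximation $n_N\approx\xi^2$ and residue calculus on $\mathrm{supp}\,\psi_0$, and recover the algebraic term from the distributional Fourier transform of $|t|^{s}$ (endpoint oscillatory-integral asymptotics) on $\mathrm{supp}\,\psi_*$. Both are legitimate. What the paper's contour method buys: no cutoffs, so every piece is an exact closed-form integral and there are no transition-region contributions to absorb; what it costs: Lemma \ref{L9} (the root-counting in the complex half-strips). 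What your microlocal method buys: it is more systematic and would adapt to more general symbols with the same singularity structure; what it costs: the transition pieces --- the tail $(1-\psi_0)(\xi^2+\theta_N)^{-1}$, the cubic error $O(\kappa|\xi|^3)$ from the quadratic approximation on $\mathrm{supp}\,\psi_0$, the smooth part of the symbol on $\mathrm{supp}\,\psi_*$, and $m_N^{\flat}$ --- must each be pushed into the error budget $o_N(1)e^{-\sqrt{\lambda(s)}|x|}+\bigl(o_N(1)+o_{|x|}(1)\bigr)\kappa^{2+s}|x|^{-s-1}$, and you correctly flag that this trade of $\kappa$-gains against $|x|$-gains is where the real work lies and is left undone here. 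One further remark worth making: carrying out your $\psi_*$-computation to the end yields a coefficient proportional to $-s^{2}\bigl(i^{s+1}+(-i)^{s+1}\bigr)\Gamma(s)$, which agrees (up to a sign apparently dropped in the paper's integration by parts) with the paper's own sum $\tfrac{s(s-1)}{2}(Z_1+\tilde Z_1)$ but \emph{not} with the displayed constant $\mathcal{C}_2$ in \fref{tw168}; your route would therefore have flagged what looks like a typographical slip there.
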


Before proving Lemma \ref{L10}, we prepare a result that is involved in its
proof. Let us consider the functions%
\begin{equation}
f_{1}^{\pm}\left(  y\right)  =\left(  y+1\right)  ^{s}-sy-1+\frac{s\left(
s-1\right)  }{2}N^{\frac{2s}{2-s}}\theta_{N} \label{tw146}%
\end{equation}
on $G_{1}^{\pm}=\left\{  y\in\mathbb{C}:\operatorname{Re}y>-1\text{ and }%
\pm\operatorname{Im}y>0\right\}  ,$ with the brunch of $\left(  y+1\right)
^{s}$ selected in such way that $\left(  x+1\right)  ^{s}=\left(  x+1\right)
^{s}$, for $x>-1.$ Also, let
\begin{equation}
f_{2}^{\pm}\left(  y\right)  =\left(  y-1\right)  ^{s}+sy-1+N^{\frac{2s}{2-s}%
}\theta_{N} \label{tw147}%
\end{equation}
on $G_{2}^{\pm}=\left\{  y\in\mathbb{C}:\operatorname{Re}y>1\text{ and }%
\pm\operatorname{Im}y>0\right\}  ,$ where $\left(  x-1\right)  ^{s}=\left(
x-1\right)  ^{s},$ for $x>1.$ We have the following.

\begin{lemma}
\label{L9}i) There is $N_{0}>0$, such that for any $0\leq N\leq N_{0},$ the
function $f_{1}^{\pm}\left(  y\right)  $ has only one root $y^{\pm}=y^{\pm
}\left(  N\right)  $ in the region $G_{1}^{\pm}.$ This root satisfies the
estimate%
\begin{equation}
\left\vert y^{\pm}\left(  N\right)  \right\vert =O\left(  N^{\frac{s}{2-s}%
}\right)  , \label{tw133}%
\end{equation}
as $N$ tends to $0.$

ii) On the other hand, the function $f_{2}^{\pm}\left(  y\right)  $ has no
roots in the region $G_{2}^{\pm}.$
\end{lemma}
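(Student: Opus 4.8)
I would treat the two parts by entirely different arguments: (ii) reduces to an elementary sign computation, while (i) requires a Rouch\'e-type localization. For (ii), write $f_2^{\pm}(y)=(y-1)^s+sy-1+N^{\frac{2s}{2-s}}\theta_N$. If $y\in G_2^{+}$, then $\operatorname{Re}(y-1)=\operatorname{Re}y-1>0$ and $\operatorname{Im}(y-1)=\operatorname{Im}y>0$, so $\arg(y-1)\in(0,\pi/2)$; since $1<s<2$ this forces $s\arg(y-1)\in(0,\pi)$, hence $\operatorname{Im}\bigl((y-1)^s\bigr)=|y-1|^s\sin\bigl(s\arg(y-1)\bigr)>0$. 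As moreover $s\operatorname{Im}y>0$ and $N^{\frac{2s}{2-s}}\theta_N\in\mathbb R$, we get $\operatorname{Im}f_2^{+}(y)>0$, so $f_2^{+}$ has no zero in $G_2^{+}$; the case $G_2^{-}$ is identical with the signs reversed (or follows from the conjugation identity $f_2^{-}(y)=\overline{f_2^{+}(\bar y)}$). This uses nothing about $N$ beyond $\theta_N\in\mathbb R$.

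For (i), which is the substantial part, recall from \fref{tw102} that $\theta_N\to\lambda(s)>0$, so that $\epsilon=\epsilon(N):=\tfrac{s(s-1)}2N^{\frac{2s}{2-s}}\theta_N>0$ for $N$ small, and set $r_0=r_0(N):=\sqrt{2\epsilon/(s(s-1))}=\sqrt{\theta_N}\,N^{\frac s{2-s}}$ and $g(y):=(y+1)^s-sy-1$, so that $f_1^{\pm}=g+\epsilon$. The crucial structural fact is that $g$ is analytic on $H:=\{\operatorname{Re}y>-1\}$ with $g(0)=g'(0)=0$ and $g''(y)=s(s-1)(y+1)^{s-2}$, so Taylor's theorem with integral remainder yields, on $H$ and continuously up to $\overline H$,
\[
g(y)=s(s-1)\,y^2\,\Phi(y),\qquad \Phi(y):=\int_0^1(1-t)(ty+1)^{s-2}\,dt .
\]
Furthermore $\operatorname{Re}\Phi(y)>0$ on $\overline{G_1^{\pm}}$: for such $y$ and $t\in[0,1)$ one has $\operatorname{Re}(ty+1)\ge1-t>0$, hence $(2-s)|\arg(ty+1)|<(2-s)\tfrac\pi2<\tfrac\pi2$ and thus $\operatorname{Re}\bigl((ty+1)^{s-2}\bigr)>0$. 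In particular $g$ vanishes on $\overline{G_1^{\pm}}$ only at $y=0$, with a double zero ($\Phi(0)=\tfrac12$).

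To localize the zeros of $f_1^{\pm}$, first fix $M_0=M_0(s)$ with $(|y|-1)^s-s|y|-1\ge1$ for $|y|\ge M_0$; then $|g(y)|\ge|y+1|^s-s|y|-1\ge1>\epsilon$ for $|y|\ge M_0$ and $N$ small, so $f_1^{\pm}\ne0$ there. Set $c_0:=\min_{\overline{G_1^{\pm}}\cap\{|y|\le M_0\}}\operatorname{Re}\Phi>0$ (a positive continuous function on a compact set); then $|g(y)|\ge s(s-1)c_0|y|^2$ on that set, and choosing a fixed constant $A$ with $A^2>\lambda(s)\max(1,c_0^{-1})$ and $R_N:=AN^{\frac s{2-s}}$ gives $|g(y)|\ge s(s-1)c_0R_N^2>\epsilon$ for $R_N\le|y|\le M_0$ and $N$ small, so $f_1^{\pm}\ne0$ there as well. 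It remains to count zeros in $B(0,R_N)\subset H$: comparing $f_1^{\pm}=s(s-1)y^2\Phi(y)+\epsilon$ with $P(y):=\tfrac{s(s-1)}2y^2+\epsilon$, whose only zeros are $\pm ir_0\in B(0,R_N)$, on $|y|=R_N$ one has $|f_1^{\pm}-P|=s(s-1)|y|^2|\Phi(y)-\tfrac12|=O(R_N^3)=o(N^{\frac{2s}{2-s}})$ while $\min_{|y|=R_N}|P|=\tfrac{s(s-1)}2|R_N^2-r_0^2|\asymp N^{\frac{2s}{2-s}}$, so $|f_1^{\pm}-P|<|P|$ on that circle and Rouch\'e yields exactly two zeros of $f_1^{\pm}$ in $B(0,R_N)$. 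Each such zero $y_*$ satisfies $y_*^2=-\epsilon/\bigl(s(s-1)\Phi(y_*)\bigr)=-r_0^2\bigl(1+O(R_N)\bigr)$, hence lies within $O(r_0R_N)=o(r_0)$ of $+ir_0$ or of $-ir_0$; since those points are at distance $\asymp r_0$ and the two zeros are distinct (a repeated zero would force $g'(y_*)=s\bigl((y_*+1)^{s-1}-1\bigr)=0$, i.e.\ $y_*=0$), exactly one lies in $G_1^{+}$ and one in $G_1^{-}$. This produces the unique root $y^{+}(N)\in G_1^{+}$ with $|y^{+}(N)|=r_0(1+o(1))=O(N^{\frac s{2-s}})$, and $y^{-}(N)=\overline{y^{+}(N)}$ by conjugation, proving \fref{tw133}.

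The main obstacle is precisely this zero count: at $N=0$ the symbol $g$ has a \emph{double} zero sitting exactly on the boundary of $G_1^{\pm}$, and the perturbation $\epsilon$ is only polynomially small in $N$, so no soft degree-theoretic argument applies. The factorization $g=s(s-1)y^2\Phi$ with $\operatorname{Re}\Phi>0$ is what resolves this: it exhibits the double zero, furnishes the quadratic lower bound $|g(y)|\gtrsim|y|^2$ on bounded regions which — combined with the crude bound $|g(y)|\gtrsim|y|^s$ near infinity — confines all zeros of $f_1^{\pm}$ to the small disk $B(0,R_N)$, and reduces the count there to a one-line Rouch\'e comparison with $\tfrac{s(s-1)}2y^2+\epsilon$.
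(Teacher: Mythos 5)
Your argument for part (ii) is essentially the paper's: both show the imaginary part of $f_2^\pm$ has a definite sign off the real axis, where the real part is positive. For part (i), however, you take a genuinely different route. The paper translates to the origin, writes the unknown in polar coordinates $y=|y|e^{i\phi}$, solves the vanishing of the imaginary part for $|y|=r(\phi)$, and then shows by an explicit monotonicity computation that the real part restricted to this curve changes sign exactly once in $(0,\pi/2]$. You instead factor $g(y)=(y+1)^s-sy-1=s(s-1)y^2\Phi(y)$ via the integral form of Taylor's theorem, observe $\operatorname{Re}\Phi>0$ on $\overline{G_1^\pm}$ (and $\Phi(0)=\tfrac12$), use the resulting lower bounds $|g|\gtrsim|y|^2$ on bounded sets and $|g|\gtrsim|y|^s$ at infinity to confine all zeros to the small disc $B(0,R_N)$ with $R_N\asymp N^{s/(2-s)}$, and finish with a Rouch\'e comparison against the model quadratic $\tfrac{s(s-1)}2y^2+\epsilon$. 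Your approach is more conceptual and makes the mechanism (a double zero of the symbol on the boundary, split into two simple zeros by the $O(N^{2s/(2-s)})$ perturbation) completely transparent, and it yields the asymptotic $|y^\pm(N)|\sim\sqrt{\theta_N}\,N^{s/(2-s)}$ directly, whereas the paper's argument is more elementary but hides the zero structure inside a one-variable monotonicity computation along an implicitly defined curve. One step of yours is stated imprecisely: the fact that the two Rouch\'e zeros are distinct and each lies within $o(r_0)$ of $+ir_0$ or $-ir_0$ does not by itself force one into each half-plane (both could a priori cluster near $+ir_0$); what actually does the job is the conjugation symmetry $f_1(\bar y)=\overline{f_1(y)}$, which you do invoke at the very end — moving that remark to the front of the zero-count closes the small logical gap. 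With that rearrangement the proof is correct.
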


\begin{proof}
Let us consider the case of $f_{1}^{+}.$ We translate $y\rightarrow y-1$ and
study the zeros of the function $\tilde{f}_{1}^{+}\left(  y\right)
=y^{s}-sy+s-1+\kappa^{2}\theta^{\left(  \kappa\right)  }$ in $\tilde{G}%
_{1}^{+}=\{y\in\mathbb{C}:\operatorname{Re}y\geq0$ and $\operatorname{Im}%
y\geq0\}$. We write $y=\left\vert y\right\vert e^{i\phi},$ for $0\leq\phi
\leq\frac{\pi}{2}.$ Then, we need to solve the following equation
\[
\left.  \tilde{f}_{1}^{+}\left(  y\right)  =\left\vert y\right\vert
^{s}e^{is\phi}-s\left\vert y\right\vert e^{i\phi}+s-1+N^{\frac{2s}{2-s}}%
\theta_{N}=f_{11}^{+}\left(  \left\vert y\right\vert ,\phi\right)
+if_{12}^{+}\left(  \left\vert y\right\vert ,\phi\right)  =0,\right.
\]
with%
\[
f_{11}^{+}\left(  \left\vert y\right\vert ,\phi\right)  =\left\vert
y\right\vert ^{s}\cos\left(  s\phi\right)  -s\left\vert y\right\vert \cos
\phi+s-1+N^{\frac{2s}{2-s}}\theta_{N}%
\]
and%
\[
f_{12}^{+}\left(  \left\vert y\right\vert ,\phi\right)  =\left\vert
y\right\vert ^{s}\sin\left(  s\phi\right)  -s\left\vert y\right\vert \sin
\phi.
\]
Equivalently, we get the equations
\begin{equation}
f_{11}^{+}\left(  \left\vert y\right\vert ,\phi\right)  =0 \label{tw128}%
\end{equation}
and
\begin{equation}
f_{12}^{+}\left(  \left\vert y\right\vert ,\phi\right)  =0. \label{tw129}%
\end{equation}
From (\ref{tw129}) we see that for $0<\phi\leq\frac{\pi}{2}$%
\begin{equation}
\left\vert y\right\vert ^{s-1}=s\frac{\sin\phi}{\sin\left(  s\phi\right)  }.
\label{tw132}%
\end{equation}
The right-hand side is increasing on $0\leq\phi\leq\frac{\pi}{2}$ and it is
equal to $1$ for $\phi=0.$ Therefore, there is no roots for $f_{1}^{+}\left(
y\right)  $ if $\left\vert y\right\vert <1.$ Let $r\left(  \phi\right)
=\left(  s\frac{\sin\phi}{\sin\left(  s\phi\right)  }\right)  ^{\frac{1}{s-1}%
}.$ Then, we need to solve $f_{11}^{+}\left(  r\left(  \phi\right)
,\phi\right)  =0$ on $0\leq\phi\leq\frac{\pi}{2}.$ Note that for $0<\phi
\leq\frac{\pi}{2}$%
\begin{equation}
\left.
\begin{array}
[c]{c}%
\frac{d}{d\phi}f_{11}^{+}\left(  r\left(  \phi\right)  ,\phi\right)  =s\left(
r^{s-1}\left(  \phi\right)  \cos\left(  s\phi\right)  -\cos\phi\right)
\frac{dr\left(  \phi\right)  }{d\phi}\\
-sr\left(  r^{s-1}\left(  \phi\right)  \sin\left(  s\phi\right)  -\sin
\phi\right) \\
=\frac{s}{\sin\left(  s\phi\right)  }\left(  s\sin\phi\cos\left(
s\phi\right)  -\sin\left(  s\phi\right)  \cos\phi\right)  \frac{dr\left(
\phi\right)  }{d\phi}\\
-s\left(  s-1\right)  r\left(  \phi\right)  \sin\phi<0.
\end{array}
\right.  \label{tw130}%
\end{equation}
Moreover, $f_{11}^{+}\left(  r\left(  0\right)  ,0\right)  =N^{\frac{2s}{2-s}%
}\theta_{N}.$ Then, using (\ref{tw102}), we see that $f_{11}^{+}\left(
r\left(  0\right)  ,0\right)  >0$ for $N>0$ small enough. Since $r\left(
\phi\right)  $ is increasing on $0\leq\phi\leq\frac{\pi}{2},$ from
(\ref{tw130}) we get%
\[
\left.
\begin{array}
[c]{c}%
\frac{d}{d\phi}f_{11}^{+}\left(  r\left(  \phi\right)  ,\phi\right) \\
=\frac{s}{\sin\left(  s\phi\right)  }\left(  s\sin\phi\cos\left(
s\phi\right)  -\sin\left(  s\phi\right)  \cos\phi\right)  \frac{dr\left(
\phi\right)  }{d\phi}\\
-s\left(  s-1\right)  \left(  r\left(  \phi\right)  -1\right)  \sin
\phi-s\left(  s-1\right)  \sin\phi<-s\left(  s-1\right)  \sin\phi.
\end{array}
\right.
\]
Integrating the last inequality we see that%
\[
f_{11}^{+}\left(  r\left(  \phi\right)  ,\phi\right)  <N^{\frac{2s}{2-s}%
}\theta_{N}-s\left(  s-1\right)  \left(  1-\cos\phi\right)  .
\]
Thus, there is $0<\phi_{0}<\frac{\pi}{2},$ such that $f_{11}^{+}\left(
r\left(  \phi_{0}\right)  ,\phi_{0}\right)  <0,$ for all $N>0$ small enough.
Hence, we conclude that there is $N_{0}>0$, such that for any $0\leq N\leq
N_{0},$ there is $0<\phi\left(  N\right)  <\frac{\pi}{2}$ with the property
\[
f_{11}^{+}\left(  r\left(  \phi\left(  N\right)  \right)  ,\phi\left(
N\right)  \right)  =0.
\]
Therefore we conclude for $N>0$ sufficiently small, the function $f_{1}%
^{+}\left(  y\right)  $ has only one root $y^{+}=y^{+}\left(  N\right)  .$
Since $\left(  y+1\right)  ^{s}-sy-1=0$ only for $y=0,$ we show that $y\left(
N\right)  \rightarrow0,$ as $N$ tends to $0.$ Then, using that $\left(
y+1\right)  ^{s}-sy-1=Cy^{2}+o\left(  y^{2}\right)  ,$ as $y\rightarrow0$,
from the equation $f_{1}^{+}\left(  y\left(  N\right)  \right)  =0$ we get
(\ref{tw133}). To prove the same result for $f_{1}^{-},$ we again translate
$y\rightarrow y-1$ and study the zeros of the function $\tilde{f}_{1}%
^{-}\left(  y\right)  =y^{s}-sy+s-1+\kappa^{2}\theta^{\left(  \kappa\right)
}$ in $\tilde{G}_{1}^{-}=\{y\in\mathbb{C}:\operatorname{Re}y\geq0$ and
$\operatorname{Im}y\leq0\}$. We represent $y=\left\vert y\right\vert
e^{-i\phi},$ for $0\leq\phi\leq\frac{\pi}{2}.$ Then,%
\[
\left.  \tilde{f}_{1}^{-}\left(  y\right)  =f_{11}^{-}\left(  \left\vert
y\right\vert ,\phi\right)  +if_{12}^{-}\left(  \left\vert y\right\vert
,\phi\right)  =0,\right.
\]
with%
\[
f_{11}^{-}\left(  \left\vert y\right\vert ,\phi\right)  =\left\vert
y\right\vert ^{s}\cos\left(  s\phi\right)  -s\left\vert y\right\vert \cos
\phi+s-1+N^{\frac{2s}{2-s}}\theta_{N}%
\]
and%
\[
f_{12}^{-}\left(  \left\vert y\right\vert ,\phi\right)  =-\left\vert
y\right\vert ^{s}\sin\left(  s\phi\right)  +s\left\vert y\right\vert \sin
\phi.
\]
Arguing similarly to the case of $f_{1}^{+}$ we prove that $f_{1}^{-}$ has
only one root $y^{-}=y^{-}\left(  N\right)  $ which satisfies (\ref{tw133}).
This proves the first part of Lemma \ref{L9}.

To prove the second part, we first translate $y\rightarrow y+1$ and study the
existence of zeros for
\begin{equation}
\tilde{f}_{2}^{\pm}\left(  y\right)  =y^{s}+sy+s-1+N^{\frac{2s}{2-s}}%
\theta_{N} \label{tw131}%
\end{equation}
on $\tilde{G}_{2}^{\pm}=\left\{  y\in\mathbb{C}:\operatorname{Re}y\geq0\text{
and }\pm\operatorname{Im}y\geq0\right\}  .$ We introduce the decomposition
$y=\left\vert y\right\vert e^{\pm i\phi},$ for $0\leq\phi\leq\frac{\pi}{2}$
into (\ref{tw131}) to get%
\[
\tilde{f}_{2}^{\pm}\left(  y\right)  =f_{21}^{\pm}\left(  \left\vert
y\right\vert ,\phi\right)  +if_{22}^{\pm}\left(  \left\vert y\right\vert
,\phi\right)  ,
\]
with%
\[
f_{21}^{\pm}\left(  \left\vert y\right\vert ,\phi\right)  =\left\vert
y\right\vert ^{s}\cos\left(  s\phi\right)  +s\left\vert y\right\vert \cos
\phi+s-1+N^{\frac{2s}{2-s}}\theta_{N}%
\]
and%
\[
f_{22}^{\pm}\left(  \left\vert y\right\vert ,\phi\right)  =\pm\left(
\left\vert y\right\vert ^{s}\sin\left(  s\phi\right)  +s\left\vert
y\right\vert \sin\phi\right)  .
\]
Since
\[
\pm f_{22}^{\pm}\left(  \left\vert y\right\vert ,\phi\right)  >0
\]
for all $\left\vert y\right\vert >0$ and all $0<\phi\leq\frac{\pi}{2},$ and%
\[
f_{21}^{\pm}\left(  \left\vert y\right\vert ,0\right)  =\left\vert
y\right\vert ^{s}+s\left\vert y\right\vert +s-1+N^{\frac{2s}{2-s}}\theta
_{N}>0,
\]
we conclude that $\tilde{f}_{2}^{\pm}\left(  y\right)  $ does not have roots
on $\tilde{G}_{2}^{\pm}$.
\end{proof}

\begin{proof}
[Proof of Lemma \ref{L10}]We have \
\[
m^{\left(  \kappa\right)  }\left(  x\right)  :=m_{N}\left(  x\right)
=\frac{s\left(  s-1\right)  }{2}\mathcal{F}^{-1}\left(  \frac{1}{\kappa
^{-2}\left(  \left\vert \kappa\xi+1\right\vert ^{s}-s\kappa\xi-1\right)
+\theta^{\left(  \kappa\right)  }}\right)  \left(  x\right)  ,
\]
with $\kappa=N^{\frac{s}{2-s}}$ and $\theta^{\left(  \kappa\right)  }%
=\frac{s\left(  s-1\right)  }{2}\theta_{N}.$ Let us study the function
\[
I:=\mathcal{F}^{-1}\left(  \frac{1}{\kappa^{-2}\left(  \left\vert \kappa
\xi+1\right\vert ^{s}-s\kappa\xi-1\right)  +\theta^{\left(  \kappa\right)  }%
}\right)  .
\]
We have
\begin{equation}
\left.
\begin{array}
[c]{c}%
I=\dfrac{1}{\sqrt{2\pi}}%
{\displaystyle\int\limits_{-\infty}^{\infty}}
e^{ix\xi}\dfrac{d\xi}{\kappa^{-2}\left(  \left\vert \kappa\xi+1\right\vert
^{s}-s\kappa\xi-1\right)  +\theta^{\left(  \kappa\right)  }}\\
=\dfrac{1}{\sqrt{2\pi}}%
{\displaystyle\int\limits_{-\frac{1}{\kappa}}^{\infty}}
e^{ix\xi}\dfrac{d\xi}{\kappa^{-2}\left(  \left\vert \kappa\xi+1\right\vert
^{s}-s\kappa\xi-1\right)  +\theta^{\left(  \kappa\right)  }}\\
+\dfrac{1}{\sqrt{2\pi}}%
{\displaystyle\int\limits_{-\infty}^{-\frac{1}{\kappa}}}
e^{ix\xi}\dfrac{d\xi}{\kappa^{-2}\left(  \left\vert \kappa\xi+1\right\vert
^{s}-s\kappa\xi-1\right)  +\theta^{\left(  \kappa\right)  }}\\
=I_{1}+I_{2},
\end{array}
\right.  \label{tw145}%
\end{equation}
with
\[
I_{1}=\dfrac{1}{\sqrt{2\pi}}%
{\displaystyle\int\limits_{-\frac{1}{\kappa}}^{\infty}}
e^{ix\xi}\dfrac{d\xi}{\kappa^{-2}\left(  \left(  \kappa\xi+1\right)
^{s}-s\kappa\xi-1\right)  +\theta^{\left(  \kappa\right)  }}%
\]
and%
\[
I_{2}=\dfrac{1}{\sqrt{2\pi}}%
{\displaystyle\int\limits_{\frac{1}{\kappa}}^{\infty}}
e^{-ix\xi}\dfrac{d\xi}{\kappa^{-2}\left(  \left(  \kappa\xi-1\right)
^{s}+s\kappa\xi-1\right)  +\theta^{\left(  \kappa\right)  }}.
\]

Suppose that $x>0.$ First, we consider $I_{1}.$ We extend the denominator
$F\left(  \xi\right)  =\kappa^{-2}\left(  \left(  \kappa\xi+1\right)
^{s}-s\kappa\xi-1\right)  +\theta^{\left(  \kappa\right)  }$ analytically by
the function $\kappa^{-2}f_{1}^{+}\left(  \kappa\xi\right)  ,$ defined by
(\ref{tw146}). By Lemma \ref{L9} $F\left(  \xi\right)  =\kappa^{-2}\left(
\left(  \kappa\xi+1\right)  ^{s}-s\kappa\xi-1\right)  +\theta^{\left(
\kappa\right)  }$ has only one root $\xi=\xi^{\left(  \kappa\right)  }$ in the
region $\left\{  \xi\in\mathbb{C}:\operatorname{Re}\xi>-\frac{1}{\kappa}\text{
and }\operatorname{Im}\xi>0\right\}  .$ Then, it follows from Jordan's lemma
that%
\begin{equation}
\left.  I_{1}=\dfrac{2\pi i}{\sqrt{2\pi}}e^{ix\xi^{\left(  \kappa\right)  }%
}\dfrac{\kappa}{s\left(  \kappa\xi^{\left(  \kappa\right)  }+1\right)
^{s-1}-s}+I_{11},\right.  \label{tw141}%
\end{equation}
where
\[
I_{11}=\dfrac{1}{\sqrt{2\pi}}%
{\displaystyle\int\limits_{-\frac{1}{\kappa}}^{-\frac{1}{\kappa}+i\infty}}
e^{ix\xi}\dfrac{d\xi}{\kappa^{-2}\left(  \left(  \kappa\xi+1\right)
^{s}-s\kappa\xi-1\right)  +\theta^{\left(  \kappa\right)  }}.
\]
By (\ref{tw134})%
\begin{equation}
F\left(  \xi^{\left(  \kappa\right)  }\right)  =\frac{s\left(  s-1\right)
}{2}\left(  \xi^{\left(  \kappa\right)  }\right)  ^{2}+\theta^{\left(
\kappa\right)  }+O\left(  \kappa\left(  \xi^{\left(  \kappa\right)  }\right)
^{3}\right)  =0. \label{tw135}%
\end{equation}
Using (\ref{tw133}) we deduce that $\left\vert \xi^{\left(  \kappa\right)
}\right\vert \leq C.$ Then, from (\ref{tw135}) it follows%
\[
\frac{s\left(  s-1\right)  }{2}\left(  \xi^{\left(  \kappa\right)  }\right)
^{2}+\theta^{\left(  \kappa\right)  }=O\left(  \kappa\right)  .
\]
As $\operatorname{Im}\xi^{\left(  \kappa\right)  }\geq0,$ using $\theta
^{\left(  \kappa\right)  }=\frac{s\left(  s-1\right)  }{2}\theta_{N}$ and
(\ref{tw102}) we get%
\[
\xi^{\left(  \kappa\right)  }-\sqrt{\lambda\left(  s\right)  }i=o\left(
1\right)  ,
\]
as $\kappa$ $\rightarrow0.$ Therefore, taking into account the relation
\[
s\left(  \kappa\xi^{\left(  \kappa\right)  }+1\right)  ^{s-1}-s=s\left(
s-1\right)  \sqrt{\lambda\left(  s\right)  }i\kappa+o\left(  \kappa\right)
\]
we get
\begin{equation}
\left.
\begin{array}
[c]{c}%
\dfrac{2\pi i}{\sqrt{2\pi}}e^{ix\xi^{\left(  \kappa\right)  }}\dfrac{\kappa
}{s\left(  \kappa\xi^{\left(  \kappa\right)  }+1\right)  ^{s-1}-s}\\
=\sqrt{\frac{\pi}{2}}\dfrac{2}{s\left(  s-1\right)  \sqrt{\lambda\left(
s\right)  }}e^{-\sqrt{\lambda\left(  s\right)  }\left\vert x\right\vert
}+e^{-\sqrt{\lambda\left(  s\right)  }\left\vert x\right\vert }o\left(
1\right)  ,
\end{array}
\right.  \label{tw140}%
\end{equation}
as $\kappa$ $\rightarrow0.$

Making the change $y=-i\left(  \kappa\xi+1\right)  $ in the integral in
$I_{11}$ we have%
\[
I_{11}=\dfrac{i}{\sqrt{2\pi}}e^{-i\frac{x}{\kappa}}%
{\displaystyle\int\limits_{0}^{\infty}}
e^{-\frac{x}{\kappa}y}\dfrac{\kappa dy}{\left(  iy\right)  ^{s}-isy+s-1+\kappa
^{2}\theta^{\left(  \kappa\right)  }}.
\]
Integrating by parts in $I_{1}$ we have%
\begin{equation}
I_{11}=\dfrac{i\kappa^{2}}{x\sqrt{2\pi}}\dfrac{e^{-i\frac{x}{\kappa}}%
}{s-1+\kappa^{2}\theta^{\left(  \kappa\right)  }}+Z, \label{tw136}%
\end{equation}
with%
\[
Z=\dfrac{i\kappa^{2}e^{-i\frac{x}{\kappa}}}{x\sqrt{2\pi}}%
{\displaystyle\int\limits_{0}^{\infty}}
e^{-\frac{x}{\kappa}y}\dfrac{\left(  i^{s}sy^{s-1}-is\right)  }{\left(
i^{s}y^{s}-isy+s-1+\kappa^{2}\theta^{\left(  \kappa\right)  }\right)  ^{2}%
}dy.
\]
We decompose now $Z$ as%
\begin{equation}
Z=Z_{1}+Z_{2}+Z_{3}, \label{tw139}%
\end{equation}
where
\[
Z_{1}=i^{s+1}\dfrac{s\kappa^{2}e^{-i\frac{x}{\kappa}}}{\left(  s-1\right)
^{2}x\sqrt{2\pi}}%
{\displaystyle\int\limits_{0}^{\infty}}
e^{-\frac{x}{\kappa}y}y^{s-1}dy,
\]%
\[
Z_{2}=\dfrac{s\kappa^{2}e^{-i\frac{x}{\kappa}}}{x\sqrt{2\pi}}%
{\displaystyle\int\limits_{0}^{\infty}}
e^{-\frac{x}{\kappa}y}\dfrac{1}{\left(  i^{s}y^{s}-isy+s-1+\kappa^{2}%
\theta^{\left(  \kappa\right)  }\right)  ^{2}}dy
\]
and%
\[
Z_{3}=\dfrac{si^{s+1}\kappa^{2}e^{-i\frac{x}{\kappa}}}{x\sqrt{2\pi}}%
{\displaystyle\int\limits_{0}^{\infty}}
e^{-\frac{x}{\kappa}y}y^{s-1}\left(  \dfrac{1}{\left(  i^{s}y^{s}%
-isy+s-1+\kappa^{2}\theta^{\left(  \kappa\right)  }\right)  ^{2}}-\frac
{1}{\left(  s-1\right)  ^{2}}\right)  dy.
\]
Making the change $z=\frac{x}{\kappa}y$ in $Z_{1}$ and $Z_{3}$ we have%
\begin{equation}
Z_{1}=\dfrac{si^{s+1}\kappa^{2+s}e^{-i\frac{x}{\kappa}}}{\left(  s-1\right)
^{2}x^{s+1}\sqrt{2\pi}}%
{\displaystyle\int\limits_{0}^{\infty}}
e^{-z}z^{s-1}dz=\dfrac{si^{s+1}e^{-i\frac{x}{\kappa}}}{\left(  s-1\right)
^{2}\sqrt{2\pi}}\Gamma\left(  s\right)  \left(  \frac{\kappa^{2+s}}{x^{s+1}%
}\right)  \label{tw137}%
\end{equation}
and%
\begin{equation}
Z_{3}=\frac{o\left(  \kappa^{2+s}\right)  }{x^{s+1}}+\kappa^{2+s}o\left(
\frac{1}{x^{s+1}}\right)  . \label{tw138}%
\end{equation}
Using (\ref{tw137}) and (\ref{tw138}) in (\ref{tw139}) we have%
\[
Z=\dfrac{si^{s+1}e^{-i\frac{x}{\kappa}}}{\left(  s-1\right)  ^{2}\sqrt{2\pi}%
}\Gamma\left(  s\right)  \left(  \frac{\kappa^{2+s}}{x^{s+1}}\right)
+Z_{2}+\frac{o\left(  \kappa^{2+s}\right)  }{x^{s+1}}+\kappa^{2+s}o\left(
\frac{1}{x^{s+1}}\right)  .
\]
Introducing the last equation into (\ref{tw136}) we get
\begin{equation}
\left.
\begin{array}
[c]{c}%
I_{11}=\dfrac{i\kappa^{2}}{x\sqrt{2\pi}}\dfrac{e^{-i\frac{x}{\kappa}}%
}{s-1+\kappa^{2}\theta^{\left(  \kappa\right)  }}+Z_{2}\\
+\dfrac{si^{s+1}e^{-i\frac{x}{\kappa}}}{\left(  s-1\right)  ^{2}\sqrt{2\pi}%
}\Gamma\left(  s\right)  \left(  \frac{\kappa^{2+s}}{x^{s+1}}\right)
+\frac{o\left(  \kappa^{2+s}\right)  }{x^{s+1}}+\kappa^{2+s}o\left(  \frac
{1}{x^{s+1}}\right)  .
\end{array}
\right.  \label{tw143}%
\end{equation}
Finally, using (\ref{tw140}) and (\ref{tw143}) in (\ref{tw141})%
\begin{equation}
\left.
\begin{array}
[c]{c}%
I_{1}=\sqrt{\frac{\pi}{2}}\dfrac{2}{s\left(  s-1\right)  \sqrt{\lambda\left(
s\right)  }}e^{-\sqrt{\lambda\left(  s\right)  }\left\vert x\right\vert }\\
+\dfrac{si^{s+1}e^{-i\frac{x}{\kappa}}}{\left(  s-1\right)  ^{2}\sqrt{2\pi}%
}\Gamma\left(  s\right)  \left(  \frac{\kappa^{2+s}}{x^{s+1}}\right)
+\dfrac{i\kappa^{2}}{x\sqrt{2\pi}}\dfrac{e^{-i\frac{x}{\kappa}}}%
{s-1+\kappa^{2}\theta^{\left(  \kappa\right)  }}\\
+Z_{2}+e^{-\sqrt{\lambda\left(  s\right)  }\left\vert x\right\vert }o_{\kappa
}\left(  1\right)  +\frac{\kappa^{2+s}}{x^{s+1}}o_{\kappa}\left(  1\right)
+\frac{\kappa^{2+s}}{x^{s+1}}o_{\left\vert x\right\vert }\left(  1\right)  ,
\end{array}
\right.  \label{tw142}%
\end{equation}
where $o_{\kappa}\left(  1\right)  \rightarrow0,$ as $\kappa\rightarrow0,$ and
$o_{\left\vert x\right\vert }\left(  1\right)  \rightarrow0,$ as $\left\vert
x\right\vert \rightarrow\infty.$

Let us now consider $I_{2}.$ We extend $F_{1}\left(  \xi\right)  =\kappa
^{-2}\left(  \left(  \kappa\xi-1\right)  ^{s}+s\kappa\xi-1\right)
+\theta^{\left(  \kappa\right)  }$ to the analytic function $\kappa^{-2}%
f_{2}^{-}\left(  \kappa\xi\right)  $, defined by (\ref{tw147}). By Lemma
\ref{L9} the denominator $F_{1}\left(  \xi\right)  =\kappa^{-2}\left(  \left(
\kappa\xi-1\right)  ^{s}+s\kappa\xi-1\right)  +\theta^{\left(  \kappa\right)
}$ has no roots in the region $\left\{  \xi\in\mathbb{C}:\operatorname{Re}%
\xi>\frac{1}{\kappa}\text{ and }\operatorname{Im}\xi<0\right\}  .$ Then, by
Jordan's lemma we have
\[
I_{2}=-\dfrac{1}{\sqrt{2\pi}}%
{\displaystyle\int\limits_{\frac{1}{\kappa}-i\infty}^{\frac{1}{\kappa}}}
e^{-ix\xi}\dfrac{d\xi}{\kappa^{-2}\left(  \left(  \kappa\xi-1\right)
^{s}+s\kappa\xi-1\right)  +\theta^{\left(  \kappa\right)  }}.
\]
Making the change $y=i\left(  \kappa\xi-1\right)  $ we get%
\[
I_{2}=-\dfrac{i\kappa e^{-i\frac{x}{\kappa}}}{\sqrt{2\pi}}%
{\displaystyle\int\limits_{0}^{\infty}}
e^{-\frac{x}{\kappa}y}\dfrac{dy}{\left(  -iy\right)  ^{s}-isy+s-1+\kappa
^{2}\theta^{\left(  \kappa\right)  }}.
\]
Integrating by parts we have%
\[
I_{2}=-\dfrac{i\kappa^{2}e^{-i\frac{x}{\kappa}}}{x\sqrt{2\pi}}\dfrac
{1}{s-1+\kappa^{2}\theta^{\left(  \kappa\right)  }}+\tilde{Y}_{1}+\tilde
{Y}_{2}+\tilde{Y}_{3},
\]
where%
\[
\tilde{Z}_{1}=\left(  -1\right)  ^{s+1}Z_{1}=\dfrac{s\left(  -i\right)
^{s+1}e^{-i\frac{x}{\kappa}}}{\left(  s-1\right)  ^{2}\sqrt{2\pi}}%
\Gamma\left(  s\right)  \left(  \frac{\kappa^{2+s}}{x^{s+1}}\right)
\]%
\[
\tilde{Z}_{2}=-\dfrac{s\kappa^{2}e^{-i\frac{x}{\kappa}}}{x\sqrt{2\pi}}%
{\displaystyle\int\limits_{0}^{\infty}}
e^{-\frac{x}{\kappa}y}\tfrac{1}{\left(  \left(  -iy\right)  ^{s}%
-isy+s-1+\kappa^{2}\theta^{\left(  \kappa\right)  }\right)  ^{2}}dy
\]
and%
\[
\tilde{Z}_{3}=\dfrac{\left(  -i\right)  ^{s+1}s\kappa^{2}e^{-i\frac{x}{\kappa
}}}{x\sqrt{2\pi}}%
{\displaystyle\int\limits_{0}^{\infty}}
e^{-\frac{x}{\kappa}y}y^{s-1}\left(  \tfrac{1}{\left(  \left(  -iy\right)
^{s}-isy+s-1+\kappa^{2}\theta^{\left(  \kappa\right)  }\right)  ^{2}}%
-\tfrac{1}{\left(  s-1\right)  ^{2}}\right)  dy.
\]
Changing $z=\frac{x}{\kappa}y$ in $\tilde{Z}_{3}$ we show that%
\[
\tilde{Z}_{3}=\frac{o\left(  \kappa^{2+s}\right)  }{x^{s+1}}+\kappa
^{2+s}o\left(  \frac{1}{x^{s+1}}\right)  .
\]
Then,%
\begin{equation}
\left.
\begin{array}
[c]{c}%
I_{2}=\dfrac{s\left(  -i\right)  ^{s+1}e^{-i\frac{x}{\kappa}}}{\left(
s-1\right)  ^{2}\sqrt{2\pi}}\Gamma\left(  s\right)  \left(  \frac{\kappa
^{2+s}}{x^{s+1}}\right)  -\dfrac{i\kappa^{2}}{x\sqrt{2\pi}}\dfrac
{e^{-i\frac{x}{\kappa}}}{s-1+\kappa^{2}\theta^{\left(  \kappa\right)  }%
}+\tilde{Z}_{2}\\
+\frac{\kappa^{2+s}}{x^{s+1}}o_{\kappa}\left(  1\right)  +\frac{\kappa^{2+s}%
}{x^{s+1}}o_{\left\vert x\right\vert }\left(  1\right)  .
\end{array}
\right.  \label{tw144}%
\end{equation}
Using (\ref{tw142}) and (\ref{tw144}) in (\ref{tw145}) and noting that
$Z_{2}+\tilde{Z}_{2}=O\left(  \frac{\kappa^{3+s}}{x^{s+2}}\right)  ,$ we
obtain (\ref{as45}) for $x>0.$ The case $x<0$ is considered similarly.
\end{proof}

We now get a bound for a solution of (\ref{tw148}).\ Namely, we prove the following.

\begin{lemma}
\label{L11}Let $R_{N}\in H^{s/2}$ be a solution to (\ref{tw148}) with
$m_{N}\left(  x\right)  \in L^{\infty}$ satisfying the decay estimate%
\begin{equation}
\left\vert m_{N}\left(  x\right)  \right\vert \leq C\left(  e^{-\sqrt
{\lambda\left(  s\right)  }\left\vert x\right\vert }+\frac{N^{\frac{s\left(
2+s\right)  }{2-s}}}{1+\left\vert x\right\vert ^{s+1}}\right)  , \label{tw157}%
\end{equation}
for all $0<N\leq N_{0}$ and some $N_{0}>0.$ Then, there is $N_{1}>0,$ such
that for all $0<N\leq N_{1},$ the estimate
\begin{equation}
\left\vert R_{N}\left(  x\right)  \right\vert \leq C\left(  e^{-\sqrt
{\lambda\left(  s\right)  }\left\vert x\right\vert }+\frac{N^{\frac{s\left(
2+s\right)  }{2-s}}}{1+\left\vert x\right\vert ^{s+1}}\right)  ,\text{ }%
x\in\mathbb{R}\text{,} \label{tw167}%
\end{equation}
is true.
\end{lemma}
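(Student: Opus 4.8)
The plan is to exploit the convolution identity \eqref{tw148}, $R_{N}=m_{N}*\big(|R_{N}|^{2s}R_{N}\big)$, and run a bootstrap on the pointwise decay. Write $g_{N}:=|R_{N}|^{2s}R_{N}$ and $w_{N}(x):=e^{-\sqrt{\lambda(s)}|x|}+N^{\frac{s(2+s)}{2-s}}(1+|x|)^{-s-1}$, so that \eqref{tw157} reads $|m_{N}(x)|\le Cw_{N}(x)$ and the target \eqref{tw167} is $|R_{N}(x)|\le Cw_{N}(x)$. First I would record the elementary regularity. Since $s>1$ we have $H^{s/2}(\mathbb{R})\hookrightarrow L^{\infty}\cap C^{0}$, hence $R_{N}\in L^{2}\cap L^{\infty}\subset L^{p}$ for all $p\in[2,\infty]$ and $g_{N}\in L^{1}\cap L^{\infty}$; the bound \eqref{tw157} shows $m_{N}\in L^{1}\cap L^{2}$ (the tail $|x|^{-s-1}$ is integrable and square integrable because $s+1>1$), so by Young's inequality $R_{N}=m_{N}*g_{N}\in L^{1}\cap L^{\infty}$ and, being a convolution of two $L^{2}$ functions, $R_{N}\in C_{0}$, i.e. $R_{N}(x)\to 0$ as $|x|\to\infty$. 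In the setting where this lemma is applied (the proof of Theorem~\ref{Th4}) we have $R_{N}\in\mathbf{R}_{N}$, so Theorem~\ref{T2} is available: after a translation we may assume $R_{N}\to\mathcal{R}$ in $H^{r}$ for every $r\ge0$, whence $\|R_{N}\|_{H^{r}}\le C$ uniformly in $N$, and, since $\mathcal{R}$ decays exponentially, for every $\delta>0$ there are $L(\delta)$, $N_{1}(\delta)$ with $|R_{N}(x)|\le\delta$ for $|x|\ge L(\delta)$, $0<N\le N_{1}(\delta)$.

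Next I would establish two $N$-uniform \emph{reproduction estimates} for the kernel envelope. For the local piece, $\int_{|y|<L}|m_{N}(x-y)|\,|g_{N}(y)|\,dy\le\big(\sup_{|z|\ge|x|-L}|m_{N}(z)|\big)\,\|g_{N}\|_{L^{1}}\le C(L)\,w_{N}(x)$ for $|x|\ge2L$, using $\|g_{N}\|_{L^{1}}\le C$ and $e^{-\sqrt{\lambda(s)}(|x|-L)}=e^{\sqrt{\lambda(s)}L}e^{-\sqrt{\lambda(s)}|x|}$. For the tail, for every $0\le\mu<\sqrt{\lambda(s)}$ one has, splitting at $|y|=|x|/2$ and using $\int e^{-a|x-y|}e^{-b|y|}\,dy\sim e^{-\min(a,b)|x|}$ when $a\ne b$,
\[
\int_{\mathbb{R}}|m_{N}(x-y)|\,e^{-\mu|y|}\,dy\ \le\ C_{\mu}\Big(e^{-\mu|x|}+N^{\frac{s(2+s)}{2-s}}(1+|x|)^{-s-1}+N^{\frac{s(2+s)}{2-s}}e^{-\mu|x|/2}\Big),
\]
and more generally convolving $|m_{N}|$ against $e^{-\nu|\cdot|}$ with $\nu>\sqrt{\lambda(s)}$ reproduces $e^{-\sqrt{\lambda(s)}|\cdot|}$ plus the algebraic tail of $w_{N}$. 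The point of keeping $\mu$ (resp. $\nu$) away from $\sqrt{\lambda(s)}$ is that at the borderline $a=b=\sqrt{\lambda(s)}$ the self-convolution of the exponential envelope produces a spurious factor $|x|$; this is exactly where the superlinearity of the nonlinearity will save the day.

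The bootstrap then proceeds in two steps. \emph{(a) A crude exponential decay.} For $|x|\ge2L$ I would split \eqref{tw148} at $|y|=L$; on $|y|\ge L$ use $|g_{N}|\le\delta^{2s}|R_{N}|$ together with the $L^{2}$-tail smallness of $R_{N}$, on $|y|<L$ the local estimate above. Iterating this a finite number of times (each round upgrading the decay of the tail fed back through $g_{N}$), or equivalently a standard Combes--Thomas/truncation argument on the resolvent identity \eqref{tw148}, yields $|R_{N}(x)|\le C_{N}e^{-\mu_{0}|x|}$ for some $\mu_{0}\in(0,\sqrt{\lambda(s)}]$. \emph{(b) Upgrading the rate.} If $|R_{N}(x)|\le A\,e^{-\mu|x|}$ with $\mu<\sqrt{\lambda(s)}$, then $|g_{N}(x)|\le A^{2s+1}e^{-(2s+1)\mu|x|}$, and inserting this into \eqref{tw148}, using the reproduction estimate with exponent $\min\{\sqrt{\lambda(s)},(2s+1)\mu\}$ and the local piece, gives $|R_{N}(x)|\le C(A)\big(e^{-\min\{\sqrt{\lambda(s)},(2s+1)\mu\}|x|}+w_{N}(x)\big)$. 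Since $2s+1\ge3$, the rate strictly increases, $(2s+1)\mu>\mu$, so after finitely many steps $(2s+1)\mu>\sqrt{\lambda(s)}$; one last application, now with $\min\{\sqrt{\lambda(s)},(2s+1)\mu\}=\sqrt{\lambda(s)}$ and $(2s+1)\mu\ne\sqrt{\lambda(s)}$ so that no $|x|$-factor appears, and tracking through the algebraic contributions (the $|x|^{-s-1}$ part of $w_{N}$ is reproduced by the tail of $m_{N}$ convolved against the exponential part of $g_{N}$), produces $|R_{N}(x)|\le C\,w_{N}(x)$ for $|x|\ge2L$; on $|x|\le2L$ the estimate is trivial since $w_{N}(x)\ge e^{-2\sqrt{\lambda(s)}L}$ and $\|R_{N}\|_{L^{\infty}}\le C$. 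All constants trace back to $\|R_{N}\|_{H^{r}}$, $\|m_{N}\|_{L^{1}\cap L^{2}}$ and the constant in \eqref{tw157}, so the bound holds uniformly for $0<N\le N_{1}$, which is \eqref{tw167}.

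The main obstacle is that, because $|R_{N}|^{2s}R_{N}$ is superlinear, no one-shot contraction in a weighted $L^{\infty}$ space is available; one must bootstrap the exponential rate up to the critical value $\sqrt{\lambda(s)}$ — the degree $2s+1\ge3>1$ being precisely what makes each step strictly improve — while simultaneously controlling, uniformly in $N$, the competition between the exponential tail $e^{-\sqrt{\lambda(s)}|x|}$ (dominant for $|x|\lesssim\log(1/N)$) and the weak algebraic tail $N^{\frac{s(2+s)}{2-s}}|x|^{-s-1}$ (dominant for $|x|\gtrsim\log(1/N)$), and avoiding the spurious linear factor that self-convolution of the exponential envelope creates at the borderline rate.
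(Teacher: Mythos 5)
Your step (a) is the gap, and it is not a small one. The claim that a finite iteration of \eqref{tw148} (or a Combes--Thomas argument) yields a \emph{pure} exponential bound $|R_{N}(x)|\le C_{N}e^{-\mu_{0}|x|}$ is false: for each fixed $N>0$ the minimizer $R_{N}$ has a genuinely algebraic tail $\sim N^{\frac{s(2+s)}{2-s}}|x|^{-s-1}$ — this is precisely the content of Theorem~\ref{Th4} and of the conclusion \eqref{tw167} you are trying to prove — so no constant $C_N$ makes a pure exponential bound true, and nothing can deliver one. The underlying obstruction is that the symbol $n_{N}(\xi)+\theta_{N}$ has a branch point at $\xi=-\kappa^{-1}$ on the real axis (from $|\kappa\xi+1|^{s}$ with $1<s<2$), so $\widehat{m_{N}}$ does not extend holomorphically to a strip; this both forces the algebraic tail of $m_{N}$ and rules out the Agmon/Combes--Thomas conjugation you appeal to. With step (a) removed, your bootstrap in (b) has no valid entry point: starting from the best crude bound one can extract from \eqref{tw148} alone, namely $|R_{N}(x)|\lesssim (1+|x|)^{-(s+1)}$ with an $O(1)$ prefactor, each convolution step produces an $O(1)$-prefactor algebraic residual $(1+|x|)^{-(2s+1)^{k}(s+1)}$ which is \emph{not} dominated by $w_{N}(x)$ in the intermediate range $C(s)\lesssim |x|\lesssim N^{-\frac{2+s}{2(2-s)(s+1)}}$; absorbing it would require $k\gtrsim\log\log(1/N)$ iterations, and the constants $C^{(2s+1)^{k}}$ generated along the way then destroy the $N$-uniformity the lemma requires.

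The paper closes this gap by a mechanism you do not use. It first isolates the sharp algebraic decay with small prefactor by a dyadic iteration $M(a)\le A\big(\tfrac{C}{1+a^{s+1}}+\tfrac{1}{16}M(\tfrac a2)\big)$ (after Lemma~3.1 of \cite{gerard}), which reproduces the prefactor $A=N^{\frac{s(2+s)}{2-s}}$ because the output is \emph{proportional} to the hypothesis $|m_{N}|\le CA(1+|x|^{s+1})^{-1}$; crucially this argument only yields algebraic decay, never exponential. It then obtains the exponential part not from \eqref{tw148} but from the elliptic equation \eqref{tw115} rewritten as $(-\Delta)R_{N}+\theta_{N}R_{N}=|R_{N}|^{2s}R_{N}+\big((-\Delta)-n_{N}(D)\big)R_{N}$, tested against $\omega_{\varepsilon,\delta}\overline{P_{N}}$ with $\omega_{\varepsilon,\delta}(x)=e^{\delta|x|/(1+\varepsilon|x|)}$ as in Theorem~8.1.1 of \cite{Cazenave}, treating $((-\Delta)-n_{N}(D))R_{N}$ as a small perturbation via \eqref{tw61}; the resulting weighted $L^{2}$ bound \eqref{tw165} is then converted to a pointwise exponential via \eqref{tw148} and bootstrapped using $2s+1>2$, which is the one step your scheme shares with the paper. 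Without the weighted-energy estimate and the small-prefactor dyadic lemma, your proposal does not produce \eqref{tw167}.
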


\begin{proof}
Suppose that%
\[
\left\vert m_{N}\left(  x\right)  \right\vert \leq\frac{CA}{1+\left\vert
x\right\vert ^{s+1}},\text{ }C,A>0.
\]
Let us prove that
\begin{equation}
\left\vert R_{N}\left(  x\right)  \right\vert \leq\frac{CA}{1+\left\vert
x\right\vert ^{s+1}}. \label{tw150}%
\end{equation}
To show this inequality, we follow the proof of Lemma 3.1 of \cite{gerard}.
Since $R_{N}\in L^{2},$ there is $a_{0}>0$ such that
\[
\left(  \int_{\left\vert y\right\vert \geq a_{0}/2}R_{N}^{2}\left(  y\right)
dy\right)  ^{\frac{1}{2}}\leq\frac{1}{16}.
\]
For any $a>0$, we set%
\[
M\left(  a\right)  :=\mathop{\rm sup}_{\left\vert x\right\vert \geq
a}\left\vert R_{N}\left(  x\right)  \right\vert .
\]
Note that%
\[
\left.
\begin{array}
[c]{c}%
\left\vert \left(  m_{N}\ast\left(  \left\vert R_{N}\right\vert ^{2s}%
R_{N}\right)  \right)  \left(  x\right)  \right\vert \leq\left\vert
\int_{\left\vert y\right\vert \leq\frac{a}{2}}m_{N}\left(  x-y\right)  \left(
\left\vert R_{N}\right\vert ^{2s}R_{N}\right)  \left(  y\right)  dy\right\vert
\\
+\left\vert \int_{\left\vert y\right\vert \geq\frac{a}{2}}m_{N}\left(
x-y\right)  \left(  \left\vert R_{N}\right\vert ^{2s}R_{N}\right)  \left(
y\right)  dy\right\vert \\
\leq A\left(  \dfrac{C}{1+a^{s+1}}+\frac{1}{16}M\left(  \dfrac{a}{2}\right)
\right)  ,
\end{array}
\right.
\]
for all $\left\vert x\right\vert \geq a,$ and $a\geq a_{0}.$ Using equation
(\ref{tw148}), from the last relation we deduce%
\[
M\left(  a\right)  \leq A\left(  \dfrac{C}{1+a^{s+1}}+\frac{1}{16}M\left(
\dfrac{a}{2}\right)  \right)  ,
\]
for all $a\geq a_{0}.$ Putting $a=2^{n},$ $n\geq n_{0}$, in last relation we
obtain%
\[
M\left(  2^{n}\right)  \leq A\left(  \dfrac{C}{1+\left(  2^{n}\right)  ^{s+1}%
}+\frac{1}{16}M\left(  2^{n-1}\right)  \right)  .
\]
Iterating the above inequality we see that
\begin{equation}
\left.
\begin{array}
[c]{c}%
M\left(  2^{n}\right)  \leq A\left(  \dfrac{C}{1+\left(  2^{n}\right)  ^{s+1}%
}\sum_{j=0}^{n-n_{0}}2^{-j}+\frac{1}{\left(  16\right)  ^{n-n_{0}+1}}M\left(
2^{n_{0}-1}\right)  \right) \\
\leq\dfrac{CA}{1+\left(  2^{n}\right)  ^{s+1}}.
\end{array}
\right.  \label{tw151}%
\end{equation}
Since $R_{N}\in L^{\infty},$ to prove (\ref{tw150}) we can assume that
$\left\vert x\right\vert $ is big enough. For instance, $\left\vert
x\right\vert \geq2^{n_{0}}.$ Then, as $\left\vert x\right\vert \in\lbrack
2^{n},2^{n+1}],$ for some $n\geq n_{0},$ we deduce (\ref{tw150}) from
(\ref{tw151}).

Suppose first that $N^{\frac{s\left(  2+s\right)  }{2-s}}\geq e^{-\sqrt
{\lambda\left(  s\right)  }\left\vert x\right\vert }\left(  1+\left\vert
x\right\vert ^{s+1}\right)  .$ Then, from (\ref{tw157}) it follows that%
\[
\left\vert m_{N}\left(  x\right)  \right\vert \leq C\frac{N^{\frac{s\left(
2+s\right)  }{2-s}}}{1+\left\vert x\right\vert ^{s+1}}.
\]
Applying (\ref{tw150}) with $A=N^{\frac{s\left(  2+s\right)  }{2-s}}$ we get%
\begin{equation}
\left\vert R_{N}\left(  x\right)  \right\vert \leq\frac{CN^{\frac{s\left(
2+s\right)  }{2-s}}}{1+\left\vert x\right\vert ^{s+1}}. \label{tw158}%
\end{equation}
Suppose now that
\begin{equation}
N^{\frac{s\left(  2+s\right)  }{2-s}}\leq e^{-\sqrt{\lambda\left(  s\right)
}\left\vert x\right\vert }\left(  1+\left\vert x\right\vert ^{s+1}\right)  .
\label{tw163}%
\end{equation}
Let us prove that%
\begin{equation}
\left\vert R_{N}\left(  x\right)  \right\vert \leq Ce^{-\sqrt{\lambda\left(
s\right)  }\left\vert x\right\vert }. \label{tw152}%
\end{equation}
Since
\[
\left\vert m_{N}\left(  x\right)  \right\vert \leq\frac{C}{1+\left\vert
x\right\vert ^{s+1}},
\]
it follows from (\ref{tw150}) that
\begin{equation}
\left\vert R_{N}\left(  x\right)  \right\vert \leq\frac{C}{1+\left\vert
x\right\vert ^{s+1}}. \label{tw159}%
\end{equation}
To prove exponential decay we turn to the equation (\ref{tw115}), which we
write as
\begin{equation}
\left(  -\Delta\right)  R_{N}+\theta_{N}R_{N}=\left\vert R_{N}\right\vert
^{2s}R_{N}+\left(  \left(  -\Delta\right)  R_{N}-n_{N}\left(  D\right)
\right)  R_{N}. \label{tw154}%
\end{equation}
Putting $R_{N}\left(  x\right)  =P_{N}\left(  \sqrt{\theta_{N}}x\right)  $ in
(\ref{tw154}) we have%
\[
\left(  -\Delta\right)  P_{N}+P_{N}=\frac{1}{\theta_{N}}\left\vert
P_{N}\right\vert ^{2s}P_{N}+\left(  \left(  -\Delta\right)  R_{N}-n_{N}\left(
D\right)  \right)  R_{N}%
\]
Following the proof of Theorem 8.1.1 of \cite{Cazenave}, we introduce the
function $\omega_{\varepsilon,\delta}\left(  x\right)  =e^{\frac
{\delta\left\vert x\right\vert }{1+\varepsilon\left\vert x\right\vert }},$ for
$\varepsilon,\delta>0.$ This function is bounded, Lipschitz continuous, and
$\left\vert \nabla\omega_{\varepsilon}\right\vert \leq\omega_{\varepsilon
,\delta}$ a.e.. Taking the scalar product of (\ref{tw154}) with $\omega
_{\varepsilon,\delta}P_{N}\in H^{s/2},$ we get%
\begin{equation}
\left.
\begin{array}
[c]{c}%
\operatorname{Re}\int\nabla P_{N}\nabla\left(  \omega_{\varepsilon,\delta
}\overline{P_{N}}\right)  +\int\omega_{\varepsilon,\delta}\left\vert
P_{N}\right\vert ^{2}\\
=\frac{1}{\theta_{N}}\int\omega_{\varepsilon,\delta}\left\vert P_{N}%
\right\vert ^{2s+2}+\left\vert \int\left(  \left(  \left(  -\Delta\right)
-n_{N}\left(  D\right)  \right)  R_{N}\right)  \left(  \omega_{\varepsilon
,\delta}\overline{R_{N}}\right)  \right\vert .
\end{array}
\right.  \label{tw156}%
\end{equation}
Since $\nabla\left(  \omega_{\varepsilon,\delta}\overline{P_{N}}\right)
=\left(  \nabla\omega_{\varepsilon,\delta}\right)  \overline{P_{N}}%
+\omega_{\varepsilon,\delta}\nabla\overline{P_{N}},$%
\[
\left.
\begin{array}
[c]{c}%
\operatorname{Re}\int\nabla P_{N}\nabla\left(  \omega_{\varepsilon,\delta
}\overline{P_{N}}\right)  \geq\int\omega_{\varepsilon,\delta}\left\vert \nabla
P_{N}\right\vert ^{2}-\int\omega_{\varepsilon,\delta}\left\vert P_{N}%
\right\vert \left\vert \nabla P_{N}\right\vert \\
\geq\int\omega_{\varepsilon,\delta}\left\vert \nabla P_{N}\right\vert
^{2}-\frac{1}{2}\int\omega_{\varepsilon,\delta}\left\vert P_{N}\right\vert
^{2}-\frac{1}{2}\int\omega_{\varepsilon,\delta}\left\vert \nabla
P_{N}\right\vert ^{2},
\end{array}
\right.
\]
and then from (\ref{tw156}) it follows%
\begin{equation}
\left.
\begin{array}
[c]{c}%
\int\omega_{\varepsilon,\delta}\left\vert P_{N}\right\vert ^{2}\leq\frac
{2}{\theta_{N}}\int\omega_{\varepsilon,\delta}\left\vert P_{N}\right\vert
^{2s+2}\\
+2\int\left(  \left(  \left(  -\Delta\right)  -n_{N}\left(  D\right)  \right)
R_{N}\right)  \left(  \omega_{\varepsilon,\delta}\overline{R_{N}}\right)  .
\end{array}
\right.  \label{tw161}%
\end{equation}
Using (\ref{tw159}) we have%
\[
\left.
\begin{array}
[c]{c}%
\frac{2}{\theta_{N}}\int\omega_{\varepsilon,\delta}\left\vert P_{N}\right\vert
^{2s+2}\leq\frac{2}{\theta_{N}}\int_{\left\vert y\right\vert \leq R}%
\omega_{\varepsilon,\delta}\left\vert P_{N}\left(  y\right)  \right\vert
^{2s+2}dy\\
+\frac{2C}{\theta_{N}\left(  1+R^{s+1}\right)  ^{2s}}\int_{\left\vert
y\right\vert \geq R}\omega_{\varepsilon,\delta}\left\vert P_{N}\left(
y\right)  \right\vert ^{2}dy\\
\leq\frac{2}{\theta_{N}}\int_{\left\vert y\right\vert \leq R}e^{\delta
\left\vert y\right\vert }\left\vert P_{N}\left(  y\right)  \right\vert
^{2s+2}dy+\frac{1}{2}\int\omega_{\varepsilon,\delta}\left\vert P_{N}%
\right\vert ^{2},
\end{array}
\right.
\]
for some $R>0$ big enough. Using the last relation in (\ref{tw161}) we get%
\begin{equation}
\left.
\begin{array}
[c]{c}%
\int\omega_{\varepsilon,\delta}\left(  \sqrt{\theta_{N}}y\right)  \left\vert
R_{N}\left(  y\right)  \right\vert ^{2}dy\\
\leq\frac{4}{\left(  \theta_{N}\right)  ^{3/2}}\int_{\left\vert y\right\vert
\leq R}e^{\delta\left\vert y\right\vert }\left\vert P_{N}\left(  y\right)
\right\vert ^{2s+2}dy\\
+\frac{4}{\sqrt{\theta_{N}}}\left\vert \int\left(  \left(  \left(
-\Delta\right)  -n_{N}\left(  D\right)  \right)  R_{N}\right)  \left(
\omega_{\varepsilon,\delta}\overline{R_{N}}\right)  \right\vert .
\end{array}
\right.  \label{tw162}%
\end{equation}
We estimate the second term in the right-hand side of (\ref{tw162}) by%
\begin{equation}
\left.
\begin{array}
[c]{c}%
\left\vert \int\left(  \left(  \left(  -\Delta\right)  -n_{N}\left(  D\right)
\right)  R_{N}\right)  \left(  \omega_{\varepsilon,\delta}\overline{R_{N}%
}\right)  \right\vert \\
\leq\int\left\vert \left\langle y\right\rangle \tfrac{\omega_{\varepsilon
,\delta}}{\sqrt{\omega_{\varepsilon,\delta}\left(  \sqrt{\theta_{N}}y\right)
}}\left(  \left(  \left(  -\Delta\right)  -n_{N}\left(  D\right)  \right)
R_{N}\right)  \right\vert \left(  \left\langle y\right\rangle ^{-1}%
\sqrt{\omega_{\varepsilon,\delta}\left(  \sqrt{\theta_{N}}y\right)
}\left\vert R_{N}\right\vert \right)  dy.
\end{array}
\right.  \label{tw164}%
\end{equation}
Using (\ref{tw163}) we have $\left\langle y\right\rangle \tfrac{\omega
_{\varepsilon,\delta}}{\sqrt{\omega_{\varepsilon,\delta}\left(  \sqrt
{\theta_{N}}y\right)  }}\leq N^{-k\delta},$ for some $k>0.$ Then, if
$\delta>0$ is small enough, by using (\ref{tw61}), via Sobolev's theorem we
obtain%
\[
\left.
\begin{array}
[c]{c}%
\left\vert \left\langle y\right\rangle \tfrac{\omega_{\varepsilon,\delta}%
}{\sqrt{\omega_{\varepsilon,\delta}\left(  \sqrt{\theta_{N}}y\right)  }%
}\left(  \left(  \left(  -\Delta\right)  -n_{N}\left(  D\right)  \right)
R_{N}\right)  \right\vert \\
\leq CN^{-k\delta}\left\Vert \left(  \left(  \left(  -\Delta\right)
-n_{N}\left(  D\right)  \right)  R_{N}\right)  \right\Vert _{H^{1}}\leq
CN^{\delta_{1}}\left\Vert R_{N}\right\Vert _{H^{n}},
\end{array}
\right.
\]
for some $\delta_{1}>0$ and $n>1.$ Then, there is $N_{0}>0,$ such that%
\[
\mathop{\rm sup}_{y\in\mathbb{R}}\left\vert \left\langle y\right\rangle
\tfrac{\omega_{\varepsilon,\delta}}{\sqrt{\omega_{\varepsilon,\delta}\left(
\sqrt{\theta_{N}}y\right)  }}\left(  \left(  \left(  -\Delta\right)
-n_{N}\left(  D\right)  \right)  R_{N}\right)  \right\vert \leq\frac
{\sqrt{\theta_{N}}}{8}.
\]
Using this inequality in (\ref{tw164}) we get%
\[
\left\vert \int\left(  \left(  \left(  -\Delta\right)  -n_{N}\left(  D\right)
\right)  R_{N}\right)  \left(  \omega_{\varepsilon,\delta}\overline{R_{N}%
}\right)  \right\vert \leq\frac{\sqrt{\theta_{N}}}{4}+\frac{\sqrt{\theta_{N}}%
}{8}\int\omega_{\varepsilon,\delta}\left(  \sqrt{\theta_{N}}y\right)
\left\vert R_{N}\left(  y\right)  \right\vert ^{2}dy.
\]
Introducing the last inequality into (\ref{tw162}) we arrive to%
\[
\int\omega_{\varepsilon,\delta}\left(  \sqrt{\theta_{N}}y\right)  \left\vert
R_{N}\left(  y\right)  \right\vert ^{2}dy\leq\frac{8}{\left(  \theta
_{N}\right)  ^{3/2}}\int_{\left\vert y\right\vert \leq R}e^{\delta\left\vert
x\right\vert }\left\vert P_{N}\left(  y\right)  \right\vert ^{2s+2}dy+2.
\]
Taking the limit as $\varepsilon\rightarrow0,$ we obtain%
\begin{equation}
\int e^{\sqrt{\theta_{N}}\delta\left\vert x\right\vert }\left\vert
R_{N}\left(  y\right)  \right\vert ^{2}dy\leq C. \label{tw165}%
\end{equation}
Note that by (\ref{tw163}) $\left\vert m_{N}\left(  x\right)  \right\vert \leq
Ce^{-\frac{2\lambda\left(  s\right)  }{s\left(  s-1\right)  }\left\vert
x\right\vert }.$ Then, it follows from (\ref{tw148}) that
\begin{equation}
\left\vert R_{N}\left(  x\right)  \right\vert \leq C\int e^{-\sqrt
{\lambda\left(  s\right)  }\left\vert x-y\right\vert }\left(  \left\vert
R_{N}\right\vert ^{2s}R_{N}\right)  \left(  y\right)  dy. \label{tw166}%
\end{equation}
Since $2s+1>2,$ from (\ref{tw165}) and (\ref{tw166}) we deduce (\ref{tw152}).
Finally, summing up (\ref{tw158}) and (\ref{tw152}) we attain (\ref{tw167}).
\end{proof}

We have now all ingredients that we need to prove Theorem \ref{Th4}.

\begin{proof}
[Proof of Theorem \ref{Th4}]We consider again equation (\ref{tw148}). It
follows from Theorem \ref{Th2} that there exist $\tilde{x},\tilde{\gamma}%
\in\mathbb{R}$, $\tilde{\gamma}=\tilde{\gamma}\left(  N\right)  $ and
$\tilde{x}=\tilde{x}\left(  N\right)  ,$ such that
\[
\mathop{\rm lim}_{N\rightarrow0}\left\Vert e^{i\tilde{\gamma}}R_{N}\left(
\cdot+\tilde{x}\right)  -\mathcal{R}\right\Vert _{H^{1}}=0.
\]
Let $\tilde{R}_{N}\left(  x\right)  =e^{i\tilde{\gamma}}R_{N}\left(
x+\tilde{x}\right)  .$ Note that $\tilde{R}_{N}$ solves (\ref{tw148}) and
tends to $\mathcal{R}$, as $N\rightarrow0.$ We write%
\begin{equation}
\left.
\begin{array}
[c]{c}%
\tilde{R}_{N}\left(  x\right)  =\mathcal{C}_{1}\int_{\left\vert y\right\vert
\leq\frac{\left\vert x\right\vert }{2}}e^{-\sqrt{\lambda\left(  s\right)
}\left\vert x-y\right\vert }\left(  \left\vert \mathcal{R}\right\vert
^{2s}\mathcal{R}\right)  \left(  y\right)  dy\\
+\frac{\mathcal{C}_{2}N^{\frac{s\left(  2+s\right)  }{2-s}}}{\left\vert
x\right\vert ^{s+1}}\int\left\vert \mathcal{R}\right\vert ^{2s}\mathcal{R}%
+\sum_{j=1}^{5}\rho_{j},
\end{array}
\right.  \label{tw170}%
\end{equation}
where%
\[
\rho_{1}:=\frac{\mathcal{C}_{2}N^{\frac{s\left(  2+s\right)  }{2-s}}%
}{\left\vert x\right\vert ^{s+1}}\int_{\left\vert y\right\vert \geq
\frac{\left\vert x\right\vert }{2}}\left(  \left\vert \mathcal{R}\right\vert
^{2s}\mathcal{R}\right)  \left(  y\right)  dy
\]%
\[
\rho_{2}:=\mathcal{C}_{2}N^{\frac{s\left(  2+s\right)  }{2-s}}\int_{\left\vert
y\right\vert \leq\frac{\left\vert x\right\vert }{2}}\left(  \frac
{1}{\left\vert x-y\right\vert ^{s+1}}-\frac{1}{\left\vert x\right\vert ^{s+1}%
}\right)  \left(  \left\vert \mathcal{R}\right\vert ^{2s}\mathcal{R}\right)
\left(  y\right)  dy
\]%
\[
\rho_{3}:=\int_{\left\vert y\right\vert \leq\frac{\left\vert x\right\vert }%
{2}}\left(  m_{N}\left(  x-y\right)  -\mathcal{C}_{1}e^{-\sqrt{\lambda\left(
s\right)  }\left\vert x-y\right\vert }-\mathcal{C}_{2}\frac{N^{\frac{s\left(
2+s\right)  }{2-s}}}{\left\vert x-y\right\vert ^{s+1}}\right)  \left(
\left\vert \mathcal{R}\right\vert ^{2s}\mathcal{R}\right)  \left(  y\right)
dy,
\]%
\[
\rho_{4}:=\int_{\left\vert y\right\vert \leq\frac{\left\vert x\right\vert }%
{2}}m_{N}\left(  x-y\right)  \left(  \left(  \left\vert \tilde{R}%
_{N}\right\vert ^{2s}\tilde{R}_{N}\right)  \left(  y\right)  -\left(
\left\vert \mathcal{R}\right\vert ^{2s}\mathcal{R}\right)  \left(  y\right)
\right)  dy
\]
and%
\[
\rho_{5}:=\int_{\left\vert y\right\vert \geq\frac{\left\vert x\right\vert }%
{2}}m_{N}\left(  x-y\right)  \left(  \left\vert \tilde{R}_{N}\right\vert
^{2s}\tilde{R}_{N}\right)  \left(  y\right)  dy.
\]
We observe that $\mathcal{R}$ satisfies the estimate%
\begin{equation}
\left\vert \mathcal{R}\left(  x\right)  \right\vert \leq Ce^{-\sqrt
{\lambda\left(  s\right)  }\left\vert x\right\vert },\text{ \ }x\in
\mathbb{R}\text{.} \label{tw169}%
\end{equation}
Then, as $s>1,$ $\rho_{1}=N^{\frac{s\left(  2+s\right)  }{2-s}}e^{-\sqrt
{\lambda\left(  s\right)  }\left\vert x\right\vert }o_{\left\vert x\right\vert
}\left(  1\right)  ,$ where $o_{\left\vert x\right\vert }\left(  1\right)
\rightarrow0,$ as $\left\vert x\right\vert \rightarrow\infty.$ Taking into
account the relation%
\[
\left.
\begin{array}
[c]{c}%
\left\vert \int_{\left\vert y\right\vert \leq\frac{\left\vert x\right\vert
}{2}}\left(  \frac{1}{\left\vert x-y\right\vert ^{s+1}}-\frac{1}{\left\vert
x\right\vert ^{s+1}}\right)  \left(  \left\vert \mathcal{R}\right\vert
^{2s}\mathcal{R}\right)  \left(  y\right)  dy\right\vert \\
\leq\frac{C}{\left\vert x\right\vert ^{s+2}}\int_{\left\vert y\right\vert
\leq\frac{\left\vert x\right\vert }{2}}\left\vert y\right\vert \left\vert
\left\vert \mathcal{R}\right\vert ^{2s}\mathcal{R}\right\vert \left(
y\right)  dy,
\end{array}
\right.
\]
we show that $\rho_{2}=\frac{N^{\frac{s\left(  2+s\right)  }{2-s}}}{\left\vert
x\right\vert ^{s+1}}o_{\left\vert x\right\vert }\left(  1\right)  .$ From
Lemma \ref{L10} it follows that
\[
\rho_{3}=o_{N}\left(  1\right)  e^{-\sqrt{\lambda\left(  s\right)  }\left\vert
x\right\vert }+\frac{N^{\frac{s\left(  2+s\right)  }{2-s}}o_{N}\left(
1\right)  }{\left\vert x\right\vert ^{s+1}}+\frac{N^{\frac{s\left(
2+s\right)  }{2-s}}}{\left\vert x\right\vert ^{s+1}}o\left(  1\right)  ,
\]
with $o_{N}\left(  1\right)  \rightarrow0,$ as $N\rightarrow0.$ Since
$m_{N}\in H^{\frac{1}{2}+\varepsilon},$ for $\varepsilon>0,$ in particular
$m_{N}\in L^{\infty}.$ Then, using Lemma \ref{L10} we show that $m_{N}$
satisfies estimate (\ref{tw157}). Hence, since $\tilde{R}_{N}\rightarrow
\mathcal{R}$, as $N\rightarrow0,$ we estimate
\[
\rho_{4}=o_{N}\left(  1\right)  \left(  e^{-\sqrt{\lambda\left(  s\right)
}\left\vert x\right\vert }+\frac{1}{\left\vert x\right\vert ^{s+1}}\right)  .
\]
Finally, as $s>1,$ by using Lemma \ref{L11} we deduce that
\[
\rho_{5}=e^{-\sqrt{\lambda\left(  s\right)  }\left\vert x\right\vert
}o_{\left\vert x\right\vert }\left(  1\right)  +\frac{N^{\frac{s\left(
2+s\right)  }{2-s}}}{1+\left\vert x\right\vert ^{s+1}}o_{\left\vert
x\right\vert }\left(  1\right)  o_{N}\left(  1\right)  .
\]
\ Therefore, summing up the above estimates we prove
\[
\sum_{j=1}^{5}\rho_{j}=\left(  e^{-\sqrt{\lambda\left(  s\right)  }\left\vert
x\right\vert }+\frac{N^{\frac{s\left(  2+s\right)  }{2-s}}}{\left\vert
x\right\vert ^{s+1}}\right)  \left(  o_{\left\vert x\right\vert }\left(
1\right)  +o_{N}\left(  1\right)  \right)  .
\]
As $\mathcal{R}$ is radially symmetric we have%
\begin{align*}
&  \mathcal{C}_{1}\int_{\left\vert y\right\vert \leq\frac{\left\vert
x\right\vert }{2}}e^{-\sqrt{\lambda\left(  s\right)  }\left\vert
x-y\right\vert }\left(  \left\vert \mathcal{R}\right\vert ^{2s}\mathcal{R}%
\right)  \left(  y\right)  dy\\
&  =\mathcal{C}_{1}e^{-\sqrt{\lambda\left(  s\right)  }\left\vert x\right\vert
}\int_{\left\vert y\right\vert \leq\frac{\left\vert x\right\vert }{2}}%
e^{\sqrt{\lambda\left(  s\right)  }y}\left(  \left\vert \mathcal{R}\right\vert
^{2s}\mathcal{R}\right)  \left(  y\right)  dy.
\end{align*}
Using this equality in (\ref{tw170}) we obtain%
\begin{align*}
\tilde{R}_{N}\left(  x\right)   &  =\mathcal{C}_{1}e^{-\sqrt{\lambda\left(
s\right)  }\left\vert x\right\vert }\int e^{\sqrt{\lambda\left(  s\right)  }%
y}\left(  \left\vert \mathcal{R}\right\vert ^{2s}\mathcal{R}\right)  \left(
y\right)  dy\\
&  +\frac{\mathcal{C}_{2}N^{\frac{s\left(  2+s\right)  }{2-s}}}{\left\vert
x\right\vert ^{s+1}}\int\left\vert \mathcal{R}\right\vert ^{2s}\mathcal{R}%
+\sum_{j=1}^{6}\rho_{j},
\end{align*}
with%
\[
\rho_{6}:=-\mathcal{C}_{1}e^{-\sqrt{\lambda\left(  s\right)  }\left\vert
x\right\vert }\int_{\left\vert y\right\vert \geq\frac{\left\vert x\right\vert
}{2}}e^{\sqrt{\lambda\left(  s\right)  }y}\left(  \left\vert \mathcal{R}%
\right\vert ^{2s}\mathcal{R}\right)  \left(  y\right)  dy
\]
By using (\ref{tw169}) we estimate $\rho_{6}=e^{-\sqrt{\lambda\left(
s\right)  }\left\vert x\right\vert }o_{\left\vert x\right\vert }\left(
1\right)  .$ Theorem \ref{Th4} is proved.
\end{proof}

\end{document}